\newcolumntype{M}[1]{>{\centering\arraybackslash}m{#1}}
\theoremstyle{plain}
\newtheorem{theorem}{Theorem}[section]
\newtheorem{proposition}[theorem]{Proposition}
\newtheorem{lemma}[theorem]{Lemma}
\newtheorem*{theorem*}{Theorem}
\providecommand{\customgenericname}{}
\newcommand{\newcustomtheorem}[2]{%
  \newenvironment{#1}[1]
  {%
   \renewcommand\customgenericname{#2}%
   \renewcommand\theinnercustomgeneric{##1}%
   \innercustomgeneric
  }
  {\endinnercustomgeneric}
}
\newcommand{\RomanNumeralCaps}[1]
    {\MakeUppercase{\romannumeral #1}}
\newcommand{\cotgh}{\mathrm{cotgh}}
\theoremstyle{definition}
\newtheorem{definition}[theorem]{Definition}
\theoremstyle{remark}
\newtheorem{remark}[theorem]{Remark}
\newtheorem*{notations*}{Notations}
\title{\textbf{On constant higher order mean curvature hypersurfaces in \texorpdfstring{$\mathbb H^{\MakeLowercase{n}} \times \mathbb R$}{HnxR}}}
\author{Barbara Nelli}
\address{Universit\`a degli Studi dell'Aquila \\
Dipartimento di Ingegneria e Scienze dell'Informazione e Matematica\\
via Vetoio 1 \\
67100 L'Aquila\\
Italy}
\email{barbara.nelli@univaq.it}
\author{Giuseppe Pipoli}
\address{Universit\`a degli Studi dell'Aquila \\
Dipartimento di Ingegneria e Scienze dell'Informazione e Matematica\\
via Vetoio 1 \\
67100 L'Aquila\\
Italy}
\email{giuseppe.pipoli@univaq.it}
\author{Giovanni Russo}
\address{Florida International University \\
Department of Mathematics and Statistical Sciences \\
11200 SW 8th Street \\
FL33199, Miami \\
United States}
\email{grusso@fiu.edu}
\begin{document}

\let\thefootnote\relax\footnote{2010 Mathematics Subject Classification: 53C42, 53A10. \\
\indent Keywords: higher order mean curvature, Alexandrov reflection technique, hyperbolic space.}

\begin{abstract}
We classify hypersurfaces with rotational symmetry and positive constant $r$-th mean curvature in $\mathbb H^n \times \mathbb R$.
Specific constant higher order mean curvature hypersurfaces invariant under hyperbolic translation are also treated.
Some of these invariant hypersurfaces are employed as barriers to prove a Ros--Rosenberg type theorem in $\mathbb H^n \times \mathbb R$:
we show that compact connected hypersurfaces of constant $r$-th mean curvature embedded in $\mathbb H^n \times [0,\infty)$ with boundary in the slice $\mathbb H^n \times \{0\}$ are topological disks under suitable assumptions.

\end{abstract}

\maketitle

\begin{center}
\emph{Dedicated to Joel Spruck}
\end{center}

\tableofcontents

\section*{Introduction}
\label{intro}
Let $M$ be a hypersurface in an $(n+1)$-dimensional Riemannian manifold and denote by $k_1,\dots,k_n$ its principal curvatures. 
The \emph{$r$-th mean curvature} of $M$ is the elementary symmetric polynomial $H_r$ in the variables $k_i$ defined as
$$\binom{n}{r}H_r \coloneqq \sum_{i_1 < \dots < i_r} k_{i_1}k_{i_2}\cdots k_{i_r}.$$
We say that $M$ is an \emph{$H_r$-hypersurface} when $H_r$ is a positive constant for some $r \in \{1,\dots,n\}$.
Note in particular that $H_1$ is the mean curvature of $M$.
In his pioneering work \cite{reilly}, Reilly showed that $H_r$-hypersurfaces in space forms appear as solutions of a variational problem, 
thus extending the corresponding property of constant mean curvature surfaces. 
Earlier, Alexandrov had dealt with higher mean curvature functions in a series of papers \cite{alexandrov}, 
and later on many existence and classification results were achieved in space forms. A list of  contributions  to this subject (far from exhaustive) is \cite{alias, barbosa, caffarelli, hounie, hsiang, leite, mori, nelli-rosenberg, nelli-semmler, nelli-zhu, oscar, ros, rosenberg, rosenberg-spruck}. 

Studies on $H_r$-hypersurfaces in more general ambient manifolds appeared in the literature more recently, see for example \cite{cheng-rosenberg, elbert-nelli2, elbert-nelli1, elbert-nelli-santos}.
Most notable for us are the results of Elbert and Sa Earp \cite{elb-earp} on $H_r$-hypersurfaces in $\mathbb H^n\times \mathbb R$, where $\mathbb H^n$ is the hyperbolic space and  de Lima--Manfio--dos Santos \cite{lima} on $H_r$-hypersurfaces in $N\times \mathbb R$, where $N$ is a Riemannian manifold.

The goal of this paper is two-fold. 
Our first result is a complete classification of rotationally invariant $H_r$-hypersurfaces in $\mathbb H^n \times \mathbb R$.  
Note that $\mathbb H^n \times \mathbb R$ has non-constant sectional curvature, but it is symmetric enough to allow a fruitful investigation of invariant hypersurfaces.
The mean curvature case $r=1$ has already been studied by Hsiang--Hsiang in \cite{hsiang} 
and B\'erard and Sa Earp \cite{berard-earp}. 
A general study of $H_r$-hypersurfaces invariant by an ambient isometry in $N \times \mathbb R$, with $N$ a Riemannian manifold, has been carried out by de Lima--Manfio--dos Santos \cite{lima}. 
We point out that part of our classification results are included in \cite{lima}, but our description and focus are different in nature for several reasons.
First, we use a parametrization that allows us to consider and analyze hypersurfaces with singularities. In fact, we get $13$ different qualitative behaviors for rotational $H_r$-hypersurfaces in $\mathbb H^n \times \mathbb R$.  
Moreover, we always include the case $n=r$, which often produces exceptional examples.
Finally, we provide detailed topological and geometric descriptions for all values of the parameters involved.

The geometry of $H_r$-hypersurfaces with $r \geq 2$ is substantially different than that of constant mean curvature hypersurfaces.
This is mainly due to the full non-linearity of the relation among the principal curvatures, in contrast with the quasi-linearity of the mean curvature equation. Most importantly, many singular cases arise and need to be classified. 
For instance, one gets conical singularities, which are not allowed in the constant mean curvature case. 
Our classification results are summarized in Tables \ref{table:1}--\ref{table:3}.

We recall that   $H_r$-hypersurfaces invariant by rotations in space forms were studied by Leite and Mori \cite{leite, mori} for the case $r=2$, and Palmas \cite{oscar} for any $r$.

Our second goal is to understand the topology of embedded $H_r$-hypersurfaces in $\mathbb H^n \times [0,\infty)$ with boundary in the horizontal slice $\mathbb H^n \times \{0\}$. We prove the following Ros--Rosenberg type theorem.
 
\begin{theorem*}
Let $M$ be a compact connected hypersurface in $\mathbb H^n \times [0,\infty)$ with constant $H_r > (n-r)/n$ and boundary in the slice $ \mathbb H^n \times \{0\}$. 
When the boundary is sufficiently small and horoconvex, then $M$ is a topological disk.
\end{theorem*}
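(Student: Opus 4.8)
The plan is to show that $M$ is a vertical graph over the horoconvex domain it bounds in the slice, from which the disk conclusion is immediate. Let $\Omega\subset\mathbb H^n$ be the horoconvex domain with $\partial\Omega=\partial M$, viewed inside $\mathbb H^n\times\{0\}$. Two ingredients drive the argument: the tangency principle (geometric maximum principle) for the fully nonlinear operator $H_r$, and the Alexandrov reflection technique with respect to the horizontal slices $\mathbb H^n\times\{t\}$, whose reflections $z\mapsto 2t-z$ are ambient isometries. The threshold $(n-r)/n$ enters because it is exactly the $r$-th mean curvature of a vertical horocylinder $H\times\mathbb R$: such a hypersurface has principal curvatures $1$ with multiplicity $n-1$ (coming from the horosphere $H\subset\mathbb H^n$) and $0$ in the $\mathbb R$-direction, so $\binom{n}{r}H_r=\binom{n-1}{r}$, that is $H_r=(n-r)/n$. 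Thus the hypothesis $H_r(M)>(n-r)/n$ says precisely that $M$ is more curved than a horocylinder, which is what makes horocylinders available as barriers.

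First I would establish horizontal confinement. Since $\Omega$ is horoconvex it is the intersection of the horoballs containing it; for each bounding horosphere $H$, the parabolic one-parameter group fixing the same point at infinity sweeps out a foliation of $\mathbb H^n\times\mathbb R$ by horocylinders $H(s)\times\mathbb R$. Starting from a leaf disjoint from the compact surface $M$ and sliding it toward $\Omega$, a first interior contact would place $M$ on the convex side of a horocylinder and tangent to it; comparing $r$-th mean curvatures there and using $H_r(M)>(n-r)/n=H_r(H(s)\times\mathbb R)$, the tangency principle rules this out. Running this for every bounding horosphere confines $M$ to the solid cylinder $\overline{\Omega}\times\mathbb R$. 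The smallness of $\partial M$ then lets me cap $M$ from above by a rotationally invariant $H_r$-sphere from the earlier classification, which exists in this regime, bounding the height and showing that $M$ is a controlled cap contained in $\overline{\Omega}\times[0,h]$.

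With $M$ so positioned, I would run Alexandrov reflection in the horizontal slices. Reflecting the cap $M\cap\{z\ge t\}$ across $\mathbb H^n\times\{t\}$ and decreasing $t$ from the maximal height, the reflected cap stays below $M$ for $t$ near the top. At a first interior tangency, at some level $t_0>0$, the tangency principle would force $M$ to be symmetric about $\mathbb H^n\times\{t_0\}$, hence to possess boundary points at height $2t_0>0$, contradicting $\partial M\subset\{z=0\}$. Therefore no interior tangency occurs for $t_0>0$, the reflection proceeds all the way to the boundary slice, and this forces every vertical line over $\Omega$ to meet $M$ exactly once. Consequently $M$ is a graph over $\Omega$, hence a topological disk.

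The main obstacle is twofold. For $r\ge 2$ the equation $H_r=\mathrm{const}$ is only degenerate elliptic on hypersurfaces whose principal curvatures lie in the G\r{a}rding cone $\Gamma_r$, so before any comparison or tangency argument is legitimate one must verify that $M$ is admissible in this sense; the barriers are arranged so that this holds throughout. The second, and I expect sharper, difficulty is the behavior of the reflection at the boundary slice: one must exclude a first contact occurring along $\partial M$ or at a boundary point where $M$ becomes tangent to a slice. This is exactly where horoconvexity and smallness are used, guaranteeing that $M$ meets $\mathbb H^n\times\{0\}$ transversally and that the reflected caps remain on the correct side near $\partial M$, so that the reflection can be completed down to $t=0$.
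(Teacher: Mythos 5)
Your proposal rests on the claim that $M$ is a vertical graph over the planar domain $\Omega$ bounded by $\partial M$, obtained by first confining $M$ to the solid cylinder $\overline{\Omega}\times\mathbb R$ with horocylinder barriers and then running Alexandrov reflection in horizontal slices down to $t=0$. This is not what happens, and the gap is not technical but structural. A direct counterexample to your intermediate claims: take the compact rotational $H_r$-sphere $\mathcal S_r$ (which exists precisely because $H_r>(n-r)/n$) and cut it by a horizontal slice near its south pole. The result is a compact embedded $H_r$-hypersurface in $\mathbb H^n\times[0,\infty)$ whose boundary is a small horoconvex geodesic sphere in the slice $t=0$, yet it is a large balloon that escapes $\overline{\Omega}\times\mathbb R$ entirely and is nowhere close to being a vertical graph over $\Omega$. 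It is, of course, still a disk --- which shows the theorem is about topology, not graphicality. Your horocylinder sliding argument must therefore fail, and it does: at the first contact the balloon lies on the \emph{concave} side of the horocylinder (inside the horoball times $\mathbb R$) with strictly larger $H_r$, which is exactly the configuration the tangency principle permits (a small sphere internally tangent to a larger barrier), so no contradiction arises and no confinement follows. Likewise, in the horizontal Alexandrov step a first interior tangency at some level $t_0>0$ only identifies the reflected cap with \emph{part} of the lower portion of $M$; it does not force boundary points at height $2t_0$, and in the balloon example such a tangency genuinely occurs near the equatorial level without any contradiction. The reflection argument only yields the weaker (and correct) statement that the part of $M$ above $t=h_M/2$ is a vertical graph.

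The proof in the paper is organized around exactly this obstruction. One splits into two cases according to whether the height $h_M$ of $M$ is smaller or larger than the height $h_{\mathcal C_r}$ of a translation-invariant $H_r$-hypersurface $\mathcal C_r$ (a piece of horizontal cylinder, Theorem \ref{str-thm-cyl2}). In the low case, sliding $\mathcal C_r$ horizontally bounds the horizontal extent of $M$ (Claim \ref{claim1}), a spherical cap of $\mathcal S_r$ then pushes $M$ into $\Omega\times\mathbb R$ using the crucial inequality $R_{\mathcal C_{r,\epsilon}}<R_{\mathcal S_r}$ of Lemma \ref{radii-comparison} (Claim \ref{claim2}), and only then does horizontal Alexandrov reflection show $M$ is a graph. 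In the high case $M$ is decomposed, \`a la Ros--Rosenberg, into the vertical graph above $t=h_M/2$ and the horizontal graph outside $\Omega\times\mathbb R$; the substance of the proof (Claims \ref{claim4}--\ref{claim7}) is to show $M\cap(\Omega\times(0,h_M/2])=\varnothing$, which requires the hyperbolic lima\c{c}on estimates of Lemma \ref{limacon-lemma}, the quantities $h^*$ and $\rho^*_{H_r}$ of Section \ref{estimates}, and the unbounded annulus (or peaked sphere, when $n=r$) barriers $\Sigma$ with $H_r=(n-r)/n$. None of this machinery appears in your outline, and without it the case $h_M\geq h_{\mathcal C_r}$ is untouched. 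Your computation that the vertical horocylinder has $H_r=(n-r)/n$ is correct and does explain the threshold, and your concern about admissibility (the G\r{a}rding cone / existence of a strictly convex point, cf.\ Remark \ref{convex-point}) is well placed, but the core of the argument is missing.
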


Horoconvexity of the boundary is a natural assumption in the hyperbolic space, whereas what ``sufficiently small'' means will be explained more precisely in Section \ref{main-result}, cf.\ Theorem \ref{main-thm}.
A fundamental tool in our proof is Alexandrov reflection tecnhique, for which one needs a tangency principle. For $H_r$-hypersurfaces in Riemannian manifolds, such a tangency principle is proved by Fontenele--Silva \cite{fontenele-silva} under suitable assumptions. We point out that the geometry of our  hypersurfaces implies the existence of a strictly convex point, which guarantees the validity of the tangency principle (see Remark \ref{convex-point}).

Analogous results as in the above theorem for the constant mean curvature case are due to 
Ros--Rosenberg in $\mathbb R^3$ \cite[Theorem 1]{ros-rosenberg}, Semmler in $\mathbb H^3$ \cite[Theorem 2]{semmler}, and Nelli--Pipoli in $\mathbb H^n \times \mathbb{R}$ \cite[Theorem 4.1]{nelli-pipoli}.
For $H_r$-hypersurfaces in Euclidean space, Ros--Rosenberg theorem is proved by Nelli--Semmler \cite[Theorem 1.2]{nelli-semmler}.

In order to prove our Ros--Rosenberg type theorem we also need to discuss certain $H_r$-hypersurfaces that are invariant under hyperbolic translation. 

The structure of the paper is the following. 
In Section \ref{rotational-surfaces} we classify $H_r$-hypersurfaces in $\mathbb H^n \times \mathbb R$ with rotational symmetry.
Since the cases $r$ even and odd exhibit substantial differences, we treat them separately in two subsections.
At the end of each one, we provide complete descriptions of the various hypersurfaces that occur, see Theorems \ref{str-thm1}--\ref{str-thm4}, \ref{str-thm5}--\ref{str-thm8}, and Tables \ref{table:1}--\ref{table:3}.
In Section \ref{translation-surfaces} we study specific translation $H_r$-hypersurfaces, cf.\ Theorem \ref{str-thm-cyl2}.
Finally, in Section \ref{estimates} and \ref{limacon} we provide useful estimates and tools to be employed in Section \ref{main-result}, where we prove Ros--Rosenberg's Theorem (see Theorem~\ref{main-thm}). 

\section{Classification of rotation \texorpdfstring{$H_r$}{Hr}-hypersurfaces}
\label{rotational-surfaces}

We will generally use the Poincar\'e model of the hyperbolic space $\mathbb H^n$, $n \geq 2$.
This is defined as the open ball of Euclidean unit radius in $\mathbb R^n$ centered at the origin, and is equipped with the metric $\tilde{g}$ that at a point $x \in \mathbb H^n$ takes the form
$$\tilde{g}_x \coloneqq \left(\frac{2}{1-\lVert x\rVert^2}\right)^2(dx_1^2+\dots+dx_n^2),$$
where $\lVert \cdot \rVert$ denotes the Euclidean norm, and $(x_i)_i$ are the standard coordinates in $\mathbb R^n$. 
We work with the Riemannian cylinder $\mathbb H^n \times \mathbb R$ with product metric $g \coloneqq \tilde{g}+dt^2$,
where $t$ is a global coordinate on the $\mathbb R$ factor.

In order to describe rotational hypersurfaces inside $\mathbb H^n \times \mathbb R$ we follow \cite{elb-earp}.
Up to isometry of the ambient space, a rotationally invariant hypersurface is determined by rotation of a profile curve contained in a vertical plane through the origin inside $\mathbb H^n \times \mathbb R$.
Let us take the plane 
\begin{equation*}
V \coloneqq \{(x_1,\dots,x_n,t) \in \mathbb H^n \times \mathbb R: x_1 = \dots = x_{n-1} = 0\}, 
\end{equation*}
and consider the curve parametrized by $\rho > 0$ given as $$x_n = \tanh(\rho/2), \qquad t=\lambda(\rho).$$
The function $\lambda$ will be determined by imposing that the rotational hypersurface generated by the profile curve have $r$-th mean curvature equal to a positive constant.
We already defined the $r$-th mean curvature in the Introduction, but we write it here for further references. 
\begin{definition}
\label{r-mean-curv}
Let $k_1,\dots,k_n$ be the principal curvatures of an immersed hypersurface in any Riemannian manifold.
The \emph{$r$-th mean curvature} $H_r$ is the elementary symmetric polynomial defined as
$$\binom{n}{r}H_r \coloneqq \sum_{i_1 < \dots < i_r} k_{i_1}k_{i_2}\cdots k_{i_r}.$$
\end{definition}

Rotating the curve about the line $\{0\} \times \mathbb R$ generates a hypersurface with parametrization
$$\mathbb R_+ \times S^{n-1} \to \mathbb H^n \times \mathbb R, \qquad (\rho,\xi) \mapsto (\tanh(\rho/2)\xi,\lambda(\rho)).$$
The unit normal field to the immersion is
$$\nu = \frac{1}{(1+\dot{\lambda}^2)^{\frac12}}\left(-\frac{\dot{\lambda}}{2\cosh^2(\rho/2)}\xi,1\right),$$
and the associated principal curvatures are
\begin{equation}
\label{data}
k_1 = \dots = k_{n-1} = \cotgh(\rho)\frac{\dot{\lambda}}{(1+\dot{\lambda}^2)^{\frac12}}, \qquad k_n = \frac{\ddot{\lambda}}{(1+\dot{\lambda}^2)^{\frac32}},
\end{equation}
where $\dot{\lambda}$ denotes the derivative of $\lambda$ with respect to $\rho$. 
By applying suitable vertical reflections or translations of the hypersurface generated by the curve defined by $\lambda$, one gets several types of rotationally invariant hypersurfaces.
Care should be taken when applying the transfomation $\lambda \mapsto -\lambda$, as this changes the orientation of the hypersurface.
However, setting $\nu \mapsto -\nu$ leaves the signs of each $k_i$ unchanged.
Hereafter we classify those rotationally invariant hypersurfaces with positive constant $r$-th mean curvature.

Specializing the expression of the $r$-th mean curvature to the case $k_1 = \dots = k_{n-1}$ and $k_n$ as in \eqref{data} we find
$$nH_r = (n-r)\cotgh^r(\rho)\frac{\dot{\lambda}^r}{(1+\dot{\lambda}^2)^{\frac{r}{2}}}+\cotgh^{r-1}(\rho)\frac{r\dot{\lambda}^{r-1}\ddot{\lambda}}{(1+\dot{\lambda}^2)^{\frac{r+2}{2}}}.$$
If we divide by $\cosh^{r-1}(\rho)$ and multiply by $\sinh^{n-1}(\rho)$ both sides of the identity, we can rewrite the above as
\begin{equation}
\label{start}
n\frac{\sinh^{n-1}(\rho)}{\cosh^{r-1}(\rho)}H_r = \frac{d}{d \rho}\left(\sinh^{n-r}(\rho)\frac{\dot{\lambda}^r}{(1+\dot{\lambda}^2)^{\frac{r}{2}}} \right), \qquad r = 1,\dots,n.
\end{equation}
Choose now $H_r$ to be a positive constant, and define the function
$$I_{n,r}(\rho) \coloneqq \int_0^{\rho} \frac{\sinh^{n-1}(\tau)}{\cosh^{r-1}(\tau)}\,d\tau.$$
We can then integrate \eqref{start} once to obtain 
\begin{equation}
\label{first-integ}
nH_rI_{n,r}(\rho)+d_r = \sinh^{n-r}(\rho)\frac{\dot{\lambda}^r}{(1+\dot{\lambda}^2)^{\frac{r}{2}}},
\end{equation}
where $d_r$ is an integration constant depending on $r$. Then one integrates again to find (up to a sign for $r$ even)
\begin{equation}
\label{lambda}
\lambda_{H_r,d_r}(\rho) = \int_{\rho_-}^{\rho} \frac{(nH_rI_{n,r}(\xi)+d_r)^{\frac{1}{r}}}{\sqrt{\sinh^{\frac{2(n-r)}{r}}(\xi)-(nH_rI_{n,r}(\xi)+d_r)^{\frac{2}{r}}}}\,d\xi,
\end{equation}
where $\rho_- \geq 0$ is the infimum of the set where the integrand function makes sense. 
One should think of $\lambda$ as a one-parameter family of functions depending on $d_r$.
We write $\lambda_{H_r,d_r}$ as in \eqref{lambda} to make the dependence on $H_r$ and $d_r$ more explicit.

\begin{remark}
\label{signs}
When $r$ is even, the right-hand side in \eqref{first-integ} is non-negative, which forces the left-hand side to be non-negative as well.
In this case $-\lambda$ satisfies \eqref{first-integ}.
When $r$ is odd, identity \eqref{first-integ} implies that $\dot{\lambda}$ has the same sign of $nH_rI_{n,r}+d_r$.
Moreover, $-\lambda$ satisfies \eqref{first-integ} only after changing $\nu \mapsto -\nu$.
Lastly, critical points for $\lambda_{H_r,d_r}$ are zeros of $nH_rI_{n,r}+d_r$.
The second derivative of $\lambda_{H_r,d_r}$ is computed as
\begin{equation}
\label{second-derivative}
\ddot{\lambda}_{H_r,d_r}(\rho) = \frac{\cosh(\rho)\sinh^{\frac{2(n-r)}{r}-1}(\rho)\left(nH_r\frac{\sinh^n(\rho)}{\cosh^r(\rho)}-(n-r)(nH_rI_{n,r}(\rho)+d_r)\right)}{r(nH_rI_{n,r}(\rho)+d_r)^{\frac{r-1}{r}}\left(\sinh^{\frac{2(n-r)}{r}}(\rho)-(nH_rI_{n,r}(\rho)+d_r)^{\frac{2}{r}}\right)^{\frac{3}{2}}}.
\end{equation}
We will refer to this expression when studying the convexity of $\lambda_{H_r,d_r}$ and its regularity up to second order. 
Note that if $r>1$ the second derivative of $\lambda_{H_r,d_r}$ is not defined at its critical points.
\end{remark}

\begin{remark}
\label{integ}
Let us discuss a few more details on $I_{n,r}$. 
It is clear that $I_{n,r}(0) = 0$ and $I_{n,r}'(0) = 0$. Also, $I_{n,r}'(\rho) > 0$ and $I_{n,r}''(\rho) > 0$ for $\rho > 0$ and all $n \geq r \geq 1$, so $I_{n,r}$ is a non-negative increasing convex function.  For all values $n \geq r$ we have $nI_{n,r}(\rho) \approx \rho^n$ for $\rho \to 0$.
Moreover, for $n>r$, one has the asymptotic behavior $(n-r)I_{n,r}(\rho) \approx \sinh^{n-r}(\rho)$ for $\rho \to +\infty$, whereas for $n=r$ we have $I_{n,n}(\rho) \approx \rho$ for $\rho \to +\infty$.
\end{remark}

Next, we analyze $\lambda_{H_r,d_r}$ as in \eqref{lambda} for all values of $r = 1,\dots,n$, $H_r > 0$, and $d_r \in \mathbb R$.
The goal is to find the domain of $\lambda_{H_r,d_r}$, study its qualitative behavior, and describe the  rotational $H_r$-hypersurfaces generated by the graph of $\lambda_{H_r,d_r},$ including the description of their singularities.
This can be thought of as a classification \`a la Delaunay of rotational $H_r$-hypersurfaces in $\mathbb H^n \times \mathbb R$.
Note that we choose $n$, $r$, and $H_r > 0$ a priori, so that the family of functions $\lambda_{H_r,d_r}$ really depends only on the parameter $d_r$.
We will find a critical value of $H_r$, namely $(n-r)/n$, which we use together with the sign of $d_r$ and the parity of $r$ to distinguish various cases.
Also, we discuss $n>r$ and $n=r$ separately, as the latter case exhibits substantial differences from the former.
One may find the salient properties of the classified hypersurfaces in Tables \ref{table:1}--\ref{table:3} at the end of this section.

\subsection{Case \texorpdfstring{$r$}{r} even}
\label{case-r-even}
We start by proving the following result.

\begin{proposition}
\label{r-even-n>r1}
Assume $r$ even, $n>r$, and $d_r \leq 0$.
\begin{enumerate}
\item If $0<H_r \leq (n-r)/n$, then $\lambda_{H_r,d_r}$ is defined on $[\rho_-,+\infty)$, where $\rho_- \geq 0$ is the only solution of $nH_rI_{n,r}(\rho)+d_r = 0$.
\item If $H_r > (n-r)/n$, then $\lambda_{H_r,d_r}$ is defined on $[\rho_-,\rho_+]$, where $\rho_-$ is as above, and
$\rho_+ > 0$ is the only solution of $\sinh^{n-r}(\rho)-(nH_rI_{n,r}(\rho)+d_r)=0$.
\end{enumerate}
Further, $\lambda_{H_r,d_r}$ is increasing and convex in the interior of its domain. 
Also, $\lambda_{H_r,d_r}(\rho_-) = 0 = \lim_{\rho \to \rho_-}\dot{\lambda}_{H_r,d_r}(\rho)$.
In case $(1)$, $\lambda_{H_r,d_r}$ is unbounded. In case $(2)$ $\lim_{\rho \to \rho_+} \dot{\lambda}_{H_r,d_r}(\rho) = +\infty$. 
In both cases, $d_r= 0$ if and only if $\rho_- = 0$. We have $\lim_{\rho \to 0} \ddot{\lambda}_{H_r,0}(\rho) = H_r{}^{1/r}$, and for $d_r < 0$ one finds $\lim_{\rho \to \rho_-} \ddot{\lambda}_{H_r,d_r}(\rho) = +\infty$ (Figure \ref{fig:1}).
\end{proposition}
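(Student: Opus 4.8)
The plan is to reduce everything to the study of the two auxiliary functions
\[
f(\rho):=nH_rI_{n,r}(\rho)+d_r,\qquad g(\rho):=\sinh^{n-r}(\rho)-f(\rho).
\]
Since $r$ is even, $x\mapsto x^{2/r}$ is increasing on $[0,\infty)$, so the integrand in \eqref{lambda} is real and its square root is positive exactly where $f(\rho)\ge 0$ and $\sinh^{n-r}(\rho)>f(\rho)$, that is, where $f\ge0$ and $g>0$. Thus the domain of $\lambda_{H_r,d_r}$ is $\{f\ge0\}\cap\{g>0\}$, and the whole statement will follow by analysing the sign and monotonicity of $f$ and $g$ together with the boundary behaviour of the integrand.

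First I would locate $\rho_-$. By Remark \ref{integ} one has $f(0)=d_r\le 0$, $f'=nH_rI_{n,r}'>0$ on $(0,\infty)$, and $f\to+\infty$; hence $f$ has a unique zero $\rho_-\ge0$, with $\rho_-=0$ precisely when $d_r=0$ and $\rho_->0$ when $d_r<0$, which already gives ``$d_r=0$ iff $\rho_-=0$''. For $g$ I would compute
\[
g'(\rho)=\sinh^{n-r-1}(\rho)\cosh(\rho)\bigl((n-r)-nH_r\tanh^r(\rho)\bigr),
\]
whose sign is that of the bracket, a decreasing function of $\rho$ running from $n-r$ to $n-r-nH_r$. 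If $0<H_r\le(n-r)/n$ the bracket stays positive, so $g$ is increasing; since $g(0)=-d_r\ge0$ we get $g>0$ on $(0,\infty)$ and the domain is $[\rho_-,+\infty)$ — case $(1)$. If $H_r>(n-r)/n$ the bracket vanishes once, so $g$ increases and then decreases; using $(n-r)I_{n,r}(\rho)\approx\sinh^{n-r}(\rho)$ (Remark \ref{integ}) together with $nH_r>n-r$ one gets $g\to-\infty$, so $g$ has a unique zero $\rho_+$ past its maximum and the domain is $[\rho_-,\rho_+]$ — case $(2)$.

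The qualitative properties then follow. Monotonicity is immediate, since $\dot\lambda_{H_r,d_r}$ equals the positive integrand in the interior, and $\lambda_{H_r,d_r}(\rho_-)=0$ by definition. For $\dot\lambda(\rho_-)=0$ I would note that at $\rho_->0$ the numerator $f^{1/r}$ vanishes while the denominator is positive, whereas for $\rho_-=0$ the estimate $nI_{n,r}(\rho)\approx\rho^n$ yields $\dot\lambda_{H_r,0}(\rho)\approx H_r^{1/r}\rho$, which simultaneously gives $\lim_{\rho\to0}\ddot\lambda_{H_r,0}=H_r^{1/r}$. Convexity is the cleanest point: by \eqref{second-derivative} the sign of $\ddot\lambda$ in the interior is that of $N(\rho):=\tanh(\rho)\,f'(\rho)-(n-r)f(\rho)$, and a direct computation using $f''=f'\bigl((n-1)\coth\rho-(r-1)\tanh\rho\bigr)$ collapses to $N'(\rho)=rf'(\rho)/\cosh^2(\rho)>0$; since $N(\rho_-)=\tanh(\rho_-)f'(\rho_-)\ge0$, we conclude $N\ge 0$ and $\lambda$ is convex. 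The blow-up of $\ddot\lambda$ at $\rho_-$ for $d_r<0$ then comes from the factor $f^{(r-1)/r}\to0^+$ in the denominator of \eqref{second-derivative}, the numerator staying positive.

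The main obstacle is the behaviour at the right end. Write $a:=\sinh^{n-r}(\rho)$, so the integrand equals $f^{1/r}/\sqrt{a^{2/r}-f^{2/r}}$. In case $(2)$, near $\rho_+$ one has $g(\rho_+)=0$ with $g'(\rho_+)<0$, so expanding $a^{2/r}-f^{2/r}\approx\tfrac2r a^{2/r-1}(a-f)=\tfrac2r a^{2/r-1}g$ shows the integrand behaves like $(\rho_+-\rho)^{-1/2}$; this is integrable (so $\lambda$ reaches $\rho_+$) but forces $\dot\lambda\to+\infty$. In case $(1)$ unboundedness requires control at infinity, and I would show $\liminf_{\rho\to\infty}\dot\lambda>0$: when $H_r<(n-r)/n$ the ratio $f/a\to c:=nH_r/(n-r)<1$, whence $\dot\lambda\to c^{1/r}/\sqrt{1-c^{2/r}}>0$; in the borderline case $H_r=(n-r)/n$ one has $c=1$, so $g/a\to0$ and the same expansion gives $\dot\lambda\approx\sqrt{r/2}\,\sqrt{a/g}\to+\infty$. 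In every case $\int^{\infty}\dot\lambda\,d\rho=+\infty$, so $\lambda$ is unbounded. The delicate point is this borderline regime, where one must extract from Remark \ref{integ} enough of the asymptotics of $I_{n,r}$ to guarantee $g=o(a)$; this is where I expect the bulk of the technical work.
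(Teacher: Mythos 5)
Your proposal is correct and follows essentially the same route as the paper: your $g$ is the paper's auxiliary function $f$, your $N$ is $nH_r$ times the paper's convexity function $g$, and the endpoint analysis (the first-order expansion of $x\mapsto x^{2/r}$ at $\rho_+$, the blow-up of $\ddot\lambda_{H_r,d_r}$ at $\rho_-$ from the factor $f^{(r-1)/r}$, the limit $\ddot\lambda_{H_r,0}\to H_r^{1/r}$ via $nI_{n,r}(\rho)\approx\rho^n$) matches the paper's argument step for step. The one point you flag as delicate, namely $g=o(a)$ in the borderline case $H_r=(n-r)/n$, is in fact immediate from the asymptotic $(n-r)I_{n,r}(\rho)\approx\sinh^{n-r}(\rho)$ already recorded in Remark \ref{integ}, so no extra technical work is needed there.
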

\begin{figure}[htb]
\begin{center}
\begin{tikzpicture}[scale=1.3]
  \draw[->] (-1.2,0) -- (2,0);
  \draw[->] (0,-0.5) -- (0,1.5);
  \draw[thick] (0.3,0) .. controls (1,0) and (1.3,0.5) .. (1.6,1);
  \draw[thick, dashed] (1.6,1) .. controls (1.72,1.2) and (1.7,1.2) .. (1.74,1.25);
  \node at (-1,1) {$H_r \leq \frac{n-r}{n}$};
  \node at (0.3,-0.2) {$\rho_-$};
  \node at (-0.15,-0.2) {$0$};
  \node at (2,-0.2) {$\rho$};
  \node at (1.2,1.2) {$\lambda_{H_r,d_r}$};
  \node at (-0.2,1.5) {$t$};
\end{tikzpicture}
\qquad 
\begin{tikzpicture}[scale=1.3]
   \draw[->] (-1.2,0) -- (2,0);
  \draw[->] (0,-0.5) -- (0,1.5);
  \draw[thick] (0.3,0) .. controls (0.5,0) and (1.5,0.01) .. (1.5,1);
  \draw[dotted] (1.5,1) -- (1.5,0);
  \node at (-1,1) {$H_r > \frac{n-r}{n}$};
  \node at (0.3,-0.2) {$\rho_-$};
  \node at (-0.15,-0.2) {$0$};
  \node at (1.5,-0.2) {$\rho_+$};
  \node at (1.15,1.2) {$\lambda_{H_r,d_r}$};
  \node at (2,-0.2) {$\rho$};
  \node at (-0.2,1.5) {$t$};
\end{tikzpicture}
\end{center}
\caption{Behavior of $\lambda_{H_r,d_r}$ for $n>r$, $r$ even, and $d_r \leq 0$. Note that $\rho_- = 0$ if and only if $d_r = 0$.}
\label{fig:1}
\end{figure}
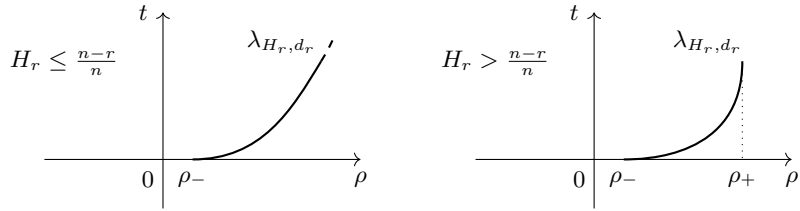
\begin{proof}
The function $nH_rI_{n,r}+d_r$ must be non-negative as noted in Remark~\ref{signs}, hence $\lambda_{H_r,d_r}$ is well-defined when
$$0 \leq nH_rI_{n,r}(\rho)+d_r < \sinh^{n-r}(\rho).$$
There is a unique value $\rho_- \geq 0$ depending on $d_r$ such that $nH_rI_{n,r}(\rho_-)+d_r = 0$, and Remark \ref{integ} implies $d_r = 0$ if and only if $\rho_-=0$.
Set 
$$f(\rho) \coloneqq \sinh^{n-r}(\rho)-(nH_rI_{n,r}(\rho)+d_r), \qquad \rho \geq 0.$$
Then $f(\rho_-) \geq 0$ and $f'(\rho) = \sinh^{n-r-1}(\rho)\cosh(\rho)((n-r)-nH_r\tanh^r(\rho))$.
We have $f'(\rho) > 0$ for $\rho > \rho_-$ when $\tanh^r(\rho) < (n-r)/nH_r$. So if $0<H_r \leq (n-r)/n$ the inequality is always true, and $f$ has no zeros in $(\rho_-,+\infty)$.
If $H_r > (n-r)/n$ then $\lim_{\rho \to +\infty} f'(\rho) = -\infty$, so $f$ eventually decreases to $-\infty$.
This implies $f$ has a zero $\rho_+ > \rho_-$ depending on the value of $d_r$.

It follows that $\lambda_{H_r,d_r}$ is defined on some interval with $\rho_-$ as minimum.
If $0<H_r \leq (n-r)/n$ then the interval is unbounded.
We have $\lambda_{H_r,d_r}(\rho_-) = 0 = \lim_{\rho \to \rho_-} \dot{\lambda}_{H_r,d_r}(\rho)$, and $\lim_{\rho \to +\infty} \lambda_{H_r,d_r}(\rho) = +\infty$ by the asymptotic behavior of $I_{n,r}$ noted in Remark \ref{integ}.
Moreover, $\lambda_{H_r,d_r}$ is increasing as the integrand function is positive away from $\rho_-$.
If $H_r > (n-r)/n$ then the denominator of the integrand function has a zero $\rho_+$ depending on $d_r$. 
This means $\lambda_{H_r,d_r}$ is defined on $[\rho_-,\rho_+)$, and its slope tends to $+\infty$ when $\rho \to \rho_+$. 
We claim that $\lambda_{H_r,d_r}$ is finite at $\rho_+$.
Convergence of the integral is essentially determined by the behavior of 
$$h(\rho) \coloneqq \sinh^{\frac{n-r}{r}}(\rho)-(nH_rI_{n,r}(\rho)+d_r)^{\frac{1}{r}}$$ 
near $\rho_+$.
But $h(\rho_+) = 0$, and $h'(\rho_+)$ is finite, which implies that $\lambda_{H_r,d_r}$ behaves as the integral of $1/(\rho_+-\rho)^{1/2}$ for $\rho$ close to $\rho_+$, whence convergence at $\rho_+$.

In order to check convexity on $(\rho_-,\rho_+)$, observe that the sign of $\ddot{\lambda}_{H_r,d_r}$ as in \eqref{second-derivative} is determined by the sign of
$$g(\rho) \coloneqq \frac{\sinh^n(\rho)}{\cosh^r(\rho)}-(n-r)I_{n,r}(\rho) - \frac{d_r(n-r)}{nH_r}.$$
We trivially have $g(\rho_-) \geq 0$ and $g'(\rho) = r\sinh^{n-1}(\rho)/\cosh^{r+1}(\rho) > 0$, so that $g(\rho)$ is always positive for $\rho > 0$.
Continuity of the second derivative of $\lambda_{H_r,d_r}$ at the origin for $d_r=0$ follows by an explicit calculation using Remark \ref{integ},
whereas the statement $\lim_{\rho \to \rho_-} \ddot{\lambda}_{H_r,d_r}(\rho) = \infty$ for $d_r < 0$ is trivial, cf.\ \eqref{second-derivative}.
\end{proof}

We now go on with the analysis of the case $d_r > 0$, but we first make a few technical considerations.
For $r > 2$ we have the following formula, which can be proved via integration by parts:
\begin{equation}
\label{integral_formula}
I_{r+1,r}(x) = -\frac{\sinh^{r-1}(x)}{(r-2)\cosh^{r-2}(x)}+\frac{r-1}{r-2}I_{r-1,r-2}(x).
\end{equation} 
Recall that for a natural number $m$ the double factorial is $m!! \coloneqq m(m-2)!!$, and $1!! = 0!! = 1$.
Now take $r > 2$ even. 
From the recurrence relation \eqref{integral_formula} we derive the following closed expression for $I_{r+1,r}(x)$:
\begin{align}
\label{formula_integral}
I_{r+1,r}(x) & = -\sinh(x)\biggl(\frac{1}{r-2}\tanh^{r-2}(x)+\frac{r-1}{(r-2)(r-4)}\tanh^{r-4}(x) \nonumber \\
& \qquad +\frac{(r-1)(r-3)}{(r-2)(r-4)(r-6)}\tanh^{r-6}(x)+\dots + \frac{(r-1)!!}{3(r-2)!!}\tanh^2(x)\biggr) \nonumber \\ 
& \qquad + \frac{(r-1)!!}{(r-2)!!}I_{3,2}(x).
\end{align}
The explicit expression $I_{3,2}(x) = \sinh(x)-\arctan(\sinh(x))$ returns now a closed formula for each $I_{r+1,r}(x)$. 
We note here a useful identity which can be proved by induction.

\begin{lemma}
\label{important_identity}
Let $r \geq 2$ be an even natural number. Then
\begin{align*}
\frac{(r-1)!!}{(r-2)!!} & = 1+\frac{1}{r-2}+\frac{r-1}{(r-2)(r-4)}+\frac{(r-1)(r-3)}{(r-2)(r-4)(r-6)} +\\
& \qquad + \dots + \frac{(r-1)!!}{3(r-2)!!},
\end{align*}
where, for all $r,$ the sum on the right-hand side must be truncated in such a way that all summands exist.
\end{lemma}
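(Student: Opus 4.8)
The plan is to recognize the right-hand side as a telescoping sum. Writing $r = 2m$ with $m \geq 1$, the right-hand side is $\sum_{k=0}^{m-1} c_k$, where $c_0 := 1$ and, for $1 \leq k \leq m-1$,
\[
c_k := \frac{(r-1)(r-3)\cdots(r-2k+3)}{(r-2)(r-4)\cdots(r-2k)},
\]
the numerator being a product of $k-1$ descending odd factors and the denominator a product of $k$ descending even factors. A short check confirms that the last summand $c_{m-1}$, obtained by setting $k = m-1$ so that the denominator terminates at $r-2(m-1)=2$ and the numerator at $r-2(m-1)+3 = 5$, equals $\frac{(r-1)!!}{3(r-2)!!}$, matching the stated final term of \eqref{formula_integral}.

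First I would introduce the partial products
\[
P_j := \frac{(r-1)(r-3)\cdots(r-2j+1)}{(r-2)(r-4)\cdots(r-2j)}, \qquad j = 0, 1, \dots, m-1,
\]
with $P_0 := 1$, where now numerator and denominator each carry $j$ factors. Observe that $P_{m-1} = \frac{(r-1)(r-3)\cdots 3}{(r-2)(r-4)\cdots 2} = \frac{(r-1)!!}{(r-2)!!}$, which is precisely the left-hand side of the identity; so it suffices to prove $\sum_{k=0}^{m-1} c_k = P_{m-1}$.

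The key step is the elementary identity $c_k = P_k - P_{k-1}$ for $1 \leq k \leq m-1$. Indeed, by inspection $P_k = P_{k-1}\cdot\frac{r-2k+1}{r-2k}$, so $P_k - P_{k-1} = \frac{P_{k-1}}{r-2k}$, and this last expression is exactly $c_k$, since appending the factor $r-2k$ to the denominator of $P_{k-1}$ reproduces $c_k$. With this in hand the claim follows by induction on the number of summands: setting $\Sigma_j := \sum_{k=0}^{j} c_k$, the base case $\Sigma_0 = c_0 = 1 = P_0$ is immediate, and the inductive step $\Sigma_{j+1} = \Sigma_j + c_{j+1} = P_j + (P_{j+1}-P_j) = P_{j+1}$ uses only the displayed identity. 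Taking $j = m-1$ yields $\Sigma_{m-1} = P_{m-1} = \frac{(r-1)!!}{(r-2)!!}$, as desired. Equivalently, one telescopes directly via $\sum_{k=0}^{m-1} c_k = 1 + \sum_{k=1}^{m-1}(P_k - P_{k-1}) = P_{m-1}$.

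The only real obstacle is bookkeeping: correctly reading off the general summand $c_k$ from the pattern exhibited in \eqref{formula_integral}, keeping straight the differing numbers of numerator and denominator factors, and in particular confirming that the listed final term $\frac{(r-1)!!}{3(r-2)!!}$ coincides with $c_{m-1}$. Once the summand is pinned down, spotting the telescoping relation $c_k = P_k - P_{k-1}$ is the crux, after which the induction (or direct telescoping) is routine.
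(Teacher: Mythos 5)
Your proof is correct. The paper does not actually write out a proof of this lemma --- it merely asserts that the identity ``can be proved by induction'' --- and your telescoping argument, with the partial products $P_j$ satisfying $c_k = P_k - P_{k-1}$ and $P_{m-1} = \frac{(r-1)!!}{(r-2)!!}$, is a clean and complete realization of exactly that inductive strategy, with the bookkeeping (including the edge case $r=2$ and the identification of the final summand) handled correctly.
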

We shall see that when $d_r > 0$ then $\lambda_{H_r,d_r}$ is not well-defined for $d_r$ too large. 
We will combine \eqref{formula_integral} and Lemma \ref{important_identity} to give a precise upper bound for $d_r$ when $n=r+1$ and $H_r = (n-r)/n = 1/(r+1)$.
\begin{proposition}
\label{r_even-2}
Assume $r$ even, $n>r$, and $d_r > 0$.
\begin{enumerate}
\item If $0<H_r < (n-r)/n$, then $\lambda_{H_r,d_r}$ is defined on $[\rho_-,+\infty)$, where $\rho_- > 0$ is the only solution of $\sinh^{n-r}(\rho)-(nH_rI_{n,r}(\rho)+d_r) = 0$ on $(0,\infty)$. 
\item If $H_r = (n-r)/n$, then when $n=r+1$ we need $d_r < (r-1)!!\pi/2(r-2)!!$ for $\lambda_{H_r,d_r}$ to be well-defined, whereas for $n>r+1$ we have no constraint.
Under such conditions, the results in the previous point hold.
\item If $H_r > (n-r)/n$, set $\tau > 0$ such that $\tanh^r(\tau) = (n-r)/nH_r$. Then $d_r < \sinh^{n-r}(\tau)-nH_rI_{n,r}(\tau)$ for $\lambda_{H_r,d_r}$ to
be defined. So $\lambda_{H_r,d_r}$ is a function on $[\rho_-,\rho_+] \subset (0,+\infty)$, where $\sinh^{n-r}(\rho_{\pm})-(nH_rI_{n,r}(\rho_{\pm})+d_r) = 0$. 
\end{enumerate}
Further, $\lambda_{H_r,d_r}$ is increasing in the interior of its domain.
In cases $(1)$--$(2)$, $\lambda_{H_r,d_r}(\rho_-) = 0$, $\lim_{\rho \to \rho_-} \dot{\lambda}_{H_r,d_r}(\rho) = +\infty$, $\lambda_{H_r,d_r}$ is unbounded, and is concave in the interior of its domain.
In case $(3)$, $\lambda_{H_r,d_r}(\rho_-) = 0$, $\lim_{\rho \to \rho_{\pm}}\dot{\lambda}_{H_r,d_r}(\rho) = +\infty$, $\lambda_{H_r,d_r}$ has a unique inflection point in $(\rho_-,\rho_+)$, and goes from being concave to convex (Figure \ref{fig:2}).
\end{proposition}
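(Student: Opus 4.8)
The whole statement reduces to a sign analysis of two auxiliary functions. For the domain I would study $f(\rho):=\sinh^{n-r}(\rho)-(nH_rI_{n,r}(\rho)+d_r)$, whose positivity is exactly the condition that the radicand of the integrand in \eqref{lambda} be positive. Since $d_r>0$ we have $f(0)=-d_r<0$, and $f'(\rho)=\sinh^{n-r-1}(\rho)\cosh(\rho)\big((n-r)-nH_r\tanh^r(\rho)\big)$, so the sign of $f'$ is governed by $\tanh^r(\rho)$ against $(n-r)/(nH_r)$. When $H_r<(n-r)/n$ (resp.\ $H_r=(n-r)/n$) this threshold is $>1$ (resp.\ $=1$), hence $f$ is strictly increasing; when $H_r>(n-r)/n$ the function $f$ increases on $(0,\tau)$ and decreases afterward, with maximum at the stated $\tau$. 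Combined with the growth of $f$ at $+\infty$, read off from the asymptotics of $I_{n,r}$ in Remark \ref{integ}, this yields the three cases: a single zero $\rho_-$ with unbounded domain in (1) and in (2) for $n>r+1$; and two zeros $\rho_\pm$ in (3) precisely when the maximum satisfies $f(\tau)>0$, i.e.\ $d_r<\sinh^{n-r}(\tau)-nH_rI_{n,r}(\tau)$.

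The sharp threshold in (2) for $n=r+1$ is the delicate point. Here $nH_r=n-r=1$, so $f$ increases to $\lim_{\rho\to\infty}\big(\sinh(\rho)-I_{r+1,r}(\rho)\big)-d_r$, and $\lambda_{H_r,d_r}$ is well defined iff this limit is positive. To evaluate the limit I would insert the closed form \eqref{formula_integral} and let $\rho\to\infty$, using $\tanh(\rho)\to1$ and $\arctan(\sinh(\rho))\to\pi/2$; Lemma \ref{important_identity} is exactly what makes the coefficient of $\sinh(\rho)$ collapse to $1$, cancelling the leading $\sinh(\rho)$ and leaving the finite value $\frac{(r-1)!!}{(r-2)!!}\frac{\pi}{2}$, hence the bound $d_r<(r-1)!!\,\pi/2(r-2)!!$. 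The boundary behaviour of $\lambda_{H_r,d_r}$ is then uniform across the cases: at any zero of $f$ the radical vanishes while the numerator $(nH_rI_{n,r}+d_r)^{1/r}\ge d_r^{1/r}>0$ stays positive, forcing $\dot\lambda\to+\infty$. Writing $h:=u^{1/r}-v^{1/r}$ with $u=\sinh^{n-r}$ and $v=nH_rI_{n,r}+d_r$, a zero $\rho_\pm$ of $f$ (where $u=v$) satisfies $h(\rho_\pm)=0$ and $h'(\rho_\pm)=\frac1r u^{1/r-1}f'(\rho_\pm)\ne0$, so it is simple; hence the integrand is $\sim|\rho-\rho_\pm|^{-1/2}$ and $\lambda_{H_r,d_r}$ converges at the endpoints. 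Monotonicity ($\dot\lambda>0$) is immediate from positivity of the integrand, and $\lambda_{H_r,d_r}(\rho_-)=0$ by construction.

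It remains to settle convexity and unboundedness. Unboundedness in (1)--(2) follows by estimating the integrand of \eqref{lambda} at $+\infty$ via Remark \ref{integ}. For convexity I would use \eqref{second-derivative}: its denominator is positive (here $(nH_rI_{n,r}+d_r)^{(r-1)/r}>0$ precisely because $d_r>0$), so the sign of $\ddot\lambda$ equals that of $g(\rho):=\frac{\sinh^n(\rho)}{\cosh^r(\rho)}-(n-r)I_{n,r}(\rho)-\frac{(n-r)d_r}{nH_r}$, which is strictly increasing since $g'(\rho)=r\sinh^{n-1}(\rho)/\cosh^{r+1}(\rho)>0$. Evaluating at a zero $\rho_\pm$ of $f$ and substituting $nH_rI_{n,r}(\rho_\pm)+d_r=\sinh^{n-r}(\rho_\pm)$ gives the clean identity $g(\rho_\pm)=\sinh^{n-r}(\rho_\pm)\big(\tanh^r(\rho_\pm)-\frac{n-r}{nH_r}\big)$. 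In case (3) one has $\rho_-<\tau<\rho_+$, whence $g(\rho_-)<0<g(\rho_+)$, and monotonicity of $g$ forces a unique zero in $(\rho_-,\rho_+)$: this is the single inflection, with $\lambda_{H_r,d_r}$ passing from concave to convex. To obtain the concavity asserted in cases (1)--(2) I would instead verify that $g$ stays negative on the unbounded domain, which reduces to controlling $\lim_{\rho\to\infty}g$ through the asymptotics of $I_{n,r}$ in Remark \ref{integ}. I expect this convexity bookkeeping, together with the sharp $n=r+1$ threshold computation built on \eqref{formula_integral} and Lemma \ref{important_identity}, to be the main obstacle of the proof.
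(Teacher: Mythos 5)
Your proposal tracks the paper's proof step for step: the domain analysis via the sign of $f$ and $f'$, the identification of $n=r+1$ at $H_r=(n-r)/n$ as the delicate case and its resolution via \eqref{formula_integral} and Lemma \ref{important_identity}, convergence of \eqref{lambda} at simple zeros of $f$, and the case (3) convexity analysis through the monotone function $g$ with $g(\rho_\pm)=\sinh^{n-r}(\rho_\pm)\bigl(\tanh^r(\rho_\pm)-\tfrac{n-r}{nH_r}\bigr)$ are all exactly the paper's arguments. Two small caveats: for $n=r+1$, $r=2$ the closed form \eqref{formula_integral} is not available and one uses $I_{3,2}(x)=\sinh(x)-\arctan(\sinh(x))$ directly; and at the critical value $H_r=(n-r)/n$ with $n>r+1$, the asymptotic equivalence $(n-r)I_{n,r}\approx\sinh^{n-r}$ of Remark \ref{integ} does not by itself give $f\to+\infty$, since the leading terms cancel --- the paper instead computes $\lim_{\rho\to\infty}f'$, which equals $r$ for $n=r+2$ and $+\infty$ for $n>r+2$.

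The genuine problem is the step you defer: showing $g<0$ on all of $(\rho_-,\infty)$ in cases (1)--(2) by ``controlling $\lim_{\rho\to\infty}g$ through the asymptotics of $I_{n,r}$''. This cannot be done, because the asymptotics in Remark \ref{integ} control a ratio, not a difference. Concretely, $g'(\rho)=r\sinh^{n-1}(\rho)/\cosh^{r+1}(\rho)$ gives $g(\rho)=-\tfrac{(n-r)d_r}{nH_r}+r\int_0^{\rho}\sinh^{n-1}(\tau)/\cosh^{r+1}(\tau)\,d\tau$, and the integral diverges whenever $n\geq r+2$; hence $\lim_{\rho\to\infty}g=+\infty$, $g$ changes sign exactly once on $(\rho_-,\infty)$, and $\lambda_{H_r,d_r}$ acquires an inflection point, becoming convex for large $\rho$ (for instance $n=4$, $r=2$ gives $g=-\tanh^2\rho+2\ln\cosh\rho-\tfrac{d_2}{2H_2}\to+\infty$, and $\dot{\lambda}$ tends to a finite positive limit from below after first dipping). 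Even for $n=r+1$ one finds $\lim_{\rho\to\infty}g=r\int_0^{\infty}\tanh^r(\tau)/\cosh(\tau)\,d\tau-\tfrac{d_r}{nH_r}$, which is positive when $d_r$ is small. The paper's own proof asserts $\lim_{\rho\to\infty}g(\rho)=-d_r(n-r)/nH_r$ by the same misuse of Remark \ref{integ}, so you have reproduced, rather than introduced, this flaw; but your instinct that this is ``the main obstacle'' is correct, and it is not a bookkeeping obstacle: the asserted conclusion fails, and the correct statement in cases (1)--(2) is that $\lambda_{H_r,d_r}$ is concave near $\rho_-$ and, at least for $n\geq r+2$ (and for $n=r+1$ with $d_r$ below an explicit threshold), has a unique inflection point beyond which it is convex.
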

\begin{figure}[htb]
\begin{center}
\begin{tikzpicture}[scale=1.3]
  \draw[->] (-1.2,0) -- (2,0);
  \draw[->] (0,-0.5) -- (0,1.5);
  \draw[thick] (0.3,0) .. controls (0.3,0.5) and (1,1) .. (1.5,1.3);
  \draw[thick,dashed] (1.5,1.3) .. controls (1.7,1.42) and (1.7,1.41) .. (1.75,1.44);
  \node at (-1,1) {$H_r \leq \frac{n-r}{n}$};
  \node at (0.3,-0.2) {$\rho_-$};
  \node at (-0.15,-0.2) {$0$};
  \node at (2,-0.2) {$\rho$};
  \node at (2,1.2) {$\lambda_{H_r,d_r}$};
  \node at (-0.2,1.5) {$t$};
\end{tikzpicture}
\qquad 
\begin{tikzpicture}[scale=1.3]
   \draw[->] (-1.2,0) -- (2,0);
  \draw[->] (0,-0.5) -- (0,1.5);
  \draw[thick] (0.3,0) .. controls (0.3,0.6) and (1.5,0.5) .. (1.5,1);
  \draw[dotted] (1.5,1) -- (1.5,0);
  \node at (-1,1) {$H_r > \frac{n-r}{n}$};
  \node at (0.3,-0.2) {$\rho_-$};
  \node at (-0.15,-0.2) {$0$};
  \node at (1.5,-0.2) {$\rho_+$};
  \node at (1.15,1.2) {$\lambda_{H_r,d_r}$};
  \node at (2,-0.2) {$\rho$};
  \node at (-0.2,1.5) {$t$};
\end{tikzpicture}
\end{center}
\caption{Behavior of $\lambda_{H_r,d_r}$ for $n>r$, $r$ even, and $d_r > 0$.}
\label{fig:2}
\end{figure}
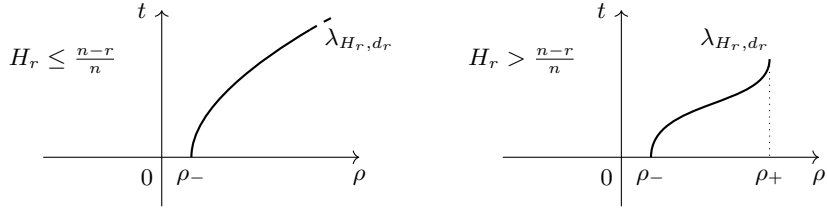
\begin{proof}
We have the constraint $0 \leq nH_rI_{n,r}(\rho)+d_r < \sinh^{n-r}(\rho)$ for $\rho > 0$.
Since $I_{n,r}(0)=0$ and $d_r > 0$ we must have $\rho_- > 0$. Such a $\rho_-$ exists only if
$$f(\rho) \coloneqq \sinh^{n-r}(\rho)-(nH_rI_{n,r}(\rho)+d_r)$$
has a zero. We have $f(0) < 0$ and
$$f'(\rho) = \sinh^{n-r-1}(\rho)\cosh(\rho)\left((n-r)-nH_r\tanh^r(\rho)\right).$$
For $0<H_r < (n-r)/n$ the derivative $f'$ is always positive and tends to $+\infty$ as $\rho$ runs to $\infty$,
so $\rho_-$ exists and $\lambda_{H_r,d_r}$ is defined on $[\rho_-,+\infty)$. For $H_r = (n-r)/n$ we have a more subtle behavior. We compute
\begin{align*}
\frac{1}{n-r}\lim_{\rho \to \infty} f'(\rho) & = \lim_{\rho\to\infty} \sinh^{n-r-1}(\rho)\cosh(\rho)(1-\tanh^r(\rho)) \\
& = \lim_{\rho\to\infty} \sinh^{n-r-1}(\rho)\frac{\cosh^r(\rho)-\sinh^r(\rho)}{\cosh^{r-1}(\rho)} \\
& = \lim_{\rho\to\infty} \sinh^{n-r-1}(\rho)\frac{(\cosh(\rho)-\sinh(\rho))\sum_{i=0}^{r-1} \cosh^{r-1-i}(\rho)\sinh^{i}(\rho)}{\cosh^{r-1}(\rho)} \\
& = \lim_{\rho\to\infty} \frac{\sinh^{n-r-1}(\rho)}{\cosh(\rho)+\sinh(\rho)}\sum_{i=0}^{r-1} \tanh^i(\rho).
\end{align*} 
When $n=r+2$ the limit of $f'$ is $r$, and if $n>r+2$ the limit is $+\infty$. 
In these two cases $\rho_-$ exists and $\lambda_{H_r,d_r}$ is defined on $[\rho_-,\infty)$.
The case $n=r+1$ needs to be studied separately, as the limit vanishes. 
The claim is that for any $r$ even we have that $\rho_-$ exists only if $$d_r < \frac{(r-1)!!}{(r-2)!!}\frac{\pi}{2}.$$
Indeed, when $r=2$ we compute
\begin{equation*}
\lim_{\rho\to +\infty} \sinh(\rho)-\int_0^{\rho} \frac{\sinh^2(\sigma)}{\cosh(\sigma)}\,d\sigma -d_2 = \lim_{\rho\to +\infty} (\arctan(\sinh(\rho))-d_2) = \tfrac{\pi}{2}-d_2.
\end{equation*}
In this case, $f$ cannot have a zero if $d_2 \geq \pi/2$. 
To prove the above claim for $r\geq4$, we use \eqref{formula_integral} and find
\begin{align*}
f(\rho) & = \sinh(\rho)-I_{r+1,r}(\rho)-d_r \\
& = \sinh(\rho)\biggl(1+\frac{1}{r-2}\tanh^{r-2}(\rho)+\frac{r-1}{(r-2)(r-4)}\tanh^{r-4}(\rho) \\
& \quad +\dots+\frac{(r-1)(r-3)\cdots 5}{(r-2)(r-4)\cdots 2}\tanh^2(\rho)-\frac{(r-1)!!}{(r-2)!!}\biggr) \\
& \quad +\frac{(r-1)!!}{(r-2)!!}\arctan(\sinh(\rho))-d_r.
\end{align*}
Now Lemma \ref{important_identity} implies that when $\rho \to +\infty$ the sum of the terms into brackets goes to zero,
and the product of $\sinh(\rho)$ with the latter vanishes (one can use the estimates $\sinh(\rho) \approx e^\rho/2$ and $\tanh(\rho) \approx 1-2e^{-2\rho}$ for $\rho \to +\infty$ to see this).
Hence $$\lim_{\rho \to +\infty} f(\rho) = \frac{(r-1)!!}{(r-2)!!}\frac{\pi}{2}-d_r,$$
and the claim is proved. 
Convergence of $\lambda_{H_r,d_r}$ at $\rho_-$ follows by a similar argument as in the proof of Proposition \ref{r-even-n>r1}.

If $H_r > (n-r)/n$ there is a $\tau > 0$ such that $f$ is 
increasing on $(0,\tau)$ and decreasing on $(\tau,+\infty)$. Such a $\tau$ satisfies $\tanh^r(\tau) = (n-r)/nH_r$.
In order to have a well-defined $\lambda_{H_r,d_r}$, we necessarily want $f(\tau) > 0$, which forces the condition
$$d_r < \sinh^{n-r}(\tau)-nH_rI_{n,r}(\tau).$$
Since $f'(\rho_-) > 0$, $f'(\rho_+) < 0$, then $f$ vanishes at $\rho_-$ and $\rho_+$ with order $1$. 
This gives convergence of $\lambda_{H_r,d_r}$ at the boundary points. We have $\lambda_{H_r,d_r}(\rho_-) = 0$,
$\lambda_{H_r,d_r}(\rho_+) > 0$, and $\lim_{\rho \to \rho_{\pm}} \dot{\lambda}_{H_r,d_r}(\rho) = +\infty$ at once.

We finally discuss convexity of $\lambda_{H_r,d_r}$ by proceeding as in the case $d_r \leq 0$.
The sign of the second derivative is determined by the sign of 
$$g(\rho) \coloneqq \frac{\sinh^n(\rho)}{\cosh^r(\rho)}-(n-r)I_{n,r}(\rho) - \frac{d_r(n-r)}{nH_r}.$$
By definition of $\rho_-$, the sign of $g(\rho_-)$ is determined by the sign of $\tanh^r(\rho_-)-(n-r)/nH_r$.
When $nH_r > n-r$, then the above quantity is negative as 
$$\tanh^r(\rho_-)-\frac{n-r}{nH_r} = \tanh^r(\rho_-) - \tanh^r(\tau).$$
Similarly, $g(\rho_+) > 0$. Since $g'(\rho) > 0$, $\lambda_{H_r,d_r}$ has a unique inflection point, and goes from being concave to convex.
If $nH_r \leq n-r$, we have $\lim_{\rho \to +\infty} g(\rho) = -d_r(n-r)/nH_r < 0$ by Remark \ref{integ}. 
But $g$ is an increasing function, so it is always negative, and hence $\lambda_{H_r,d_r}$ is concave.
\end{proof}

There remains to look at the case $n=r$. Set $I_n(\rho) \coloneqq I_{n,n}(\rho) = \int_0^{\rho} \tanh^{n-1}(\tau)\,d\tau$.

\begin{proposition}
\label{r-even-n=r}
Assume $n=r$ even. Then $\lambda_{H_n,d_n}$ is well-defined for $d_n < 1$. 
\begin{enumerate}
\item If $d_n < 0$, then $\lambda_{H_n,d_n}$ is defined on $[\rho_-,\rho_+]$, where
$\rho_-$ is the only solution of $nH_nI_n(\rho)+d_n = 0$, and $\rho_+$ is the only solution of $nH_nI_n(\rho)+d_n = 1$. 
\item If $0 \leq d_n < 1$, then $\lambda_{H_n,d_n}$ is defined on $[0,\rho_+]$, where $\rho_+$
is defined as above. 
\end{enumerate}
Further, $\lambda_{H_n,d_n}$ is increasing and convex in the interior of its domain.
In case $(1)$, $\lambda_{H_n,d_n}(\rho_-) = 0 = \dot{\lambda}_{H_n,d_n}(\rho_-)$, and $\lim_{\rho \to \rho_+} \dot{\lambda}_{H_n,d_n}(\rho) = +\infty$.
In case $(2)$, $\lambda_{H_n,d_n}(0) = 0$, $\dot{\lambda}_{H_n,d_n}(\rho_-) = d_n^{1/n}/(1-d_n^{2/n})^{1/2}$, and $\lim_{\rho \to \rho_+} \dot{\lambda}_{H_n,d_n}(\rho) = +\infty$.
In the particular case $d_n=0$, we also have $\lim_{\rho \to 0} \ddot{\lambda}_{H_n,0}(\rho) = H_n{}^{1/n}$, and if $d_n < 0$ then $\lim_{\rho \to \rho_-} \ddot{\lambda}_{H_r,d_r}(\rho_-) = +\infty$ (Figure \ref{fig:3}).
\end{proposition}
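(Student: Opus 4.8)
\section*{Proof proposal}

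The plan is to specialize the general analysis to the degenerate case $n=r$, where the exponent $\tfrac{2(n-r)}{r}$ appearing in \eqref{lambda} and \eqref{second-derivative} vanishes, so that $\sinh^{\frac{2(n-r)}{r}}(\rho)=1$. Writing $\Phi(\rho) \coloneqq nH_nI_n(\rho)+d_n$, the integrand in \eqref{lambda} becomes $\Phi^{1/n}/\sqrt{1-\Phi^{2/n}}$, and by Remark \ref{signs} (since $r=n$ is even) together with positivity of the radicand, it is well-defined exactly when
$$0 \leq \Phi(\rho) < 1.$$
First I would record the elementary properties of $\Phi$: it is strictly increasing because $I_n$ is (Remark \ref{integ}), with $\Phi(0)=d_n$ and, using $I_n(\rho)\approx\rho$ as $\rho\to+\infty$ from the same remark, $\Phi(\rho)\to+\infty$. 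Hence if $d_n\geq 1$ the upper constraint $\Phi<1$ fails for every $\rho\geq 0$, giving well-definedness only for $d_n<1$; and the equations $\Phi(\rho)=0$ and $\Phi(\rho)=1$ each admit at most one solution.

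From here the domain description follows by inspecting the sign of $d_n$. If $d_n<0$ then $\Phi(0)=d_n<0$, so $\Phi$ vanishes at a unique $\rho_->0$ and reaches $1$ at a unique $\rho_+>\rho_-$, yielding the domain $[\rho_-,\rho_+]$ of case $(1)$; if $0\leq d_n<1$ then $\Phi\geq 0$ already at the origin, so the lower constraint holds on all of $[0,+\infty)$ and only $\rho_+$ (where $\Phi=1$) is relevant, giving $[0,\rho_+]$ in case $(2)$. The normalization $\lambda_{H_n,d_n}(\rho_-)=0$ is built into \eqref{lambda}, and monotonicity is immediate since the integrand is positive in the interior. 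For the boundary behavior of $\dot\lambda_{H_n,d_n}=\Phi^{1/n}/\sqrt{1-\Phi^{2/n}}$ I would read off the stated values directly: $\Phi\to 1$ forces $\dot\lambda\to+\infty$ at $\rho_+$; $\Phi(\rho_-)=0$ gives $\dot\lambda(\rho_-)=0$ in case $(1)$; and $\Phi(0)=d_n$ gives $\dot\lambda(0)=d_n^{1/n}/(1-d_n^{2/n})^{1/2}$ in case $(2)$. That $\lambda_{H_n,d_n}$ nonetheless stays finite at $\rho_+$ follows from the same local argument as in Proposition \ref{r-even-n>r1}: since $\Phi(\rho_+)=1$ with $\Phi'(\rho_+)=nH_n\tanh^{n-1}(\rho_+)$ finite and nonzero, the integrand behaves like a constant times $(\rho_+-\rho)^{-1/2}$, which is integrable.

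For convexity I would specialize \eqref{second-derivative}. Because $n-r=0$, the factor $(n-r)(nH_rI_{n,r}+d_r)$ drops out and $\tfrac{\sinh^n}{\cosh^r}=\tanh^n$, so the bracket collapses to $nH_n\tanh^n(\rho)>0$ and, after simplification,
$$\ddot\lambda_{H_n,d_n}(\rho)=\frac{H_n\tanh^{n-1}(\rho)}{\Phi(\rho)^{\frac{n-1}{n}}\bigl(1-\Phi(\rho)^{\frac{2}{n}}\bigr)^{\frac{3}{2}}},$$
which is manifestly positive throughout the interior, so $\lambda_{H_n,d_n}$ is convex without any auxiliary function. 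The two endpoint limits of $\ddot\lambda$ then come from this formula: when $d_n<0$ one has $\Phi(\rho_-)=0$, so the denominator vanishes and $\ddot\lambda\to+\infty$; when $d_n=0$ one uses the small-$\rho$ asymptotics $nI_n(\rho)\approx\rho^n$ (Remark \ref{integ}) and $\tanh^{n-1}(\rho)\approx\rho^{n-1}$, whence $\Phi\approx H_n\rho^n$ and the $\rho^{n-1}$ factors cancel, leaving $\lim_{\rho\to 0}\ddot\lambda_{H_n,0}(\rho)=H_n^{1/n}$.

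Compared with the cases $n>r$, this one is structurally simpler: here $(n-r)/n=0<H_n$ automatically, so there is no threshold dichotomy on $H_n$ and one is always in the bounded-domain regime, with the upper bound on $\Phi$ coming from the constant $\sinh^{0}=1$ rather than a growing power of $\sinh$. The only genuinely delicate points are the integrability of $\lambda_{H_n,d_n}$ at the free boundary $\rho_+$ and the finite limit of $\ddot\lambda$ at the origin when $d_n=0$; both are resolved by the local expansions above, so I do not expect a serious obstacle.
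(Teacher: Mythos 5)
Your proposal is correct and follows essentially the same route as the paper: the constraint $0\leq nH_nI_n+d_n<1$ forcing $d_n<1$, monotonicity of $nH_nI_n+d_n$ giving the unique endpoints $\rho_\pm$, the square-root comparison from Proposition \ref{r-even-n>r1} for finiteness at $\rho_+$, and the specialization of \eqref{second-derivative} for convexity and the endpoint limits of $\ddot\lambda$. You simply carry out explicitly (e.g.\ the closed form $\ddot\lambda_{H_n,d_n}=H_n\tanh^{n-1}(\rho)\,\Phi^{-(n-1)/n}(1-\Phi^{2/n})^{-3/2}$ and the small-$\rho$ expansion) what the paper leaves as a terse reference to its earlier arguments.
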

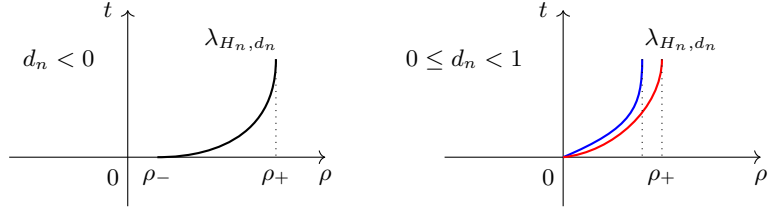
\begin{figure}[htb]
\begin{center}
\begin{tikzpicture}[scale=1.3]
   \draw[->] (-1.2,0) -- (2,0);
  \draw[->] (0,-0.5) -- (0,1.5);
  \draw[thick] (0.3,0) .. controls (0.5,0) and (1.5,0.01) .. (1.5,1);
  \draw[dotted] (1.5,1) -- (1.5,0);
  \node at (-0.7,1) {$d_n < 0$};
  \node at (0.3,-0.2) {$\rho_-$};
  \node at (-0.15,-0.2) {$0$};
  \node at (1.5,-0.2) {$\rho_+$};
  \node at (1.15,1.2) {$\lambda_{H_n,d_n}$};
  \node at (2,-0.2) {$\rho$};
  \node at (-0.2,1.5) {$t$};
\end{tikzpicture}
\qquad 
\begin{tikzpicture}[scale=1.3]
  \draw[->] (-1.2,0) -- (2,0);
  \draw[->] (0,-0.5) -- (0,1.5);
  \draw[thick,blue] (0,0) .. controls (0.7,0.3) and (0.8,0.5) .. (0.8,1);
  \draw[thick,red] (0,0) .. controls (0.5,0.05) and (1,0.5) .. (1,1);
  \draw[dotted] (1,1) -- (1,0);
   \draw[dotted] (0.8,1) -- (0.8,0);
  \node at (-1,1) {$0 \leq d_n < 1$};
  \node at (-0.15,-0.2) {$0$};
  \node at (1,-0.2) {$\rho_+$};
  \node at (1.2,1.2) {$\lambda_{H_n,d_n}$};
  \node at (2,-0.2) {$\rho$};
  \node at (-0.2,1.5) {$t$};
\end{tikzpicture}
\end{center}
\caption{Behavior of $\lambda_{H_n,d_n}$ for $n$ even and $H_n > 0$.
When $d_n$ is non-negative, we distinguish two cases, i.e.\ $d_n = 0$ (red), and $0 < d_n < 1$ (blue).}
\label{fig:3}
\end{figure}
\begin{proof}
Our usual constraint becomes
$$0 \leq nH_nI_n(\rho)+d_n < 1.$$
Hence necessarily $d_n < 1$. If $d_n < 0$ there are positive numbers $\rho_-,\rho_+$ such that $nH_nI_n(\rho_-) +d_n=0$ and $nH_nI_n(\rho_+)+d_n = 1$,
and $\lambda_{H_n,d_n}$ is defined on $[\rho_-,\rho_+)$. Clearly $\dot{\lambda}_{H_n,d_n}(\rho_-) = 0$. 
If $0 \leq d_n < 1$, then $\lambda_{H_n,d_n}$ is defined on $[0,\rho_+)$.
We have $\dot{\lambda}_{H_n,d_n}(0) = d_n^{1/n}/(1-d_n^{2/n})^{1/2}$. 
The same method as in the proof of Proposition~\ref{r-even-n>r1} shows that in both cases  $\lambda_{H_n,d_n}$ is finite at  $\rho_+$.
The expression of $\ddot{\lambda}_{H_r,d_r}$ in \eqref{second-derivative} for $n=r$ implies convexity of the graphs at once.
Continuity of the second derivative at the origin for $d_n=0$ follows by an explicit calculation, cf.\ \eqref{second-derivative} and Remark \ref{integ}.
\end{proof}

We now study the regularity of the $H_r$-hypersurface generated by rotating the graph of $\lambda_{H_r,d_r}$, as described at the beginning of Section \ref{rotational-surfaces}.
Then we will proceed with the classification result.

\begin{proposition}
\label{prop:c2-reg}
Let $n\geq r$, $r$ even. 
Then the hypersurface generated by the curve defined by $\lambda_{H_r,d_r}$ is of class $C^2$ at $\rho = \rho_+$, when the latter exists, and it is of class $C^2$ at $\rho=\rho_-$ if and only if $n>r$ and $d_r\geq 0$ or $n=r$ and $d_n=0$. 
When $n=r$ and $d_n>0$, it has a conical singularity at $\rho=0$. If $n\geq r$ and $d_r < 0$, it has cuspidal singularities at $\rho=\rho_-$.
\end{proposition}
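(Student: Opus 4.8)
The plan is to reduce the $C^2$-regularity of the rotational hypersurface to three local models for its profile curve $\gamma(\rho)=(\tanh(\rho/2),\lambda_{H_r,d_r}(\rho))$ in the vertical plane $V$: a smooth cap meeting the rotation axis $\{\rho=0\}$ orthogonally, a point off the axis where $\gamma$ has a \emph{vertical} tangent, and a point off the axis where $\gamma$ has a \emph{horizontal} tangent but diverging geodesic curvature. Which model a boundary value $\rho=\rho_\pm$ falls into is dictated entirely by the limits of $\dot\lambda$ and $\ddot\lambda$ there, which are recorded in Propositions~\ref{r-even-n>r1}--\ref{r-even-n=r}. In particular, $C^2$-regularity at an off-axis point forces the principal curvatures in \eqref{data} to stay finite, while at an axis point it forces $\dot\lambda\to 0$ together with a finite limit of $\ddot\lambda$.

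First I would treat the endpoint $\rho_+$, which always corresponds to a vertical tangent since $\dot\lambda\to+\infty$ there by Propositions~\ref{r-even-n>r1}(2), \ref{r_even-2}(3), and \ref{r-even-n=r}. As $\rho_+>0$, the profile stays away from the axis and the only issue is boundedness of $k_n=\ddot\lambda/(1+\dot\lambda^2)^{3/2}$. The key computation is that, substituting \eqref{second-derivative} and using \eqref{first-integ} to write $(1+\dot\lambda^2)^{3/2}=\sinh^{3(n-r)/r}(\rho)/(\sinh^{2(n-r)/r}(\rho)-(nH_rI_{n,r}(\rho)+d_r)^{2/r})^{3/2}$, the singular factor $(\sinh^{2(n-r)/r}(\rho)-(nH_rI_{n,r}(\rho)+d_r)^{2/r})^{3/2}$ cancels exactly, leaving
\[
k_n = \frac{\cosh(\rho)\left(nH_r\sinh^n(\rho)/\cosh^r(\rho)-(n-r)(nH_rI_{n,r}(\rho)+d_r)\right)}{r\,(nH_rI_{n,r}(\rho)+d_r)^{(r-1)/r}\sinh^{n/r}(\rho)},
\]
which is finite at $\rho_+$ because $nH_rI_{n,r}(\rho_+)+d_r=\sinh^{n-r}(\rho_+)>0$. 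Since $k_n$ and $k_1=\dots=k_{n-1}=\cotgh(\rho_+)$ are finite and continuous up to $\rho_+$, the profile is a regular $C^2$ curve with vertical tangent, and the hypersurface together with its reflection across $t=\lambda_{H_r,d_r}(\rho_+)$ is $C^2$ there.

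Next I would run through $\rho_-$ to establish the ``if and only if''. When $d_r=0$ and $n>r$, or $n=r$ and $d_n=0$, Propositions~\ref{r-even-n>r1} and \ref{r-even-n=r} give $\rho_-=0$, $\dot\lambda(0)=0$, and a finite $\lim_{\rho\to0}\ddot\lambda=H_r{}^{1/r}$; passing to the graph $t=\lambda_{H_r,d_r}(2\,\mathrm{artanh}(|y|))$ over the Poincar\'e-ball coordinate $y=\tanh(\rho/2)\xi$, the vanishing of $\dot\lambda(0)$ kills the first derivative and the standard surface-of-revolution Hessian computation recombines the direction-dependent terms into $\ddot\lambda(0)\,\delta_{ij}$, so the cap is $C^2$ (indeed umbilic). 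When $d_r>0$ and $n>r$, Proposition~\ref{r_even-2} gives $\rho_->0$ with $\dot\lambda\to+\infty$, i.e.\ exactly the vertical-tangent model already settled at $\rho_+$, whence $C^2$ again. This proves the ``if'' direction.

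Finally I would identify the two singular regimes, giving the ``only if'' direction. For $n=r$ and $d_n>0$, Proposition~\ref{r-even-n=r} yields $\rho_-=0$ but $\dot\lambda(0)=d_n^{1/n}/(1-d_n^{2/n})^{1/2}\neq0$, so the profile meets the axis at a nonzero angle and the graph over $y$ fails to be differentiable in a direction-independent way at the origin, producing a conical singularity. For $d_r<0$ and any $n\geq r$ one has $\rho_->0$, $\dot\lambda(\rho_-)=0$, and $\ddot\lambda\to+\infty$; since $nH_rI_{n,r}+d_r$ vanishes to first order at $\rho_-$ (its derivative $nH_rI_{n,r}'(\rho_-)>0$ by Remark~\ref{integ}), \eqref{first-integ} gives $\dot\lambda\sim C(\rho-\rho_-)^{1/r}$ and hence $\lambda_{H_r,d_r}-\lambda_{H_r,d_r}(\rho_-)\sim C'(\rho-\rho_-)^{(r+1)/r}$ with $(r+1)/r\in(1,2)$, so reflecting across $t=\lambda_{H_r,d_r}(\rho_-)$ produces two branches $t\approx\pm C'(\rho-\rho_-)^{(r+1)/r}$ tangent to the horizontal direction with diverging curvature, that is, a cuspidal singularity along the sphere $\rho=\rho_-$. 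The main obstacle I anticipate is the axis analysis: carefully justifying that $\dot\lambda(0)=0$ with finite $\ddot\lambda(0)$ is not merely necessary but sufficient for genuine $C^2$ (rather than only $C^1$) regularity of the surface of revolution, and conversely that $\dot\lambda(0)\neq0$ forces a cone rather than a milder singularity.
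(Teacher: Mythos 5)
Your proposal is correct and follows essentially the same route as the paper: in each case the regularity question is reduced, via \eqref{data}, \eqref{first-integ} and \eqref{second-derivative} and the limits of $\dot{\lambda}_{H_r,d_r}$ and $\ddot{\lambda}_{H_r,d_r}$ recorded in Propositions \ref{r-even-n>r1}, \ref{r_even-2} and \ref{r-even-n=r}, to checking whether the principal curvatures (equivalently $|A|^2$) stay bounded at $\rho_\pm$, with the vertical-tangent points, the axis cap, the conical point and the cusp treated exactly as in the paper's proof. Your added details — the explicit cancellation giving a closed formula for $k_n$ at $\rho_+$, the Hessian computation in Poincar\'e-ball coordinates at the axis, and the cusp exponent $(r+1)/r$ for $d_r<0$ — are correct refinements of the same argument rather than a different method.
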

\begin{proof}
Regularity to second order of the hypersurface generated by the graph of $\lambda_{H_r,d_r}$ is proved by showing that the second fundamental form $A$ is bounded.

For any choice of $n\geq r$, $H_r$ and $d_r$ for which $\rho_+$ exists, we have that $\rho_+>0$ and $\lim_{\rho\rightarrow\rho_+}\dot{\lambda}_{H_r,d_r}(\rho)=+\infty$.
By \eqref{data}, for any $i=1,\dots,n-1$ we have that
$$\lim_{\rho\rightarrow\rho_+}k_i(\rho)=\cotgh(\rho_+).$$
By definition of $\rho_+$, combining \eqref{data} and \eqref{second-derivative} one finds
$$\lim_{\rho\rightarrow\rho_+}k_n(\rho)=\frac{\cotgh(\rho_+)}{r}(nH_r\tanh(\rho_+)-(n-r)).$$
It follows that $\lim_{\rho\rightarrow\rho_+}|A|^2(\rho)$ exists and is finite.

Assume now that $n>r$ and $d_r>0$, then $\rho_->0$ and $\lim_{\rho\rightarrow\rho_-}\dot{\lambda}_{H_r,d_r}(\rho)=+\infty$.
Therefore $\lim_{\rho\rightarrow\rho_-}|A|^2(\rho)$ exists and is finite by arguing as above.

When $d_r=0$ we have $\rho_-=0$. By Remark \ref{integ}, \eqref{data} and \eqref{second-derivative}, as $\rho\rightarrow 0$ we get the estimates
\begin{equation*}
\cotgh(\rho) \approx \rho^{-1}, \qquad
\dot{\lambda}_{H_r,d_r}(\rho) \approx H_r^{\frac 1r}\rho, \qquad
\ddot{\lambda}_{H_r,d_r}(\rho) \approx H_r^{\frac 1r}.
\end{equation*}
For any $i=1,\dots,n$ it follows that
$$
\lim_{\rho\rightarrow 0}k_i(\rho)=H_r^{\frac 1r},
$$
and $\lim_{\rho\rightarrow0}|A|^2(\rho)$ exists and is finite in this case as well.

In the case $n\geq r$ and $d_r<0$ we have  $\rho_->0$, $\dot{\lambda}_{H_r,d_r}(\rho_-)=0$, but $\lim_{\rho\rightarrow\rho_-}\ddot{\lambda}_{H_r,d_r}(\rho)=+\infty$. Hence $|A|^2$ blows up at $\rho_-$ because $\lim_{\rho\rightarrow\rho_-}k_n(\rho)=+\infty$. 
Moreover, it is clear that by reflecting the hypersurface generated by the graph of $\lambda_{H_r,d_r}$ across the slice ${\mathbb H}^n\times\{0\}$ one gets cuspidal singularities along the intersection with ${\mathbb H}^n\times\{0\}.$

Finally, when $n=r$ and $0<d_n<1$, by Proposition \ref{r-even-n=r} we have that $\rho_-=0$ and $$\dot\lambda_{H_n,d_n}(0)=\frac{d_n^{\frac 1n}}{(1-d_n^{\frac 2n})^{\frac 12}}>0.$$
So the hypersurface generated by the graph of $\lambda_{H_n,d_n}$ has a  conical singularity in $\rho=0$.
\end{proof}

We now classify rotational $H_r$-hypersurfaces for $r$ even based on the above arguments. 
We recover results by Elbert--Sa Earp \cite[Section 6]{elb-earp} and de Lima--Manfio--dos Santos \cite[Theorem 1 and 2]{lima}.
We recall that a \emph{slice} is  any subspace $\mathbb H^n\times\{t\} \subset \mathbb H^n \times \mathbb R$, and by its \emph{origin} we mean its intersection with the $t$-axis. 

\begin{theorem}
\label{str-thm1}
Assume $r$ even, $n>r$, and $d_r < 0$. 
By reflecting the rotational hypersurface given by the graph of $\lambda_{H_r,d_r}$ across suitable  slices,  we get a  non-compact embedded $H_r$-hypersurface.

\begin{enumerate}
\item If $0<H_r \leq (n-r)/n$, the hypersurface generated by the graph of $\lambda_{H_r,d_r}$ together with its reflection across the slice ${\mathbb H}^n\times\{0\}$ is a singular annulus. 
Its singular set is made of cuspidal points along a sphere of radius $\rho_-$ centered at the origin of the slice $\mathbb H^n \times \{0\}.$
\item If $H_r > (n-r)/n$, then the hypersurface generated by the graph of $\lambda_{H_r,d_r}$, together with its reflections across the  slices $\mathbb H^n \times \{k\lambda_{H_r,d_r}(\rho_+)\}$, $k \in \mathbb Z$, gives a singular onduloid.
Its singular set is made of cuspidal points along spheres of radius $\rho_-$ centered at the origin of the slices $\mathbb H^n \times \{2k\lambda_{H_r,d_r}(\rho_+)\}$, $k \in \mathbb Z$.
\end{enumerate}
\end{theorem}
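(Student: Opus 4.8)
The plan is to assemble the qualitative and regularity information already established into a single global geometric description, performing the reflections so that the $r$-th mean curvature is preserved. The starting point is Proposition \ref{r-even-n>r1}: for $d_r<0$ the function $\lambda_{H_r,d_r}$ is increasing and convex with $\lambda_{H_r,d_r}(\rho_-)=0=\lim_{\rho\to\rho_-}\dot\lambda_{H_r,d_r}$, the domain being $[\rho_-,+\infty)$ with $\lambda_{H_r,d_r}$ unbounded in case (1), and $[\rho_-,\rho_+]$ with $\lim_{\rho\to\rho_+}\dot\lambda_{H_r,d_r}=+\infty$ in case (2). Proposition \ref{prop:c2-reg} records that the generating hypersurface is of class $C^2$ at $\rho_+$ and has a cuspidal singularity at $\rho=\rho_-$. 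Since every reflection across a slice $\mathbb H^n\times\{c\}$ is an ambient isometry (the slices are totally geodesic in $\mathbb H^n\times\mathbb R$), it preserves the principal curvatures up to sign and hence the value of $H_r$; so each reflected piece is again an $H_r$-hypersurface and only the junctions require care.

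For case (1) I would note that $d_r<0$ forces $\rho_->0$, so the profile curve $\rho\mapsto(\tanh(\rho/2)\xi,\lambda_{H_r,d_r}(\rho))$ lies in $\{t\ge 0\}$ and stays at distance $\rho_-$ from the rotation axis. Reflecting across $\mathbb H^n\times\{0\}$, the slice reached at $\rho_-$ where the curve has horizontal tangent, produces the mirror piece in $\{t\le 0\}$. A local analysis of \eqref{first-integ} near $\rho_-$ gives $\lambda_{H_r,d_r}(\rho)\approx C(\rho-\rho_-)^{(r+1)/r}$ with exponent in $(1,2)$, so the union of the two branches $t=\pm\lambda_{H_r,d_r}(\rho)$ is a simple curve with a cusp at $(\rho_-,0)$. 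Rotating this curve, which never meets the axis, yields an embedded hypersurface diffeomorphic to $S^{n-1}\times\mathbb R$ away from the cuspidal edge $\{\rho=\rho_-\}$, a geodesic sphere of radius $\rho_-$ in $\mathbb H^n\times\{0\}$; non-compactness is immediate from the unboundedness of $\lambda_{H_r,d_r}$.

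For case (2) I would start from the fundamental piece over $[\rho_-,\rho_+]$, joining $(\rho_-,0)$ to $(\rho_+,\lambda_{H_r,d_r}(\rho_+))$, and reflect iteratively across the slices $\mathbb H^n\times\{k\lambda_{H_r,d_r}(\rho_+)\}$, $k\in\mathbb Z$. At odd multiples the junction occurs at $\rho_+$, where the tangent plane is vertical (equivalently, the unit normal becomes horizontal as $\dot\lambda_{H_r,d_r}\to+\infty$) and the surface is $C^2$, so the reflected piece glues in a $C^2$ fashion by a Schwarz-type reflection across the orthogonal slice. At even multiples the junction occurs at $\rho_-$, reproducing the cuspidal edge of case (1) along the sphere of radius $\rho_-$ in $\mathbb H^n\times\{2k\lambda_{H_r,d_r}(\rho_+)\}$. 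Since $\lambda_{H_r,d_r}$ is monotone on the fundamental piece, $\rho$ is a well-defined continuous function of $t$ oscillating between $\rho_-$ and $\rho_+<\infty$; together with $\rho_->0$, this exhibits the full profile as a graph over the $t$-axis, so the rotated object is an embedded, periodic, hence non-compact, singular onduloid.

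The main obstacle I anticipate is making the reflection gluing rigorous, that is, verifying that the Schwarz-type reflection at $\rho_+$ genuinely yields a $C^2$ hypersurface and not merely a $C^1$ one. This amounts to checking that the surface meets the slice $\mathbb H^n\times\{\lambda_{H_r,d_r}(\rho_+)\}$ orthogonally and that the principal curvatures computed from both sides agree, which follows from the finiteness of $\lim_{\rho\to\rho_+}k_i$ established in Proposition \ref{prop:c2-reg} together with \eqref{data}. A secondary point is to confirm that the reflection at $\rho_-$ produces exactly a cuspidal edge of the stated type rather than a milder or worse singularity; the local model $t\approx C(\rho-\rho_-)^{(r+1)/r}$ is precisely what pins this down.
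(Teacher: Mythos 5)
Your proposal is correct and follows essentially the same route as the paper, which derives Theorem \ref{str-thm1} by combining the qualitative description of $\lambda_{H_r,d_r}$ in Proposition \ref{r-even-n>r1} with the regularity/singularity analysis of Proposition \ref{prop:c2-reg} ($C^2$ at $\rho_+$, cusps at $\rho_-$ for $d_r<0$) and then performing the stated reflections; your local model $\lambda_{H_r,d_r}(\rho)\approx C(\rho-\rho_-)^{(r+1)/r}$ is a correct and slightly more explicit justification of the cuspidal nature of the singular set than the paper records.
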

\begin{theorem}
\label{str-thm2}
Assume $r$ even, $n>r$, and $d_r = 0$. 
Then the rotational hypersurface given by the graph of $\lambda_{H_r,0}$ is a complete embedded $H_r$-hypersurface, possibly after reflection across a suitable slice.
\begin{enumerate}
\item If $0<H_r \leq (n-r)/n$, the hypersurface generated by the graph of $\lambda_{H_r,0}$  is an  entire graph of class $C^2$ tangent to the  slice $\mathbb H^n \times \{0\}$ at the origin.
\item If $H_r > (n-r)/n$, the hypersurface generated by the graph of $\lambda_{H_r,0}$, together with its reflection across the slice $\mathbb H^n \times \{\lambda_{H_r,0}(\rho_+)\}$, is a class $C^2$ sphere.
\end{enumerate}
\end{theorem}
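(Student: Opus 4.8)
The plan is to read off all the analytic information about $\lambda_{H_r,0}$ from Proposition \ref{r-even-n>r1} (specialized to $d_r=0$, so that $\rho_-=0$), combine it with the $C^2$-regularity statement of Proposition \ref{prop:c2-reg}, and then deduce the topological and geometric conclusions from the shape of the profile curve together with the reflections. The one geometric fact I would record at the outset is that, in the Poincar\'e model, the parameter $\rho$ equals the hyperbolic distance from the origin: a point at Euclidean distance $\tanh(\rho/2)$ from the center lies at hyperbolic distance $2\operatorname{arctanh}(\tanh(\rho/2))=\rho$. Hence, as $\rho$ runs over an interval $[0,b)$, the radial coordinate $\tanh(\rho/2)$ sweeps out the geodesic ball of radius $b$ in $\mathbb H^n$, and $b=+\infty$ corresponds to all of $\mathbb H^n$.

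For part $(1)$, Proposition \ref{r-even-n>r1} gives that $\lambda_{H_r,0}$ is defined on $[0,+\infty)$, is increasing and convex, is unbounded, and satisfies $\lambda_{H_r,0}(0)=0=\lim_{\rho\to 0}\dot\lambda_{H_r,0}(\rho)$. Since $\rho$ is the hyperbolic radius, the rotational hypersurface is the vertical graph $t=u(x)$ over all of $\mathbb H^n$ of the radial function $u(x):=\lambda_{H_r,0}(\operatorname{dist}(o,x))$, hence entire and automatically embedded; it is $C^2$ at the axis $\rho=0$ by Proposition \ref{prop:c2-reg} (as $n>r$ and $d_r=0$). The vanishing of $\dot\lambda_{H_r,0}$ at $\rho=0$ forces a horizontal tangent there, i.e.\ tangency to the slice $\mathbb H^n\times\{0\}$ at the origin. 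Completeness is immediate because an entire graph over the complete manifold $\mathbb H^n$ is properly embedded, hence complete.

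For part $(2)$, Proposition \ref{r-even-n>r1} gives that $\lambda_{H_r,0}$ is defined on $[0,\rho_+]$, increasing and convex, with $\lambda_{H_r,0}(0)=0=\lim_{\rho\to 0}\dot\lambda_{H_r,0}(\rho)$ and $\lim_{\rho\to\rho_+}\dot\lambda_{H_r,0}(\rho)=+\infty$. Thus the profile curve leaves the axis horizontally at the origin of $\mathbb H^n\times\{0\}$ and reaches the slice $\mathbb H^n\times\{\lambda_{H_r,0}(\rho_+)\}$ with vertical tangent. Reflecting the generated hypersurface across this slice—an ambient isometry, so the constant value of $H_r$ is preserved—produces a mirror copy whose profile returns to the axis at height $2\lambda_{H_r,0}(\rho_+)$, again horizontally. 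I would then argue that the union is a topological sphere: each of the two graphical halves is an embedded disk closing up smoothly at its pole on the $t$-axis (at heights $0$ and $2\lambda_{H_r,0}(\rho_+)$), and they are glued along the equatorial sphere $\{\rho=\rho_+\}$. Its global $C^2$ regularity follows by collecting the regularity statements of Proposition \ref{prop:c2-reg}: $C^2$ at each pole $\rho=0$ (as $n>r$, $d_r=0$) and $C^2$ across the equator $\rho=\rho_+$.

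The hard part will be the clean verification in part $(2)$ that the reflected surface is genuinely $C^2$ at the equator $\rho=\rho_+$, where the profile is vertical and the graphical description degenerates; this is precisely where the boundedness of the second fundamental form from Proposition \ref{prop:c2-reg} does the work, and I would check that the principal curvatures computed from the two sides via \eqref{data} agree and that no lower-order mismatch occurs at the gluing slice. Everything else—embeddedness, tangency at the origin, completeness, and the entire-graph/sphere dichotomy governed by the threshold $H_r=(n-r)/n$—is a direct consequence of the monotonicity and convexity of $\lambda_{H_r,0}$ already established, so no computation beyond invoking Propositions \ref{r-even-n>r1} and \ref{prop:c2-reg} should be needed.
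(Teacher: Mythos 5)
Your proposal is correct and follows essentially the same route as the paper: the paper states Theorem \ref{str-thm2} without a separate proof, presenting it as a direct consequence of Proposition \ref{r-even-n>r1} (specialized to $d_r=0$, hence $\rho_-=0$) for the shape of the profile and of Proposition \ref{prop:c2-reg} for the $C^2$-regularity at the pole and at $\rho_+$, which is exactly how you assemble the argument. Your added remarks — that $\rho$ is the hyperbolic distance so the domain $[0,+\infty)$ yields an entire graph, and that the reflection across the slice matches the principal curvatures at the equator because $|A|^2$ stays bounded there — are correct and only make explicit what the paper leaves implicit.
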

\begin{theorem}
\label{str-thm3}
Assume $r$ even, $n>r$, and $d_r > 0$. 
By reflecting the rotational hypersurface given by the graph of $\lambda_{H_r,d_r}$ across suitable slices, we get a complete non-compact embedded $H_r$-hypersurface.
\begin{enumerate}
\item If $0<H_r \leq (n-r)/n$, the hypersurface generated by the graph of $\lambda_{H_r,d_r}$, together with 
its reflection across the slice ${\mathbb H}^n\times\{0\}$, is a class $C^2$ annulus. When $n=r+1$ and $H_r = 1/(r+1)$, the same holds, provided that $d_r$ is smaller than $(r-1)!!\pi/2(r-2)!!$.
\item If $H_r > (n-r)/n$, the hypersurface generated by the graph of $\lambda_{H_r,d_r}$ together with its reflections across the  slices $\mathbb H^n \times \{k\lambda_{H_r,d_r}(\rho_+)\}$, $k \in \mathbb Z$, is a class $C^2$ onduloid.
\end{enumerate}
\end{theorem}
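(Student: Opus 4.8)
The plan is to assemble the global pictures by gluing together copies of the single building block (the graph of $\lambda_{H_r,d_r}$) whose analytic properties were already established in Proposition \ref{r_even-2} and whose regularity at the endpoints is governed by Proposition \ref{prop:c2-reg}. The strategy is essentially the same in both cases: identify the profile curve on its domain, reflect it across appropriate horizontal slices to remove the vertical-tangency endpoints, and then verify that the resulting closed-up object is embedded, complete, and of class $C^2$ where two copies meet.

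For case (1), with $0 < H_r \leq (n-r)/n$ and $d_r > 0$, Proposition \ref{r_even-2}(1)--(2) tells us that $\lambda_{H_r,d_r}$ is defined on $[\rho_-,+\infty)$ with $\rho_- > 0$, that it satisfies $\lambda_{H_r,d_r}(\rho_-) = 0$ and $\lim_{\rho \to \rho_-}\dot{\lambda}_{H_r,d_r}(\rho) = +\infty$, and that it is unbounded and concave. First I would observe that because $\rho_- > 0$ and $d_r > 0$, Proposition \ref{prop:c2-reg} guarantees the hypersurface is of class $C^2$ at $\rho = \rho_-$ (this is exactly the case $n > r$, $d_r \geq 0$). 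The vertical tangency at $\rho_-$ means the graph meets the slice $\mathbb H^n \times \{0\}$ orthogonally along the sphere of radius $\rho_-$; reflecting across $\mathbb H^n \times \{0\}$ therefore produces a $C^2$ hypersurface across the gluing locus. Since the profile extends to $\rho \to +\infty$ without further endpoints and the two reflected pieces have no other intersection (one lives in $t \geq 0$, the other in $t \leq 0$, meeting only along the sphere $\{\rho = \rho_-\}$), the union is an embedded $C^2$ annulus. The subtle subcase $n = r+1$, $H_r = 1/(r+1)$ requires the bound $d_r < (r-1)!!\,\pi/2(r-2)!!$ solely to ensure $\rho_-$ exists, which is precisely the content of Proposition \ref{r_even-2}(2); under that hypothesis the preceding argument applies verbatim.

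For case (2), with $H_r > (n-r)/n$, Proposition \ref{r_even-2}(3) gives a profile on a bounded interval $[\rho_-,\rho_+]$ with $\rho_- > 0$, vanishing at $\rho_-$, positive at $\rho_+$, and with $\dot{\lambda}_{H_r,d_r} \to +\infty$ at \emph{both} endpoints; moreover it is increasing with a single inflection, going from concave to convex. Here I would exploit the vertical tangencies at both $\rho_\pm$: Proposition \ref{prop:c2-reg} (case $d_r > 0$) yields $C^2$ regularity at each endpoint, so reflecting successively across the slices $\mathbb H^n \times \{k\,\lambda_{H_r,d_r}(\rho_+)\}$ for $k \in \mathbb Z$ glues alternating copies into a vertically periodic $C^2$ profile whose value oscillates between the heights $k\,\lambda_{H_r,d_r}(\rho_+)$, with the $\rho$-coordinate oscillating between $\rho_-$ and $\rho_+$. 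Rotating this periodic profile yields the onduloid. Embeddedness follows because each fundamental piece is a monotone graph over $[\rho_-,\rho_+]$ and the reflections match up tangentially without self-intersection, the whole surface being a tube of varying radius about the axis $\{0\}\times\mathbb R$.

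The main obstacle I anticipate is not the topological gluing but the verification of $C^2$ matching at the reflection slices: one must confirm that the second fundamental form computed from one copy agrees with that of its mirror image across the slice, i.e.\ that the one-sided limits of the principal curvatures $k_1,\dots,k_n$ from \eqref{data} coincide and are finite at $\rho_\pm$. This is exactly where the finiteness of $\lim_{\rho \to \rho_\pm}|A|^2$ established in Proposition \ref{prop:c2-reg} does the work: since reflection across a horizontal slice preserves the $k_i$ (the vertical tangency makes the normal horizontal there, so the reflection acts as an isometry fixing the common boundary sphere and reversing only the $t$-direction), the limits automatically match and no further computation is needed. The remaining routine checks—monotonicity giving the graph/tube structure, and the periodicity bookkeeping for the onduloid—are direct consequences of the qualitative behavior recorded in Proposition \ref{r_even-2} and Figure \ref{fig:2}.
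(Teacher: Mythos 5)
Your proposal is correct and follows exactly the route the paper intends: Theorem \ref{str-thm3} is presented there without a separate proof, as an immediate consequence of the qualitative description of $\lambda_{H_r,d_r}$ in Proposition \ref{r_even-2} and the boundedness of $|A|^2$ at $\rho_\pm$ from Proposition \ref{prop:c2-reg}, assembled by reflection just as you describe. Your observation that the vertical tangency makes the reflection an isometry matching the one-sided limits of the principal curvatures is precisely the point that makes the $C^2$ gluing work.
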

\begin{theorem}
\label{str-thm4}
Assume $n=r$ even and $H_n > 0$. Then the $H_n$-hypersurface generated by the graph of $\lambda_{H_n,d_n},$ together with its reflection across the slice $\mathbb H^n \times \{\lambda_{H_n,d_n}(\rho_+)\}$, is a class $C^2$ sphere if $d_n=0$, and a peaked sphere if 
$0 < d_n < 1$. If $d_n < 0$ then the $H_n$-hypersurface generated by the graph of $\lambda_{H_n,d_n}$, together with its reflections across the slices $\mathbb H^n \times \{k\lambda_{H_n,d_n}(\rho_+)\}$, $k \in \mathbb Z$, gives a singular onduloid.
Its singular set is made of cuspidal points along spheres of radius $\rho_-$ centered at the origin of the slices $\mathbb H^n \times \{2k\lambda_{H_n,d_n}(\rho_+)\}$, $k \in \mathbb Z$.
\end{theorem}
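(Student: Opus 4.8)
The plan is to assemble the three cases from the behavior of $\lambda_{H_n,d_n}$ recorded in Proposition \ref{r-even-n=r} and its regularity established in Proposition \ref{prop:c2-reg}, reading off the global topology and the singular set directly from the reflection procedure described at the beginning of this section. The common mechanism in all cases is the behavior at $\rho=\rho_+$: there $\lim_{\rho\to\rho_+}\dot\lambda_{H_n,d_n}=+\infty$, so the profile has a vertical tangent, and by Proposition \ref{prop:c2-reg} the generated hypersurface is $C^2$ at $\rho_+$. Consequently, reflecting across the horizontal slice $\mathbb H^n\times\{\lambda_{H_n,d_n}(\rho_+)\}$ glues two copies $C^2$-smoothly along the sphere of radius $\rho_+$. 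I would establish this gluing first, noting that a horizontal reflection is an ambient isometry under which, for $r=n$ even, the $r$-th mean curvature equation is preserved (Remark \ref{signs}), so the reflected piece is again an $H_n$-hypersurface.

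For $d_n=0$ one has $\rho_-=0$ and $\dot\lambda_{H_n,0}(0)=0$, so the profile meets the $t$-axis orthogonally and, by Proposition \ref{prop:c2-reg}, the surface is $C^2$ at the pole. Reflecting across $\mathbb H^n\times\{\lambda_{H_n,0}(\rho_+)\}$ closes the profile into a curve running from the bottom pole at height $0$, through the equator of radius $\rho_+$, to the top pole at height $2\lambda_{H_n,0}(\rho_+)$; its rotation is a topological sphere that is $C^2$ at both poles and across the equator, hence a $C^2$ $H_n$-sphere. For $0<d_n<1$ the construction is identical, but now $\dot\lambda_{H_n,d_n}(0)=d_n^{1/n}/(1-d_n^{2/n})^{1/2}>0$, so Proposition \ref{prop:c2-reg} gives a conical singularity at each pole, producing a peaked sphere that is $C^2$ away from the poles. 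In both cases embeddedness is immediate because the profile is a strictly monotone graph over the $t$-interval, so each slice meets the surface in a single rotational sphere.

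For $d_n<0$ the profile is defined on $[\rho_-,\rho_+]$ with $\rho_->0$, and satisfies $\dot\lambda_{H_n,d_n}(\rho_-)=0$ together with $\lim_{\rho\to\rho_-}\ddot\lambda_{H_n,d_n}=+\infty$, i.e.\ a cuspidal neck of radius $\rho_-$ at height $0$. I would extend the surface by reflecting across every slice $\mathbb H^n\times\{k\lambda_{H_n,d_n}(\rho_+)\}$, $k\in\mathbb Z$: reflections across slices at odd multiples of $\lambda_{H_n,d_n}(\rho_+)$ glue $C^2$ along the bulges of radius $\rho_+$ (vertical tangent), whereas reflections across the even multiples $2k\lambda_{H_n,d_n}(\rho_+)$ glue along the necks of radius $\rho_-$, where the horizontal tangent together with the blow-up of $\ddot\lambda_{H_n,d_n}$ forces cuspidal singularities. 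This produces a periodic onduloid whose radius oscillates between $\rho_-$ and $\rho_+$ and whose singular set is precisely the spheres of radius $\rho_-$ in the slices at heights $2k\lambda_{H_n,d_n}(\rho_+)$, as claimed. The step requiring the most care is embeddedness of the full periodic surface: on each half-period $\lambda_{H_n,d_n}$ is strictly monotone, so the profile is a single-valued graph $\rho=\rho(t)$ with $\rho(t)\ge\rho_->0$; since the neck radius stays bounded away from $0$, distinct periods neither collide nor pinch, and the rotational surface is embedded. The principal obstacle is thus the consistent bookkeeping of reflection heights and singularity types rather than any new estimate, the analytic content having already been settled in Propositions \ref{r-even-n=r} and \ref{prop:c2-reg}.
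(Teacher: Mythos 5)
Your proposal is correct and follows essentially the same route as the paper: Theorem \ref{str-thm4} is stated there as a direct consequence of Propositions \ref{r-even-n=r} and \ref{prop:c2-reg}, and your case-by-case assembly (vertical tangent at $\rho_+$ giving a $C^2$ gluing, the sign of $\dot\lambda_{H_n,d_n}(0)$ distinguishing the smooth pole from the conical one, and the blow-up of $\ddot\lambda_{H_n,d_n}$ at $\rho_->0$ producing cusps at the necks for $d_n<0$) matches the paper's intended argument, including the appeal to Remark \ref{signs} for invariance of the equation under vertical reflection when $r$ is even.
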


\subsection{Case \texorpdfstring{$r$}{r} odd}
\label{case-r-odd}
We organize this subsection in a similar fashion as the previous one.
Some of the arguments will be analogous to the corresponding ones for $r$ even, so we leave out the relative details.
Note that this subsection includes and extends the mean curvature case treated in \cite{berard-earp} and \cite{nelli-pipoli}.
A crucial difference from the case $r$ even is that for $d_r < 0$ the derivative $\dot{\lambda}_{H_r,d_r}$ is negative on some subset of the domain of $\lambda_{H_r,d_r}$, and for $r > 1$ the function $\lambda_{H_r,d_r}$ is not $C^2$-regular at its minimum point.
Further, more types of curves arise when $n > r$ and $d_r < 0$, and when $n=r$.
In our classification, we will recover results by B\'erard--Sa Earp \cite[Section 2]{berard-earp}, Elbert--Sa Earp \cite[Section 6]{elb-earp}, and de Lima--Manfio--dos Santos \cite[Theorem 1 and 2]{lima}.

\begin{proposition}
\label{r-odd-n>r}
Assume $r$ odd, $n>r$, and $d_r < 0$.
\begin{enumerate}
\item If $0<H_r \leq (n-r)/n$, then $\lambda_{H_r,d_r}$ is defined on $[\rho_-,+\infty)$, where $\rho_- > 0$ is the only solution of $\sinh^{n-r}(\rho)+(nH_rI_{n,r}(\rho)+d_r) = 0$. 
\item If $H_r > (n-r)/n$, then $\lambda_{H_r,d_r}$ is defined on $[\rho_-,\rho_+]$, where $\rho_-$ is as above, and $\rho_+>0$ is the only solution of $\sinh^{n-r}(\rho)-(nH_rI_{n,r}(\rho)+d_r)=0$.
\end{enumerate}
Set $\rho_0$ to be the only zero of $nH_rI_{n,r}+d_r$. 
We have $\lambda_{H_r,d_r}(\rho_-) = 0$, $\lim_{\rho \to \rho_-} \dot{\lambda}_{H_r,d_r}(\rho) = -\infty$, $\dot{\lambda}_{H_r,d_r}(\rho) < 0$ when $\rho_- < \rho < \rho_0$, 
and $\dot{\lambda}_{H_r,d_r}(\rho) > 0$ when $\rho > \rho_0$. In case $(1)$, $\lim_{\rho \to +\infty} \lambda_{H_r,d_r}(\rho) = +\infty$. 
In case $(2)$, $\lim_{\rho \to \rho_+} \dot{\lambda}_{H_r,d_r}(\rho) = +\infty$.
Further, $\lambda_{H_r,d_r}$ is convex in the interior of its domain. In particular, it is of class $C^2$ for $r=1$, and $\lim_{\rho \to \rho_0} \ddot{\lambda}_{H_r,d_r}(\rho) = +\infty$ for $r>1$ (Figure \ref{fig:4}).
\end{proposition}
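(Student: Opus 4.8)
The plan is to mirror the analysis of the even case (Propositions \ref{r-even-n>r1} and \ref{r_even-2}), the essential new feature being that for $r$ odd the quantity $u(\rho) \coloneqq nH_rI_{n,r}(\rho)+d_r$ may be negative while the odd root $u^{1/r}$ appearing in \eqref{lambda} is still real. Since $u^{2/r}=|u|^{2/r}$, the integrand in \eqref{lambda} is defined exactly when $\sinh^{n-r}(\rho)\geq |u(\rho)|$, that is, when both
$$g_-(\rho)\coloneqq \sinh^{n-r}(\rho)+u(\rho)\geq 0 \qquad\text{and}\qquad g_+(\rho)\coloneqq \sinh^{n-r}(\rho)-u(\rho)\geq 0.$$
First I would record that $u(0)=d_r<0$ and that $u$ is strictly increasing by Remark \ref{integ}, so it has a unique zero $\rho_0>0$, with $u<0$ on $(0,\rho_0)$ and $u>0$ afterwards; this $\rho_0$ is precisely the prescribed critical point.

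Next I would study the two constraints. One computes $g_-'=(n-r)\sinh^{n-r-1}\cosh+nH_r\sinh^{n-1}/\cosh^{r-1}>0$, so $g_-$ is strictly increasing; since $g_-(0)=d_r<0$ and $g_-\to+\infty$, there is a unique $\rho_->0$ with $g_-(\rho_-)=0$, and $\rho_-<\rho_0$ because $g_-(\rho_0)=\sinh^{n-r}(\rho_0)>0$. For the upper constraint one finds, exactly as in the even case, $g_+'(\rho)=\sinh^{n-r-1}(\rho)\cosh(\rho)\bigl((n-r)-nH_r\tanh^r(\rho)\bigr)$. When $0<H_r\leq (n-r)/n$ this is positive and $g_+(0)=-d_r>0$, so $g_+>0$ throughout and the domain is $[\rho_-,+\infty)$, giving case $(1)$. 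When $H_r>(n-r)/n$, $g_+$ increases then decreases to $-\infty$, hence has a unique zero $\rho_+>\rho_0$, and the domain is $[\rho_-,\rho_+]$, giving case $(2)$.

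The sign statements for $\dot\lambda_{H_r,d_r}$ are then immediate from Remark \ref{signs}, since $\dot\lambda_{H_r,d_r}$ has the sign of $u$: it is negative on $(\rho_-,\rho_0)$, vanishes at $\rho_0$, and is positive afterwards. At $\rho_-$ the denominator of the integrand vanishes while its numerator $u(\rho_-)^{1/r}<0$, so $\lim_{\rho\to\rho_-}\dot\lambda_{H_r,d_r}=-\infty$; at $\rho_+$ the numerator is positive, giving $+\infty$. Finiteness of $\lambda_{H_r,d_r}$ at the endpoints follows as in Proposition \ref{r-even-n>r1}: factoring $\sinh^{2(n-r)/r}-u^{2/r}$ and using that $g_\pm$ vanish to first order there (as $g_-'(\rho_-)>0$ and $g_+'(\rho_+)<0$) shows the integrand behaves like $(\rho-\rho_\pm)^{-1/2}$, which is integrable. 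For case $(1)$, on $(\rho_0,+\infty)$ the integrand is bounded below by $u^{1/r}/\sinh^{(n-r)/r}$, which by the asymptotics of $I_{n,r}$ in Remark \ref{integ} tends to $(nH_r/(n-r))^{1/r}>0$ (the bound being uniform, and still positive in the equality case $H_r=(n-r)/n$); hence the integral diverges and $\lambda_{H_r,d_r}\to+\infty$.

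Finally, for convexity I would use \eqref{second-derivative}, noting that for $r$ odd one has $u^{(r-1)/r}=|u|^{(r-1)/r}\geq 0$, so the sign of $\ddot\lambda_{H_r,d_r}$ equals that of
$$g(\rho)\coloneqq \frac{\sinh^n(\rho)}{\cosh^r(\rho)}-(n-r)I_{n,r}(\rho)-\frac{(n-r)d_r}{nH_r}.$$
As in the even case $g'(\rho)=r\sinh^{n-1}(\rho)/\cosh^{r+1}(\rho)>0$, so $g$ is increasing; the key elementary computation is that $g(\rho_-)>0$, which follows from $u(\rho_-)=-\sinh^{n-r}(\rho_-)$ via the substitution $(n-r)I_{n,r}(\rho_-)=\tfrac{n-r}{nH_r}(-\sinh^{n-r}(\rho_-)-d_r)$, yielding $g(\rho_-)=\sinh^n(\rho_-)/\cosh^r(\rho_-)+\tfrac{n-r}{nH_r}\sinh^{n-r}(\rho_-)>0$. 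Hence $g>0$ on the whole domain and $\ddot\lambda_{H_r,d_r}>0$ away from $\rho_0$. The delicate point, which I expect to be the main obstacle, is the critical point $\rho_0$, where $u=0$ makes the factor $u^{(r-1)/r}$ in \eqref{second-derivative} vanish: for $r=1$ this factor is $1$ and $\ddot\lambda_{H_r,d_r}$ stays finite, so $\lambda_{H_r,d_r}$ is $C^2$, whereas for $r>1$ the numerator remains positive while the denominator vanishes, giving $\lim_{\rho\to\rho_0}\ddot\lambda_{H_r,d_r}=+\infty$. In either case $\dot\lambda_{H_r,d_r}$ is monotonically increasing across $\rho_0$, so $\lambda_{H_r,d_r}$ is convex throughout its domain.
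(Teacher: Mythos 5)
Your proposal is correct and follows essentially the same route as the paper: analyze the two constraints $\sinh^{n-r}\pm(nH_rI_{n,r}+d_r)\geq 0$ to locate $\rho_-$ and $\rho_+$, read off the sign of $\dot\lambda_{H_r,d_r}$ from \eqref{first-integ}, and settle convexity and the behavior at $\rho_0$ via \eqref{second-derivative}. The paper defers most of these details to the proof of Proposition \ref{r-even-n>r1} with the phrase ``similar arguments''; your explicit verification that $g(\rho_-)=\sinh^n(\rho_-)/\cosh^r(\rho_-)+\tfrac{n-r}{nH_r}\sinh^{n-r}(\rho_-)>0$ (needed because here $\rho_-$ is the zero of $\sinh^{n-r}+u$ rather than of $u$) is exactly the right way to fill that gap.
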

\begin{figure}[htb]
\begin{center}
\begin{tikzpicture}[scale=1.3]
   \draw[->] (-1.2,0) -- (2,0);
  \draw[->] (0,-0.5) -- (0,1.5);
  \draw[thick] (0.3,0) .. controls (0.3,-1) and (1.2,-0.3) .. (1.4,0.7);
  \draw[thick, dashed] (1.4,0.7) .. controls (1.4,0.7) and (1.42,0.8) .. (1.43,0.9);
  \draw[dotted] (0.56,0) -- (0.56,-0.5);
  \node at (-1,1) {$H_r \leq \frac{n-r}{n}$};
  \node at (0.2,0.2) {$\rho_-$};
  \node at (0.6,0.2) {$\rho_0$};
  \node at (-0.15,-0.2) {$0$};
  \node at (1.15,1.2) {$\lambda_{H_r,d_r}$};
  \node at (2,-0.2) {$\rho$};
  \node at (-0.2,1.5) {$t$};
\end{tikzpicture}
\qquad 
\begin{tikzpicture}[scale=1.3]
   \draw[->] (-1.2,0) -- (2,0);
  \draw[->] (0,-0.5) -- (0,1.5);
  \draw[thick] (0.3,0) .. controls (0.3,-1) and (1.4,-0.3) .. (1.4,1);
  \draw[thick,blue] (0.2,0) .. controls (0.2,-1) and (1.5,-0.7) .. (1.5,0);
  \draw[thick,red] (0.1,0) .. controls (0.1,-1) and (1.6,-1) .. (1.6,-0.3);
  \draw[dotted] (0.57,0) -- (0.57,-0.5);
  \draw[dotted] (1.4,0) -- (1.4,1);
  \draw[dotted] (1.6,0) -- (1.6,-0.3);
  \draw[dotted] (0.75,0) -- (0.75,-0.65);
  \draw[dotted] (0.95,0) -- (0.95,-0.8);
  \node at (1.7,0.2) {$\rho_+$};
  \node at (-1,1) {$H_r > \frac{n-r}{n}$};
  \node at (0.2,0.2) {$\rho_-$};
  \node at (0.75,0.2) {$\rho_0$};
  \node at (-0.15,-0.2) {$0$};
  \node at (1.15,1.2) {$\lambda_{H_r,d_r}$};
  \node at (2,-0.2) {$\rho$};
  \node at (-0.2,1.5) {$t$};
\end{tikzpicture}
\end{center}
\caption{Behavior of $\lambda_{H_r,d_r}$ for $n>r$, $r$ odd, and $d_r < 0$.
For $H_1 > (n-1)/n$, $\lambda_{H_1,d_1}(\rho_+)$ is positive. 
When $r \geq 3$, $\lambda_{H_r,d_r}(\rho_+)$ may be positive (black curve), negative (red curve), or zero (blue curve) depending on the values of $H_r$ and $d_r$.}
\label{fig:4}
\end{figure}
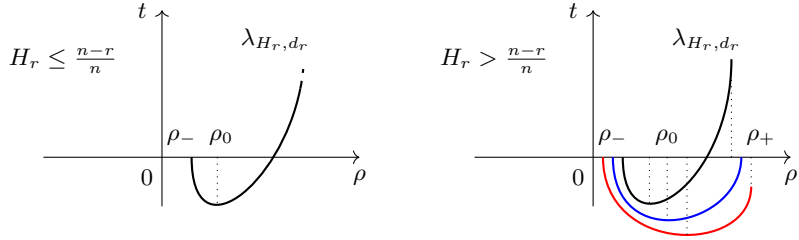
\begin{proof}
Our constraint for $\lambda_{H_r,d_r}$ to be well-defined is now 
\begin{equation}
\label{cond2}
-\sinh^{n-r}(\rho) < nH_r I_{n,r}(\rho)+d_r < \sinh^{n-r}(\rho), \qquad \rho > 0.
\end{equation}
We know that $nH_rI_{n,r}+d_r$ is an increasing function with $d_r < 0$ and $I_{n,r}(0) = 0$, so that 
$nH_rI_{n,r}(0)+d_r < 0$. The first inequality in \eqref{cond2} is then always satisfied for $\rho > \rho_- > 0$, where
$\rho_-$ is the unique solution of $nH_rI_{n,r}(\rho)+d_r+\sinh^{n-r}(\rho) = 0$. It is clear that $\rho_- \to 0$ if and only if $d_r \to 0$. 
The study of the second inequality goes along the lines of the corresponding one for $r$ even (Proposition~\ref{r-even-n>r1}).
Note that $\lim_{\rho \to \rho_-} \dot{\lambda}_{H_r,d_r}(\rho) = -\infty$ regardless of the value of $H_r$.
Also, $\lambda_{H_r,d_r}$ is decreasing on $(\rho_-,\rho_0)$, where $\rho_0$ is the only zero of $nH_rI_{n,r}+d_r$, then it increases beyond $\rho_0$.
Convergence at $\rho_-$ or $\rho_+$  and the statements involving the second derivative follow by \eqref{second-derivative} and similar arguments as in the proof of Proposition \ref{r-even-n>r1}.
We point out that for $r=1$ the term $(nH_rI_{n,r}(\rho)+d_r)^{(r-1)/r}$ equals $1$, so the second derivative of $\lambda_{H_r,d_r}$ is well-defined
over the interior of the whole domain. For $r > 1$ the same term vanishes at $\rho_0$, and this concludes the proof.
\end{proof}

Unlike the case when $r$ is even, the sign of $\lambda_{H_r,d_r}(\rho_+),$ for $H_r > (n-r)/n,$ $r>1$ odd, is not always positive. We discuss this point here below. Moreover we show that $\lambda_{H_1,d_1}(\rho_+)$ only takes positive values.

\begin{proposition}
\label{prop:special-cases}
The following statements hold.
\begin{enumerate}
\item \label{item1} If $H_1 > (n-1)/n$, then $\lambda_{H_1,d_1}(\rho_+) > 0$ for all $d_1 < 0$.
\item \label{item2} Let $2r-1>n>r\geq 3$, and $r$ odd. Then there exist values $H_r>(n-r)/n$ and $d_r<0$ such that 
$\lambda_{H_r,d_r}(\rho_+)$ is negative, positive, or zero.
\end{enumerate}
\end{proposition}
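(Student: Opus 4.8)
The plan is to express, in both statements, the height
$$\lambda_{H_r,d_r}(\rho_+)=\int_{\rho_-}^{\rho_+}\dot\lambda_{H_r,d_r}(\rho)\,d\rho$$
(using $\lambda_{H_r,d_r}(\rho_-)=0$) as a signed area and to decide its sign after passing to the inclination angle $\theta\coloneqq\arctan\dot\lambda_{H_r,d_r}$. By the convexity in Proposition~\ref{r-odd-n>r}, $\theta$ is a continuous increasing bijection from $(\rho_-,\rho_+)$ onto $(-\tfrac\pi2,\tfrac\pi2)$, vanishing exactly at $\rho_0$, so it is a legitimate change of variable. Writing $P\coloneqq nH_rI_{n,r}+d_r=\sinh^{n-r}(\rho)\sin^r\theta$ and differentiating the first integral \eqref{first-integ}, one obtains after dividing by $\sinh^{n-r}(\rho)$
$$r\sin^{r-1}\theta\cos\theta\,\theta'=nH_r\tanh^{r-1}(\rho)-(n-r)\cotgh(\rho)\sin^r\theta\eqqcolon D(\theta),$$
and the identity $D(\theta)=r\sin^{r-1}\theta\cos\theta\,\theta'>0$ is exactly the convexity of $\lambda_{H_r,d_r}$. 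Consequently
$$\lambda_{H_r,d_r}(\rho_+)=\int_{-\pi/2}^{\pi/2}\frac{r\sin^r\theta}{D(\theta)}\,d\theta,$$
the convergence at $\theta=\pm\pi/2$ being inherited from the already established finiteness of $\lambda_{H_r,d_r}(\rho_+)$.

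For \eqref{item1} I would symmetrize this integral by pairing $\theta$ with $-\theta$. Since $r=1$ kills the factor $\tanh^{r-1}(\rho)\equiv1$, a direct computation gives
$$D(-\theta)-D(\theta)=(n-1)\sin\theta\big(\cotgh(\rho(-\theta))+\cotgh(\rho(\theta))\big),$$
which is strictly positive for $\theta\in(0,\pi/2)$. As $D(\theta),D(-\theta)>0$, the paired integrand $r\sin^r\theta\,(D(-\theta)-D(\theta))/(D(\theta)D(-\theta))$ is positive there, and integrating over $(0,\pi/2)$ yields $\lambda_{H_1,d_1}(\rho_+)>0$ for every $d_1<0$.

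For \eqref{item2} the same computation isolates the mechanism behind the change of behavior: when $r\geq3$ one has
$$D(-\theta)-D(\theta)=nH_r\big(\tanh^{r-1}(\rho(-\theta))-\tanh^{r-1}(\rho(\theta))\big)+(n-r)\sin^r\theta\big(\cotgh(\rho(-\theta))+\cotgh(\rho(\theta))\big),$$
where, because $\rho(-\theta)<\rho_0<\rho(\theta)$ and $\tanh^{r-1}$ is increasing, the first summand is now negative and competes with the positive second one. To produce a positive value I would let $d_r\to0^-$: then $\rho_-,\rho_0\to0$, the descending contribution on $(\rho_-,\rho_0)$ vanishes while the ascending one stays positive, so $\lambda_{H_r,d_r}(\rho_+)>0$. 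To produce a negative value I would instead push $H_r$ up, increasing the weight $nH_r$ of the negative summand until it makes the total integral negative and hence $\lambda_{H_r,d_r}(\rho_+)<0$. The zero value then follows from the intermediate value theorem, since $(H_r,d_r)\mapsto\lambda_{H_r,d_r}(\rho_+)$ is continuous on the admissible region.

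The hard part will be the negative case, because raising $H_r$ simultaneously drives $\rho_-,\rho_0,\rho_+$ to $0$, so one must control the competition in the Euclidean blow-up $\rho\to0$, where $I_{n,r}(\rho)\approx\rho^n/n$, $\tanh(\rho)\approx\rho$ and $\cotgh(\rho)\approx\rho^{-1}$ by Remark~\ref{integ}. In this limit the leading paired integrand is odd in $\theta$ (as $r$ is odd), hence integrates to zero, and the sign of $\lambda_{H_r,d_r}(\rho_+)$ is decided by the first correction. That correction pits the exponent $r-1$ of $\tanh^{r-1}(\rho)$ against the exponent $n-r$ of $\sinh^{n-r}(\rho)$, and the hypothesis $2r-1>n$, i.e.\ $r-1>n-r$, is precisely what makes it negative. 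Carrying out this expansion and checking that it is realized within the admissible window $d_r<0$, $H_r>(n-r)/n$ is the technical core of the argument.
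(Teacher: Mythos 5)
Your part (\ref{item1}) is correct and takes a genuinely different route from the paper. The paper proves $\lambda_{H_1,d_1}(\rho_+)>0$ by Alexandrov reflection with vertical hyperplanes applied to the closed-up hypersurface, invoking the Maximum Principle to rule out $\lambda_{H_1,d_1}(\rho_+)=0$ and then continuity in $d_1$. You instead pass to the inclination angle and symmetrize: the identity $r\sin^{r-1}\theta\cos\theta\,\theta'=nH_r\tanh^{r-1}(\rho)-(n-r)\cotgh(\rho)\sin^r\theta$ is a correct consequence of \eqref{first-integ}, the change of variables is legitimate by the strict convexity in Proposition~\ref{r-odd-n>r}, and for $r=1$ the pairing $\theta\leftrightarrow-\theta$ gives a manifestly positive integrand because $\tanh^{r-1}\equiv 1$ kills the only term of ambiguous sign. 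This is a self-contained, quantitative argument that avoids the Maximum Principle entirely; it buys an explicit lower-bound mechanism, whereas the paper's reflection argument is softer but shorter. (You should still verify that $D(\pm\pi/2)>0$ so the integrand is bounded at the endpoints, which follows from the strict positivity of the numerator of \eqref{second-derivative}, but this is routine.)

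Part (\ref{item2}) has a genuine gap in the negative subcase, which is the crux of the statement. The positive value (let $d_r\to 0^-$ and use continuity from $\lambda_{H_r,0}(\rho_+)>0$) and the zero value (intermediate value theorem) match the paper and are fine. For the negative value you only describe a plan: blow up at the origin as $H_r\to\infty$ with $d_r$ fixed, observe that the leading paired integrand is odd, and claim that the first correction is negative precisely when $r-1>n-r$. You do not carry out this expansion, and you say so explicitly ("carrying out this expansion \dots is the technical core of the argument"). The missing content is substantial: one must compute $\delta D(\theta)=D(-\theta)-D(\theta)$ to first order in $\rho^{n-r}/|d_r|$ (where both the variation of $\tanh^{r-1}(\rho(\theta))$ and the term $(n-r)\cotgh(\rho)\sin^r\theta$ contribute at the \emph{same} order, so neither can be discarded), show that the net coefficient is proportional to $2r-1-n$, and then control the remainder uniformly in $\theta$ up to $\pm\pi/2$ so that the sign of the full integral is indeed that of the first correction. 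None of this is done. For comparison, the paper closes this case with a clean, checkable inequality: for $H_r$ large the function $g=(nH_rI_{n,r}+d_r)/\sinh^{n-r}$ is convex on $(\rho_-,\rho_+)$ (the hypothesis $2r-1-n>0$ enters exactly in the sign of $g''$), hence $g$ lies below the chord joining $(\rho_-,-1)$ to $(\rho_+,1)$, and monotonicity of $x\mapsto x^{1/r}/\sqrt{1-x^{2/r}}$ together with oddness of the chord integral forces $\lambda_{H_r,d_r}(\rho_+)<0$. Your heuristic is pointing at the same phenomenon, but as written it is a proof outline, not a proof.
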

\begin{proof}
In  case (\ref{item1}),  it is well known that the rotational hypersurface generated by the curve defined by $\lambda_{H_1,d_1}$ is of class $C^2$. 
We show (\ref{item1}) by using Alexandrov reflection method with respect to vertical hyperplanes in $\mathbb H^n \times \mathbb R$.
Let $H_1 > (n-1)/n$ be fixed. Since the function defining $\lambda_{H_1,0}$ is non-negative and does not vanish, and $\lambda_{H_1,d_1}$ is continuous in $d_1$, then for $d_1 < 0$ close enough to $0$ we have $\lambda_{H_1,d_1}(\rho_+) > 0$.
Suppose there is a value of the parameter $d_1$ for which $\lambda_{H_1,d_1}(\rho_+)$ vanishes.
Consider the rotational hypersurface $S$ obtained after reflecting the graph of $\lambda_{H_1,d_1}$ across the $\rho$-axis, and then rotating about the $t$-axis.
Topologically $S$ is a product $S^1 \times S^{n-1}$, and is of class $C^2$.
Since $S$ is compact, we can take a vertical hyperplane $\Pi \subset \mathbb H^n \times \mathbb R$ corresponding to $\rho > 0$ large enough not intersecting $S$, and then move it towards $S$ until $\Pi \cap S \neq \varnothing$.
We keep moving $\Pi$ in the same way and reflect the portion of $S$ left behind $\Pi$ across $\Pi$. Since $\lambda_{H_1,d_1}(\rho_-) = 0$, there will be a first  intersection point between the reflected part of $S$ and $S$ itself.
The Maximum Principle then implies that $S$ has a symmetry with respect to a vertical hyperplane corresponding to some $\rho \in (\rho_-,\rho_+)$.
But this is a contradiction, as the hypersurface has rotational symmetry about $t = 0$. 
Continuity of $\lambda_{H_1,d_1}$ with respect to the parameters implies that there cannot be values of $d_1$ such that $\lambda_{H_1,d_1}(\rho_+)$ is negative.

As for (\ref{item2}), observe that for $H_r > (n-r)/n$ we have $\lambda_{H_r,0}(\rho_+)>0$, because the integrand function defining $\lambda_{H_r,0}$ is non-negative and does not vanish identically.
Continuity with respect to the parameter $d_r$ implies that $\lambda_{H_r,d_r}(\rho_+)>0$ for $d_r<0$ close enough to $0$.
We now show that $\lambda_{H_r,d_r}(\rho_+)<0$ for some $H_r > (n-r)/n$ and $d_r < 0$.
Let us introduce the function
$$g(\rho) \coloneqq \frac{nH_rI_{n,r}(\rho)+d_r}{\sinh^{n-r}(\rho)},$$
and note that we can rewrite $\lambda_{H_r,d_r}(\rho_+)$ as
$$\lambda_{H_r,d_r}(\rho_+)=\int_{\rho_-}^{\rho_+}\frac{g(\xi)^{\frac 1r}}{\sqrt{1-g(\xi)^{\frac 2r}}}\,d\xi.$$
We claim that, for any $d_r<0$ and $2r-n-1 > 0$, if $H_r$ is large enough then $g$ is convex on $(\rho_-,\rho_+)$.
So let $d_r < 0$ be fixed. By definition of $\rho_{\pm}$ we have
$$
H_r=\frac{|d_r|\pm\sinh^{n-r}(\rho_{\pm})}{nI_{n,r}(\rho_{\pm})}.
$$ 
Observe that $\rho_{\pm}\rightarrow 0$ if and only if $H_r\rightarrow\infty$ and $\rho_{\pm}\approx |d_r|^{\frac{1}{n}}H_r{}^{-\frac{1}{n}}$ as $H_r\rightarrow\infty$.
Therefore for any $\rho\in(\rho_-,\rho_+)$ we estimate
\begin{equation}\label{eq001}
\rho\approx\left(\frac{|d_r|}{H_r}\right)^{\frac 1n},\quad H_r\rightarrow\infty.
\end{equation}

Since $-\sinh^{n-r}(\rho)<nH_rI_{n,r}(\rho)+d_r<\sinh^{n-r}(\rho)$ holds on $(\rho_-,\rho_+)$, \eqref{eq001} and explicit computations give that for any $\rho\in(\rho_-,\rho_+)$ we have 
\begin{align*}
g''(\rho) & = nH_r\left(\frac{\sinh(\rho)}{\cosh(\rho)}\right)^{r-2}\left(\frac{r-1}{\cosh^2(\rho)}-(n-r)\right)\\
& \qquad +\frac{nH_rI_{n,r}(\rho)+d_r}{\sinh^{n-r+2}(\rho)}((n-r)\sinh^2(\rho)+n-r+1) \\
& > nH_r\left(\frac{\sinh(\rho)}{\cosh(\rho)}\right)^{r-2}\left(\frac{r-1}{\cosh^2(\rho)}-(n-r)\right)-\frac{(n-r)\sinh^2(\rho)+n-r+1}{\sinh^2(\rho)} \\
& \approx H_r^{\frac 2n}\left((2r-1-n)|d_r|^{\frac{r-2}{n}}H_r{}^{\frac{n-r}{n}}-(n-r+1)|d_r|^{-\frac{2}{n}}\right)-(n-r).
\end{align*}
When $H_r\rightarrow\infty$ the latter quantity diverges to $+\infty$ if $2r-1-n>0$, hence $g'' > 0$ on $(\rho_-,\rho_+)$.
Fix $H_r$ large enough such that $g$ is convex in $(\rho_-,\rho_+)$. 
Since $g(\rho_{\pm})=\pm 1$, then $g(\rho)<s(\rho)$ for any $\rho \in (\rho_-,\rho_+)$, where $s$ is the segment-line connecting $(\rho_-,-1)$ with $(\rho_+,1)$. 
Moreover the function $x \mapsto x^{1/r}/\sqrt{1-x^{2/r}}$ is increasing on $(-1,1)$. 
For such a choice of $H_r$ and $d_r$ we then have
$$
\lambda_{H_r,d_r}(\rho_+)<\int_{\rho_-}^{\rho_+}\frac{s(\xi)^\frac 1r}{\sqrt{1-s(\xi)^{\frac 2r}}}\,d\xi=\frac{\rho_+-\rho_-}{2}\int_{-1}^1\frac{u^\frac 1r}{\sqrt{1-u^{\frac 2r}}}\,du=0,
$$
as the latter integrand function is odd.

Continuity of $\lambda_{H_r,d_r}$ with respect to the parameters $H_r$ and $d_r$ implies the last assertion of  (\ref{item2}) at once.
\end{proof}

The proof of the next statement is left out, because the results can be seen by adapting the proof of Proposition \ref{r-even-n>r1} when $d_r = 0$.
\begin{proposition}
\label{r-odd-n>r2}
Assume $r$ odd, $n>r$, and $d_r=0$. 
\begin{enumerate}
\item If $0<H_r \leq (n-r)/n$, then $\lambda_{H_r,0}$ is defined on $[0,+\infty)$.
\item If $H_r > (n-r)/n$, then $\lambda_{H_r,0}$ is defined on $[0,\rho_+]$, where $\rho_+>0$ is the only solution of $\sinh^{n-r}(\rho)-nH_rI_{n,r}(\rho) = 0$. 
\end{enumerate}
Further, $\lambda_{H_r,0}$ is increasing and convex in the interior of its domain.
We have $\lambda_{H_r,0}(0) = 0 = \lim_{\rho \to 0} \dot{\lambda}_{H_r,0}(\rho)$. 
In case $(1)$, $\lambda_{H_r,0}$ is unbounded. 
In case $(2)$, $\lim_{\rho \to \rho_+} \dot{\lambda}_{H_r,0}(\rho) = +\infty$.
Finally, $\lim_{t \to 0} \ddot{\lambda}_{H_r,0}(\rho) = H_r{}^{1/r}$ (Figure \ref{fig:5}).
\end{proposition}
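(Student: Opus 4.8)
The plan is to transcribe the argument of Proposition \ref{r-even-n>r1} specialized to $d_r=0$, in which case $\rho_-=0$. The crucial preliminary observation is that, although $r$ is odd, choosing $d_r=0$ makes $nH_rI_{n,r}(\rho)$ non-negative for every $\rho\ge 0$; hence all the fractional powers $(\,\cdot\,)^{1/r}$ appearing in \eqref{lambda} and \eqref{second-derivative} are evaluated at non-negative arguments, and none of the sign subtleties that complicate the case $d_r<0$ (where $\dot\lambda_{H_r,d_r}$ changes sign and the relevant quantity may be negative) arise here. For this reason the even-case analysis transfers essentially verbatim.

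First I would pin down the domain. The well-definedness constraint $0\le nH_rI_{n,r}(\rho)<\sinh^{n-r}(\rho)$ holds at $\rho=0$ with equality on the left, so $\rho_-=0$. Setting $f(\rho):=\sinh^{n-r}(\rho)-nH_rI_{n,r}(\rho)$ one has $f(0)=0$ and $f'(\rho)=\sinh^{n-r-1}(\rho)\cosh(\rho)\bigl((n-r)-nH_r\tanh^r(\rho)\bigr)$. When $0<H_r\le(n-r)/n$ one has $\tanh^r(\rho)<1\le(n-r)/(nH_r)$, so $f'>0$ and $f>0$ on $(0,\infty)$, giving the domain $[0,+\infty)$ of case (1). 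When $H_r>(n-r)/n$ the factor $(n-r)-nH_r\tanh^r(\rho)$ eventually becomes negative, $f'\to-\infty$, and $f$ has a unique zero $\rho_+>0$, giving the domain $[0,\rho_+]$ of case (2).

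Next I would read off the qualitative behavior by writing the integrand of \eqref{lambda} as $g(\rho)^{1/r}\bigl(1-g(\rho)^{2/r}\bigr)^{-1/2}$ with $g(\rho):=nH_rI_{n,r}(\rho)/\sinh^{n-r}(\rho)$. The estimate $nI_{n,r}(\rho)\approx\rho^n$ from Remark \ref{integ} gives $g(\rho)\approx H_r\rho^r\to 0$ as $\rho\to 0$, whence $\lambda_{H_r,0}(0)=0$ and $\dot\lambda_{H_r,0}(\rho)\approx H_r^{1/r}\rho\to 0$. Since the integrand is strictly positive for $\rho>0$, $\lambda_{H_r,0}$ is increasing. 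For convexity I reuse the sign function from the proof of Proposition \ref{r-even-n>r1}, namely $\sinh^n(\rho)/\cosh^r(\rho)-(n-r)I_{n,r}(\rho)$, which vanishes at $0$ and has positive derivative $r\sinh^{n-1}(\rho)/\cosh^{r+1}(\rho)$, hence stays positive; by \eqref{second-derivative} this forces $\ddot\lambda_{H_r,0}>0$ in the interior.

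Finally I would treat the endpoint asymptotics, which is the only place needing genuine care. For unboundedness in case (1), the relation $(n-r)I_{n,r}(\rho)\approx\sinh^{n-r}(\rho)$ as $\rho\to+\infty$ gives $g(\rho)\to nH_r/(n-r)\le 1$, so the integrand is bounded below by a positive constant (and diverges when $H_r=(n-r)/n$), forcing $\lambda_{H_r,0}\to+\infty$. In case (2) one has $g(\rho_+)=1$, so the integrand blows up and $\lim_{\rho\to\rho_+}\dot\lambda_{H_r,0}=+\infty$, while finiteness of $\lambda_{H_r,0}(\rho_+)$ follows exactly as in Proposition \ref{r-even-n>r1}: the function $h(\rho):=\sinh^{(n-r)/r}(\rho)-(nH_rI_{n,r}(\rho))^{1/r}$ vanishes to first order at $\rho_+$, so the integrand behaves like $(\rho_+-\rho)^{-1/2}$ there. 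The closing claim $\lim_{\rho\to 0}\ddot\lambda_{H_r,0}=H_r^{1/r}$ follows either from $\dot\lambda_{H_r,0}\approx H_r^{1/r}\rho$ or by inserting the estimates of Remark \ref{integ} into \eqref{second-derivative} and verifying that the powers of $\rho$ cancel; everything else is a direct transcription of the even case.
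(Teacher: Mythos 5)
Your proposal is correct and follows exactly the route the paper intends: the paper omits this proof, stating only that it is obtained by adapting Proposition \ref{r-even-n>r1} to $d_r=0$, and your argument carries out precisely that adaptation, including the key observation that $d_r=0$ keeps $nH_rI_{n,r}\geq 0$ so the odd-$r$ sign issues never arise. The only cosmetic caveat is that the limit $\lim_{\rho\to 0}\ddot\lambda_{H_r,0}(\rho)=H_r^{1/r}$ does not follow from $\dot\lambda_{H_r,0}(\rho)\approx H_r^{1/r}\rho$ alone, but your second alternative (substituting the estimates of Remark \ref{integ} into \eqref{second-derivative}) is the correct and sufficient argument.
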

\begin{figure}[htb]
\begin{center}
\begin{tikzpicture}[scale=1.3]
  \draw[->] (-1.2,0) -- (2,0);
  \draw[->] (0,-0.5) -- (0,1.5);
  \draw[thick] (0,0) .. controls (1,0) and (1.3,0.5) .. (1.6,1);
  \draw[thick, dashed] (1.6,1) .. controls (1.7,1.2) and (1.7,1.2) .. (1.75,1.3);
  \node at (-1,1) {$H_r \leq \frac{n-r}{n}$};
  \node at (-0.15,-0.2) {$0$};
  \node at (2,-0.2) {$\rho$};
  \node at (1.2,1.2) {$\lambda_{H_r,0}$};
  \node at (-0.2,1.5) {$t$};
\end{tikzpicture}
\qquad 
\begin{tikzpicture}[scale=1.3]
   \draw[->] (-1.2,0) -- (2,0);
  \draw[->] (0,-0.5) -- (0,1.5);
  \draw[thick] (0,0) .. controls (0.5,0) and (1.5,0.01) .. (1.5,1);
  \draw[dotted] (1.5,1) -- (1.5,0);
  \node at (-1,1) {$H_r > \frac{n-r}{n}$};
  \node at (-0.15,-0.2) {$0$};
  \node at (1.5,-0.2) {$\rho_+$};
  \node at (1.15,1.2) {$\lambda_{H_r,0}$};
  \node at (2,-0.2) {$\rho$};
  \node at (-0.2,1.5) {$t$};
\end{tikzpicture}
\end{center}
\caption{Behavior of $\lambda_{H_r,0}$ for $n>r$ and $r$ odd.}
\label{fig:5}
\label{limits}
\end{figure}
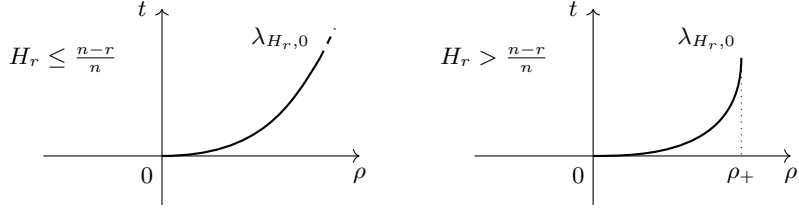

In order to prove the next result, one needs the analogue of formula \eqref{formula_integral} and Lemma~\ref{important_identity} for $r$ odd. 
We have $I_{2,1}(x) = \cosh(x) - 1$ and for $r \geq 3$ we compute
\begin{align*}
I_{r+1,r}(x) & = -\sinh(x)\biggl(\frac{1}{r-2}\tanh^{r-2}(x)+\frac{r-1}{(r-2)(r-4)}\tanh^{r-4}(x) \nonumber \\
& \qquad +\frac{(r-1)(r-3)}{(r-2)(r-4)(r-6)}\tanh^{r-6}(x)+\dots + \frac{(r-1)!!}{2(r-2)!!}\tanh(x)\biggr) \nonumber \\ 
& \qquad + \frac{(r-1)!!}{(r-2)!!}I_{2,1}(x).
\end{align*}
\begin{lemma}
Let $r \geq 3$ be an odd natural number. Then
\begin{align*}
\frac{(r-1)!!}{(r-2)!!} & = 1+\frac{1}{r-2}+\frac{r-1}{(r-2)(r-4)}+\frac{(r-1)(r-3)}{(r-2)(r-4)(r-6)} +\\
& \qquad + \dots + \frac{(r-1)(r-3)\cdots 4}{(r-2)(r-4)\cdots 3},
\end{align*}
where, for all $r,$ the sum on the right-hand side must be truncated in such a way that all summands are positive.
\end{lemma}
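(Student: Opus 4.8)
The plan is to prove the identity by induction on the odd integer $r\ge 3$, incrementing by two at each step. Write the right-hand side as $S(r)\coloneqq 1+\sum_{k=1}^{(r-1)/2}a_k$, where $a_1=1/(r-2)$ and, for $k\ge 2$,
$$a_k=\frac{(r-1)(r-3)\cdots(r-2k+3)}{(r-2)(r-4)\cdots(r-2k)},$$
the numerator carrying $k-1$ factors and the denominator $k$ factors. The truncation ``all summands positive'' corresponds exactly to stopping at $k=(r-1)/2$, the largest index for which the last denominator factor $r-2k\ge 1$ is still positive. The left-hand side $T(r)\coloneqq (r-1)!!/(r-2)!!$ obeys the double-factorial recurrence $T(r)=\frac{r-1}{r-2}T(r-2)$, and the base case $r=3$ reads $S(3)=1+\tfrac{1}{1}=2=2!!/1!!=T(3)$.

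The key computational step is a recurrence relating the summands for the parameter $r$ to those for $r-2$. Factoring the leading factor $r-1$ out of the numerator and $r-2$ out of the denominator of $a_k$ and reindexing, one checks directly that
$$a_k(r)=\frac{r-1}{r-2}\,a_{k-1}(r-2),\qquad k\ge 2.$$
Here it is important that the top index $(r-1)/2$ for the parameter $r$ exceeds by exactly one the top index $(r-3)/2$ for the parameter $r-2$, so that the shifted sum matches the full sum appearing in $S(r-2)$; this is the only point where the bookkeeping of the truncation must be handled with care.

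Assembling these pieces, I would isolate $a_0=1$ and $a_1=1/(r-2)$ and apply the recurrence to the remaining terms, obtaining
$$S(r)=1+\frac{1}{r-2}+\frac{r-1}{r-2}\sum_{m=1}^{(r-3)/2}a_m(r-2)=1+\frac{1}{r-2}+\frac{r-1}{r-2}\bigl(S(r-2)-1\bigr).$$
Substituting the inductive hypothesis $S(r-2)=T(r-2)$ and collecting the terms not proportional to $T(r-2)$, the identity follows from the cancellation
$$1+\frac{1}{r-2}-\frac{r-1}{r-2}=\frac{(r-2)+1-(r-1)}{r-2}=0,$$
which leaves $S(r)=\frac{r-1}{r-2}T(r-2)=T(r)$, as desired.

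No analytic difficulty arises: the whole argument is the same bookkeeping induction used for the even case in Lemma \ref{important_identity}, the only genuine subtlety being the alignment of the truncation indices when passing from $r$ to $r-2$, which the term recurrence above makes transparent.
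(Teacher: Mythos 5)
Your induction is correct: the term recurrence $a_k(r)=\tfrac{r-1}{r-2}a_{k-1}(r-2)$, the matching of the truncation indices $(r-1)/2$ and $(r-3)/2$, and the final cancellation $1+\tfrac{1}{r-2}-\tfrac{r-1}{r-2}=0$ all check out (e.g.\ $r=5$ gives $1+\tfrac13+\tfrac43=\tfrac83=4!!/3!!$ and $r=7$ gives $16/5=6!!/5!!$). This is essentially the paper's approach: the authors state only that the identity (in its even twin, Lemma \ref{important_identity}) ``can be proved by induction'' and give no details, so your argument is exactly the intended one, written out in full.
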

The next two results can be proved  following the proof of Propositions \ref{r_even-2} and \ref{r-even-n=r}.

\begin{proposition}
Assume $r$ odd, $n>r$, and $d_r > 0$.
\begin{enumerate}
\item If $0<H_r < (n-r)/n$, then $\lambda_{H_r,d_r}$ is defined on $[\rho_-,+\infty)$, where $\rho_- > 0$
is the only solution of $\sinh^{n-r}(\rho)-(nH_rI_{n,r}(\rho)+d_r) = 0$. 
\item If $H_r = (n-r)/n$, then when $n=r+1$ we need $d_1 < 1$ or $d_r < (r-1)!!/(r-2)!!$ for $r>1,$ in order for $\lambda_{H_r,d_r}$ to be well-defined, whereas for $n>r+1$ we have no constraint.
Under such conditions, the results in the previous point hold.
\item If $H_r > (n-r)/n$, set $\tau > 0$ such that $\tanh^r(\tau) = (n-r)/nH_r$. Then $d_r < \sinh^{n-r}(\tau)-nH_rI_{n,r}(\tau)$,
for $\lambda_{H_r,d_r}$ to be defined. So $\lambda_{H_r,d_r}$ is a function on $[\rho_-,\rho_+] \subset (0,+\infty)$, where $\sinh^{n-r}(\rho_{\pm})-(nH_rI_{n,r}(\rho_{\pm})+d_r) = 0$. 
\end{enumerate}
Further, $\lambda_{H_r,d_r}$ is increasing in the interior of its domain.
In cases $(1)$--$(2)$, $\lambda_{H_r,d_r}(\rho_-) = 0$, $\lim_{\rho \to \rho_-} \dot{\lambda}_{H_r,d_r}(\rho) = +\infty$, $\lambda_{H_r,d_r}$ is unbounded, and is concave in the interior of its domain.
In case $(3)$, $\lambda_{H_r,d_r}(\rho_-) = 0$, $\lim_{\rho \to \rho_{\pm}}\dot{\lambda}_{H_r,d_r}(\rho) = +\infty$, $\lambda_{H_r,d_r}$ has a unique inflection point in $(\rho_-,\rho_+)$, and goes from being concave to convex (Figure \ref{fig:6}).
\end{proposition}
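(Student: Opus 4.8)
The plan is to mirror the proof of Proposition \ref{r_even-2} almost verbatim, the only genuine novelty being the borderline asymptotics in the critical subcase $n=r+1$, $H_r=1/(r+1)$. First I would record that for $r$ odd and $d_r>0$ the quantity $nH_rI_{n,r}(\rho)+d_r$ is strictly positive for every $\rho\geq 0$, since $I_{n,r}\geq 0$ by Remark \ref{integ} and $d_r>0$. By Remark \ref{signs} this forces $\dot\lambda_{H_r,d_r}$ to have constant positive sign, so $\lambda_{H_r,d_r}$ is increasing on the whole of its domain; moreover, raising the well-definedness inequality $\sinh^{2(n-r)/r}(\rho)>(nH_rI_{n,r}(\rho)+d_r)^{2/r}$ to the power $r/2$ shows that it collapses to the single condition $nH_rI_{n,r}(\rho)+d_r<\sinh^{n-r}(\rho)$, which is exactly the constraint governing the even case. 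Consequently the domain analysis is identical to that of Proposition \ref{r_even-2}.

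Next I would set $f(\rho)\coloneqq\sinh^{n-r}(\rho)-(nH_rI_{n,r}(\rho)+d_r)$ and repeat the monotonicity discussion. One has $f(0)=-d_r<0$ and
$$f'(\rho)=\sinh^{n-r-1}(\rho)\cosh(\rho)\left((n-r)-nH_r\tanh^r(\rho)\right),$$
so that for $0<H_r<(n-r)/n$ the derivative $f'$ stays positive and runs to $+\infty$, giving a unique $\rho_->0$ and domain $[\rho_-,+\infty)$; this yields case $(1)$. For $H_r=(n-r)/n$ with $n>r+1$ the limit of $f'$ is still $+\infty$ (or equals $r$ when $n=r+2$), so again $\rho_-$ exists and the conclusion of case $(1)$ persists. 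For $H_r>(n-r)/n$ there is a $\tau>0$ with $\tanh^r(\tau)=(n-r)/nH_r$ at which $f$ peaks, and the requirement $f(\tau)>0$ produces the stated bound $d_r<\sinh^{n-r}(\tau)-nH_rI_{n,r}(\tau)$; since $f$ then vanishes to first order at $\rho_-$ and $\rho_+$, one obtains convergence of $\lambda_{H_r,d_r}$ at both endpoints together with $\dot\lambda_{H_r,d_r}\to+\infty$ there, which is case $(3)$.

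The only delicate point, and the one I expect to be the main obstacle, is the borderline subcase $n=r+1$, $H_r=1/(r+1)$, where $nH_r=1$ and $\lim_{\rho\to\infty}f'(\rho)=0$. Here I would compute the limit of $f$ itself using the odd analogue of \eqref{formula_integral} together with the odd version of Lemma \ref{important_identity} stated just above. For $r=1$ one has $I_{2,1}(\rho)=\cosh(\rho)-1$ and $f(\rho)=\sinh(\rho)-\cosh(\rho)+1-d_1=-e^{-\rho}+1-d_1$, so $\lim_{\rho\to\infty}f(\rho)=1-d_1$ and $\rho_-$ exists precisely when $d_1<1$. For $r\geq 3$ I would substitute $I_{2,1}(\rho)=\cosh(\rho)-1$ into the expansion of $I_{r+1,r}$, factor $\sinh(\rho)$ out of the polynomial-in-$\tanh$ part, and invoke the odd Lemma to recognize that the bracketed coefficient tends to $(r-1)!!/(r-2)!!$. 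The crucial cancellation is
$$\frac{(r-1)!!}{(r-2)!!}\left(\sinh(\rho)-\cosh(\rho)\right)=-\frac{(r-1)!!}{(r-2)!!}e^{-\rho}\longrightarrow 0,$$
which plays exactly the role that $\arctan(\sinh(\rho))\to\pi/2$ played in the even case; the residual terms are controlled with the estimates $\sinh(\rho)\approx e^\rho/2$ and $\tanh(\rho)\approx 1-2e^{-2\rho}$. One thus gets $\lim_{\rho\to\infty}f(\rho)=(r-1)!!/(r-2)!!-d_r$, so that $\rho_-$ exists if and only if $d_r<(r-1)!!/(r-2)!!$, as claimed in case $(2)$. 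Note that here the bound is $(r-1)!!/(r-2)!!$ rather than the $(r-1)!!\pi/2(r-2)!!$ of Proposition \ref{r_even-2}, precisely because $I_{2,1}$ replaces $I_{3,2}$.

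Finally, the sign of the second derivative is governed, as in \eqref{second-derivative}, by
$$g(\rho)\coloneqq\frac{\sinh^n(\rho)}{\cosh^r(\rho)}-(n-r)I_{n,r}(\rho)-\frac{d_r(n-r)}{nH_r},$$
and I would copy the concavity/convexity argument of Proposition \ref{r_even-2}: since $g'(\rho)=r\sinh^{n-1}(\rho)/\cosh^{r+1}(\rho)>0$, the function $g$ is increasing, so in cases $(1)$--$(2)$ (where $\lim_{\rho\to\infty}g<0$ by Remark \ref{integ}) it stays negative and $\lambda_{H_r,d_r}$ is concave, while in case $(3)$ the signs $g(\rho_-)<0<g(\rho_+)$ force a single inflection point with transition from concave to convex. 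No step of this last part sees the parity of $r$, so it transfers with no change.
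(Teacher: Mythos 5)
Your proposal follows exactly the route the paper intends: the text states that this proposition ``can be proved following the proof of Propositions \ref{r_even-2} and \ref{r-even-n=r}'', and your reduction of the two--sided constraint \eqref{cond2} to the one--sided inequality $nH_rI_{n,r}+d_r<\sinh^{n-r}$ (valid since $d_r>0$ makes the lower bound automatic), the monotonicity analysis of $f$, and the borderline computation for $n=r+1$, $H_r=1/(r+1)$ via the odd analogue of \eqref{formula_integral} --- with $I_{2,1}(\rho)=\cosh(\rho)-1$ in place of $I_{3,2}$ and the cancellation $\sinh(\rho)-\cosh(\rho)=-e^{-\rho}\to 0$ playing the role of $\arctan(\sinh(\rho))\to\pi/2$ --- are all correct and yield the stated bound $d_r<(r-1)!!/(r-2)!!$ (and $d_1<1$). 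This is precisely the one genuinely new computation the odd case requires.

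The final concavity step for cases (1)--(2), however, which you copy verbatim from the proof of Proposition \ref{r_even-2}, does not hold up; the flaw is inherited from the paper, but it is a real gap in the argument as written. The assertion $\lim_{\rho\to+\infty}g(\rho)=-d_r(n-r)/nH_r$ does not follow from Remark \ref{integ}: that remark gives $(n-r)I_{n,r}(\rho)/\sinh^{n-r}(\rho)\to 1$, whereas $g$ is a \emph{difference} of two divergent quantities, and asymptotic equivalence of the ratios says nothing about the limit of the difference. Since $g(0)=-d_r(n-r)/nH_r$ and $g'(\rho)=r\sinh^{n-1}(\rho)/\cosh^{r+1}(\rho)$, one has $\lim_{\rho\to\infty}g=-d_r(n-r)/nH_r+\int_0^{\infty}r\sinh^{n-1}(\tau)\cosh^{-r-1}(\tau)\,d\tau$, and this integral diverges whenever $n\geq r+2$, so $g$ eventually becomes positive and $\lambda_{H_r,d_r}$ acquires an inflection point; for instance, with $n=3$, $r=1$ one computes explicitly $g(\rho)=\rho-\tanh(\rho)-2d_1/(3H_1)\to+\infty$. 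The failure is also visible directly in case (2): there $(nH_rI_{n,r}+d_r)/\sinh^{n-r}\to 1^-$, so the integrand of \eqref{lambda} tends to $+\infty$ as $\rho\to\infty$; since $\dot\lambda_{H_r,d_r}\to+\infty$ at $\rho_-$ as well, $\dot\lambda_{H_r,d_r}$ cannot be monotone and global concavity is impossible. So this step needs to be repaired (the correct conclusion is concavity near $\rho_-$, with an inflection in general); the argument for case (3), where one only compares the signs of $g(\rho_\pm)$ on a bounded interval, is unaffected.
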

\begin{figure}[htb]
\begin{center}
\begin{tikzpicture}[scale=1.3]
  \draw[->] (-1.2,0) -- (2,0);
  \draw[->] (0,-0.5) -- (0,1.5);
  \draw[thick] (0.3,0) .. controls (0.3,0.5) and (1,1) .. (1.5,1.3);
  \draw[thick,dashed] (1.5,1.3) .. controls (1.7,1.42) and (1.7,1.41) .. (1.75,1.44);
  \node at (-1,1) {$H_r \leq \frac{n-r}{n}$};
  \node at (0.3,-0.2) {$\rho_-$};
  \node at (-0.15,-0.2) {$0$};
  \node at (2,-0.2) {$\rho$};
  \node at (2,1.2) {$\lambda_{H_r,d_r}$};
  \node at (-0.2,1.5) {$t$};
\end{tikzpicture}
\qquad 
\begin{tikzpicture}[scale=1.3]
   \draw[->] (-1.2,0) -- (2,0);
  \draw[->] (0,-0.5) -- (0,1.5);
  \draw[thick] (0.3,0) .. controls (0.3,0.6) and (1.5,0.5) .. (1.5,1);
  \draw[dotted] (1.5,1) -- (1.5,0);
  \node at (-1,1) {$H_r > \frac{n-r}{n}$};
  \node at (0.3,-0.2) {$\rho_-$};
  \node at (-0.15,-0.2) {$0$};
  \node at (1.5,-0.2) {$\rho_+$};
  \node at (1.15,1.2) {$\lambda_{H_r,d_r}$};
  \node at (2,-0.2) {$\rho$};
  \node at (-0.2,1.5) {$t$};
\end{tikzpicture}
\end{center}
\caption{Behavior of $\lambda_{H_r,d_r}$ for $n>r$, $r$ odd, and $d_r > 0$.}
\label{fig:6}
\end{figure}
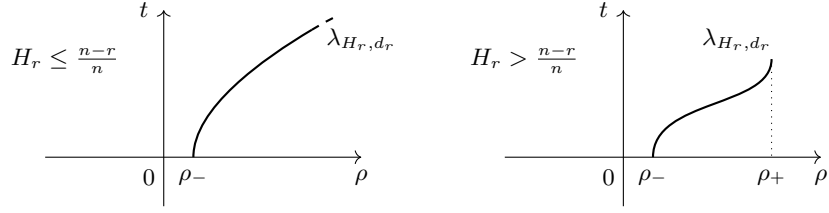

\begin{proposition}
\label{r-odd-n=r}
Assume $n=r$ odd. Then $\lambda_{H_n,d_n}$ is well-defined for $d_n < 1$. Set $I_n \coloneqq I_{n,n}$.
\begin{enumerate}
\item If $d_n < -1$, then $\lambda_{H_n,d_n}$ is defined on $[\rho_-,\rho_+]$,
where $\rho_-$ is the only solution of $nH_nI_n(\rho)+d_n=-1$, and $\rho_+$ is the only solution of $nH_nI_n(\rho)+d_n=1$.
\item If $-1\leq d_n < 1$, then $\lambda_{H_n,d_n}$ is defined on $[0,\rho_+]$, where $\rho_+$
is defined as above. 
\end{enumerate}
Further, $\lambda_{H_n,d_n}$ is convex in the interior of its domain. 
Set $\rho_0$ to be the only solution of $nH_nI_n(\rho)+d_n = 0$.
In case $(1)$, we have $\lambda_{H_n,d_n}(\rho_-) = 0$, $\dot{\lambda}_{H_n,d_n}(\rho) < 0$ for $\rho_- < \rho < \rho_0$, $\dot{\lambda}_{H_n,d_n}(\rho) > 0$ for $\rho > \rho_0$, $\lambda_{H_n,d_n}(\rho_+) < 0$, and $\lim_{\rho \to \rho_+}\dot{\lambda}_{H_n,d_n}(\rho) = +\infty$.
In case $(2)$, one finds $\dot{\lambda}_{H_n,d_n}(0) = d_n^{1/n}/(1-d_n^{2/n})^{1/2}$, and $\lim_{d_n \to -1} \dot{\lambda}_{H_n,d_n}(0) = -\infty$.
For $d_n < 0$ the function $\lambda_{H_n,d_n}$ first decreases then increases, and the sign of $\lambda_{H_n,d_n}$ depends on the value of $d_n$, whereas for $d_n \geq 0$ the function $\lambda_{H_n,d_n}$ is increasing on the whole domain. 
Moreover, $\lim_{\rho \to 0} \ddot{\lambda}_{H_n,0}(\rho) = H_n{}^{1/n}$, and $\lim_{\rho \to \rho_0} \ddot{\lambda}_{H_n,d_n}(\rho) = +\infty$ (Figure~\ref{fig:7}).
\end{proposition}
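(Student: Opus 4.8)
The plan is to adapt the proof of Proposition~\ref{r-even-n=r}, the only substantive change being that $r=n$ is now odd. First I would specialize the integrand in \eqref{lambda} to $n=r$: since $2(n-r)/r=0$ the factor $\sinh^{2(n-r)/r}(\xi)$ equals $1$, and because $n$ is odd the root $(nH_nI_n(\xi)+d_n)^{1/n}$ is defined for every real value of its base. Hence $\lambda_{H_n,d_n}$ is well-defined precisely when
$$-1 < nH_nI_n(\rho)+d_n < 1.$$
By Remark~\ref{integ} the map $\rho\mapsto nH_nI_n(\rho)+d_n$ is strictly increasing from the value $d_n$ at $\rho=0$ to $+\infty$, so the upper bound forces $d_n<1$ and produces a unique $\rho_+$ with $nH_nI_n(\rho_+)+d_n=1$. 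For the lower bound I would distinguish $d_n<-1$, where a unique $\rho_->0$ solves $nH_nI_n(\rho_-)+d_n=-1$ and the domain is $[\rho_-,\rho_+]$, from $-1\le d_n<1$, where the lower bound already holds at $\rho=0$ and the domain is $[0,\rho_+]$. This is exactly the dichotomy of Proposition~\ref{r-even-n=r}, with the lower threshold $-1$ replacing $0$.

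Next I would read off the monotonicity and regularity. The sign of $\dot\lambda_{H_n,d_n}$ coincides with the sign of the odd root $(nH_nI_n+d_n)^{1/n}$, hence with that of $nH_nI_n+d_n$, which vanishes exactly at $\rho_0$; this gives $\dot\lambda<0$ on $(\rho_-,\rho_0)$ and $\dot\lambda>0$ on $(\rho_0,\rho_+)$, so $\lambda$ decreases then increases when $d_n<0$ and is increasing throughout when $d_n\ge0$. Substituting $\rho=0$ yields $\dot\lambda_{H_n,d_n}(0)=d_n^{1/n}/(1-d_n^{2/n})^{1/2}$, which tends to $-\infty$ as $d_n\to-1$, while at the endpoints where the base reaches $\pm1$ the denominator vanishes and $\dot\lambda\to+\infty$; convergence of the integral at $\rho_\pm$ follows from the order-$\tfrac12$ vanishing of $\sqrt{1-(nH_nI_n+d_n)^{2/n}}$, exactly as in Propositions~\ref{r-even-n>r1} and \ref{r-even-n=r}. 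For convexity I would specialize \eqref{second-derivative} to $n=r$: the numerator reduces to $nH_n\tanh^{n-1}(\rho)>0$, and since $n-1$ is even the factor $(nH_nI_n+d_n)^{(n-1)/n}$ in the denominator is a nonnegative even power of a real root, positive away from $\rho_0$ and zero at $\rho_0$. Thus $\ddot\lambda>0$ on the interior, giving convexity, with $\ddot\lambda\to+\infty$ at $\rho_0$ when $\rho_0>0$, and $\ddot\lambda_{H_n,0}(\rho)\to H_n^{1/n}$ as $\rho\to0$ using $nI_n(\rho)\approx\rho^n$ from Remark~\ref{integ}.

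The one genuinely new point, absent in the even case, is the claim $\lambda_{H_n,d_n}(\rho_+)<0$ in case~(1); I would treat it as in Proposition~\ref{prop:special-cases}\,(\ref{item2}). Writing $g(\rho)\coloneqq nH_nI_n(\rho)+d_n$, which is precisely the $n=r$ specialization of the function $g$ used there, one has $\lambda_{H_n,d_n}(\rho_+)=\int_{\rho_-}^{\rho_+} g(\xi)^{1/n}(1-g(\xi)^{2/n})^{-1/2}\,d\xi$ with $g(\rho_\pm)=\pm1$. Here $g''(\rho)=nH_n(n-1)\tanh^{n-2}(\rho)/\cosh^2(\rho)>0$, so $g$ is strictly convex and lies strictly below the secant line $s$ joining $(\rho_-,-1)$ and $(\rho_+,1)$. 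Since $x\mapsto x^{1/n}/\sqrt{1-x^{2/n}}$ is increasing and odd on $(-1,1)$, replacing $g$ by $s$ and changing variables gives
$$\lambda_{H_n,d_n}(\rho_+)<\int_{\rho_-}^{\rho_+}\frac{s(\xi)^{1/n}}{\sqrt{1-s(\xi)^{2/n}}}\,d\xi=\frac{\rho_+-\rho_-}{2}\int_{-1}^{1}\frac{v^{1/n}}{\sqrt{1-v^{2/n}}}\,dv=0,$$
the last integral vanishing by oddness. I expect this sign computation to be the main obstacle, since, unlike the even case, the profile can dip below the starting slice; the remaining assertions (the domains, the one-sided slopes, convexity, and the fact that the sign of $\lambda_{H_n,d_n}$ varies with $d_n$ for $-1\le d_n<0$, which follows from continuity in $d_n$ together with the case~(1) result and the increasing behavior at $d_n=0$) are routine once the above is in place.
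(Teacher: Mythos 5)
Your proposal is correct and follows essentially the same route as the paper: the domains, monotonicity, and convexity are read off by specializing the even-case argument to $n=r$ odd, and the sign $\lambda_{H_n,d_n}(\rho_+)<0$ is obtained exactly as in the paper by bounding the convex function $nH_nI_n+d_n$ by the secant line through $(\rho_-,-1)$ and $(\rho_+,1)$ and using the oddness of $x\mapsto x^{1/n}/\sqrt{1-x^{2/n}}$, with the intermediate cases handled by continuity in $d_n$. The only slip is immaterial: in case (1) the slope at $\rho_-$ tends to $-\infty$ (the base there is $-1$), not $+\infty$, but the proposition makes no claim about that endpoint and the convergence argument is unaffected.
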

\begin{figure}[htb]
\begin{center}
\begin{tikzpicture}[scale=1.3]
   \draw[->] (-1.2,0.5) -- (2,0.5);
  \draw[->] (0,-0.5) -- (0,1.5);
  \draw[thick] (0.3,0.5) .. controls (0.3,-1) and (1.4,-0.7) .. (1.4,0.3);
  \draw[dotted] (1.4,0.5) -- (1.4,0.3);
  \draw[dotted] (0.85,0.5) -- (0.85,-0.5);
  \node at (-0.7,1) {$d_n < -1$};
  \node at (0.3,0.7) {$\rho_-$};
  \node at (-0.15,-0.2) {$0$};
  \node at (1.5,0.7) {$\rho_+$};
  \node at (1.15,1.2) {$\lambda_{H_n,d_n}$};
  \node at (2,0.7) {$\rho$};
  \node at (0.9,0.7) {$\rho_0$};
  \node at (-0.2,1.5) {$t$};
\end{tikzpicture}
\qquad 
\begin{tikzpicture}[scale=1.3]
   \draw[->] (-1.2,0.5) -- (2,0.5);
  \draw[->] (0,-0.5) -- (0,1.5);
  \draw[thick,red] (0,0.5) .. controls (0.1,-0.1) and (1,-0.5) .. (1,0.9);
  \draw[thick,orange] (0,0.5) .. controls (0.5,0.8) and (0.6,1) .. (0.6,1.5);
  \draw[thick,violet] (0,0.5) .. controls (0.5,0.5) and (0.8,0.5) .. (0.8,1.2);
  \draw[thick,blue] (0,0.5) .. controls (0.2,-0.7) and (1.2,-0.5) .. (1.2,0.5);
  \draw[thick,black] (0,0.5) .. controls (0.1,-1) and (1.4,-0.7) .. (1.4,0);
  \draw[dotted] (1.4,0) -- (1.4,0.5);
  \draw[dotted] (1,0.5) -- (1,0.9);
  \draw[dotted] (0.8,1.2) -- (0.8,0.5);
  \draw[dotted] (0.6,0.5) -- (0.6,1.5);
  \node at (-1,1) {$-1 \leq d_n < 1$};
  \node at (-0.15,-0.2) {$0$};
  \node at (1.6,0.7) {$\rho_+$};
  \node at (1.5,1.2) {$\lambda_{H_n,d_n}$};
  \node at (2,0.7) {$\rho$};
  \node at (-0.2,1.5) {$t$};
\end{tikzpicture}
\end{center}
\caption{Behavior of $\lambda_{H_n,d_n}$ for $n$ odd and $H_n > 0$. 
When $-1 \leq d_n < 1$, we distinguish four cases, i.e.\ $d_n = -1$ (black), $-1 < d_n < 0$ (red), $d_n = 0$ (violet), $0 < d_n < 1$ (orange).
The blue curve corresponds to a value of $d_n \in (-1,0)$ for which $\lambda_{H_n,d_n}(\rho_+)$ vanishes.}
\label{fig:7}
\end{figure}
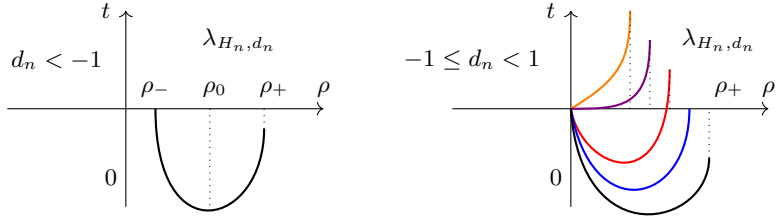

\begin{proof}
The only part of the proof differing  from the proof of Proposition \ref{r-even-n=r} is  about the sign  of 
$\lambda_{H_n,d_n}(\rho_+)$.
We look first at the case $d_n < -1$. Since $nH_nI_n+d_n$ is convex, $nH_nI_n(\rho)+d_n < s(\rho)$,
where $s$ is the line passing through the points $(\rho_-,-1)$ and $(\rho_+,1)$.
Now $nH_nI_n(\rho) + d_n < s(\rho)$ for $\rho \in (\rho_-,\rho_+)$, so we also have
$$\frac{(nH_nI_n(\rho)+d_n)^{\frac1n}}{\sqrt{1-(nH_nI_n(\rho)+d_n)^{\frac2n}}} < \frac{s(\rho)^{\frac1n}}{\sqrt{1-s(\rho)^{\frac2n}}},$$
as the function $x \mapsto x^{1/n}/\sqrt{1-x^{2/n}}$ is increasing.
But the integral of the latter quantity over $(\rho_-,\rho_+)$ is computed to be zero, as the integrand function is odd:
\begin{align*}
\int_{\rho_-}^{\rho_+} \frac{s(\xi)^{\frac1n}}{\sqrt{1-s(\xi)^{\frac2n}}}\,d\xi = \frac{\rho_+-\rho_-}{2}\int_{-1}^1 \frac{u^{\frac1n}}{\sqrt{1-u^{\frac2n}}}\,du = 0.
\end{align*}
This shows $\lambda_{H_n,d_n}(\rho_+) < 0$. The same holds when $d_n = -1$, the only difference being that $\rho_- = 0$.
Since $\lambda_{H_n,d_n}(\rho_+)$ depends continuously on $d_n$, and for $d_n \geq 0$ we have $\lambda_{H_n,d_n}(\rho_+) > 0$, 
there must be a $d_n \in (-1,0)$ such that $\lambda_{H_n,d_n}(\rho_+) = 0$.
\end{proof}

As in the case of $r$ even (cf.\ Proposition \ref{prop:c2-reg}), before proceeding with the classification result, we study the regularity of the $H_r$-hypersurface generated by a rotation of the graph of $\lambda_{H_r,d_r}.$ 

\begin{proposition}
\label{prop:c2reg-disp}
Let $n\geq r$, $r$ odd. Then the hypersurface generated by the graph of $\lambda_{H_r,d_r}$ is of class $C^2$ at $\rho = \rho_+$, when the latter exists, and it is of class $C^2$ at $\rho=\rho_-$ if and only if $n>r$, or $n=r$ and $d_n\in(-\infty,-1)\cup\{0\}$. When $n=r$ and $d_r\in[-1,0)\cup(0,1)$, there is a conical singularity at $\rho=0$. Moreover, if $r\geq 3$ and $d_r \neq 0$, the hypersurface is $C^2$-singular at any critical point of the function $\lambda_{H_r,d_r}$.

\end{proposition}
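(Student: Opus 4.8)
The plan is to follow the scheme of Proposition \ref{prop:c2-reg}: since the hypersurface is rotationally symmetric with principal curvatures given by \eqref{data}, its regularity is governed by $|A|^2=(n-1)k_1^2+k_n^2$, and $C^2$ regularity at a boundary value of $\rho$ is equivalent to the principal curvatures extending continuously there, so that $|A|^2$ stays bounded. The first step is to record the simplified expression for $k_n$ obtained by feeding \eqref{first-integ} into \eqref{second-derivative}: from $nH_rI_{n,r}+d_r=\sinh^{n-r}(\rho)\,\dot\lambda^r/(1+\dot\lambda^2)^{r/2}$ one gets $(nH_rI_{n,r}+d_r)^{2/r}=\sinh^{2(n-r)/r}(\rho)\,\dot\lambda^2/(1+\dot\lambda^2)$, so the factor $(1+\dot\lambda^2)^{3/2}$ cancels and $k_n$ becomes a rational expression in $\sinh,\cosh$ and $nH_rI_{n,r}+d_r$. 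With this in hand, the case $\dot\lambda\to+\infty$ — which occurs at $\rho_+$ whenever it exists and at $\rho_-$ when $n>r$ and $d_r>0$ — is verbatim the even case: $k_i\to\cotgh(\rho_\pm)$ and $k_n$ tends to a finite limit, so the hypersurface is $C^2$ there.

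The genuinely new case for $r$ odd is $\dot\lambda\to-\infty$, which by Propositions \ref{r-odd-n>r} and \ref{r-odd-n=r} occurs exactly at $\rho_-$ when $n>r$ and $d_r<0$, and when $n=r$ and $d_n<-1$. Here I would argue that $k_i=\cotgh(\rho)\dot\lambda/(1+\dot\lambda^2)^{1/2}\to-\cotgh(\rho_-)$ is finite, and that $k_n$ again converges. The crucial point is that, $r$ being odd while $r-1$ is even, the factor $(nH_rI_{n,r}+d_r)^{(r-1)/r}$ in the denominator of \eqref{second-derivative} is positive even though its base is negative at $\rho_-$ (where $nH_rI_{n,r}+d_r=-\sinh^{n-r}(\rho_-)$, resp.\ $=-1$); a direct substitution then gives a finite value of $k_n$, so $|A|^2$ is bounded and the hypersurface is $C^2$ at $\rho_-$. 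This is precisely where the odd case diverges from the even one, in which $\dot\lambda(\rho_-)=0$ and $\ddot\lambda$ blows up, producing a cusp. Together with the case $d_r=0$ (for which $\rho_-=0$ and the estimates $\dot\lambda\approx H_r^{1/r}\rho$, $\ddot\lambda\approx H_r^{1/r}$ of Proposition \ref{r-odd-n>r2} give $k_i\to H_r^{1/r}$, as in the even case), this establishes the ``if'' direction: $C^2$ regularity at $\rho_-$ holds whenever $n>r$, or $n=r$ with $d_n\in(-\infty,-1)\cup\{0\}$.

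For the ``only if'' direction I would treat the remaining cases $n=r$, $d_n\in[-1,0)\cup(0,1)$. By Proposition \ref{r-odd-n=r} these are exactly the cases with $\rho_-=0$ and $\dot\lambda(0)\neq 0$ (finite and nonzero for $d_n\in(-1,0)\cup(0,1)$, and $\dot\lambda(0)=-\infty$ for $d_n=-1$). In each one the profile curve meets the rotation axis $\{\rho=0\}$ transversally rather than tangentially, so the generated hypersurface has no tangent hyperplane at the corresponding vertex: it is a conical singularity, which in particular is not $C^2$ (indeed not even $C^1$). This simultaneously proves the conical-singularity assertion and completes the equivalence.

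Finally, for the last claim let $r\geq 3$ and $d_r\neq 0$, and let $\rho_0$ be an interior critical point of $\lambda_{H_r,d_r}$; by \eqref{first-integ} such a point satisfies $nH_rI_{n,r}(\rho_0)+d_r=0$, and it exists precisely when $d_r<0$ (Propositions \ref{r-odd-n>r}, \ref{r-odd-n=r}), so the claim is substantive there. At $\rho_0$ the denominator factor $(nH_rI_{n,r}+d_r)^{(r-1)/r}$ in \eqref{second-derivative} vanishes while the numerator reduces to a strictly positive quantity, whence $\ddot\lambda_{H_r,d_r}(\rho_0)=+\infty$; since $\dot\lambda(\rho_0)=0$ this forces $k_n\to+\infty$ and $|A|^2$ to blow up, so the hypersurface is $C^2$-singular at every critical point. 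The main obstacle throughout is the careful sign bookkeeping of the fractional powers of $nH_rI_{n,r}+d_r$, which can be negative when $r$ is odd — it is exactly this sign that separates the smooth vertical-tangent points ($\dot\lambda\to-\infty$) from the singular critical points ($\dot\lambda=0$) — together with a clean justification that a transversal intersection with the axis yields a genuine, non-$C^2$, conical singularity.
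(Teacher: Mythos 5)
Your proposal is correct and follows essentially the same route as the paper: $C^2$ regularity is read off from boundedness of $|A|^2$ via \eqref{data} and \eqref{second-derivative}, the vertical-tangent and $d_r=0$ cases are handled exactly as in Proposition \ref{prop:c2-reg} (the paper simply cites that argument, whereas you spell out the sign of $(nH_rI_{n,r}+d_r)^{(r-1)/r}$ for $\dot\lambda\to-\infty$), and the critical-point case is word for word the paper's computation that $nH_rI_{n,r}(\rho_0)+d_r=0$ forces $\ddot\lambda\to+\infty$ and hence $k_n\to+\infty$. The only quibble is your description of the boundary case $n=r$, $d_n=-1$: there $\dot\lambda_{H_n,-1}(0)=-\infty$, so the profile curve meets the axis tangentially rather than transversally; the point is still singular (indeed $k_1=\cotgh(\rho)\dot\lambda/(1+\dot\lambda^2)^{1/2}\to-\infty$ as $\rho\to 0$), but the mechanism is a horn-type axis point rather than a genuine transversal cone.
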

\begin{proof}
The first part of the proof is an application of the same argument as in Proposition \ref{prop:c2-reg}. If $r=1$ it is well known that the corresponding hypersurface is smooth. Now let $r\geq 3$ and let $\rho_0$ be a critical point of $\lambda_{H_r,d_r}$. By \eqref{lambda} we have that $nH_rI_{n,r}(\rho_0)+d_r=0$. By \eqref{second-derivative} it follows that $\lim_{\rho\rightarrow\rho_0}\ddot\lambda_{H_r,d_r}(\rho)=+\infty$. Using \eqref{data} we can see that $\lim_{\rho\rightarrow\rho_0}k_n(\rho)=+\infty$, hence $|A|^2$ blows up near $\rho_0$.
\end{proof}

\begin{remark} The same kind of singularities appears in the construction of rotationally invariant higher order translators, i.e.\ hypersurfaces with $H_r=g(\nu,\partial/\partial t)$, where $r>1$ and $\nu$ is the unit normal, see \cite{lima-pipoli}.
\end{remark}
 
We now proceed with the classification results. We recover results by Bérard--Sa Earp \cite{berard-earp}, Elbert--Sa Earp \cite[Section 6]{elb-earp} and de Lima--Manfio--dos Santos \cite[Theorem 1 and 2]{lima}. 
Recall that a slice is any subspace $\mathbb H^n\times\{t\} \subset \mathbb H^n \times \mathbb R$, and its origin was defined as its intersection with the $t$-axis.

\begin{theorem}
\label{str-thm5}
Assume $r$ odd, $n>r$, and $d_r < 0$. 
By reflecting the rotational hypersurface given by the graph of $ \lambda_{H_r,d_r}$ across suitable slices, we get an immersed $H_r$-hypersurface. 
\begin{enumerate}
\item If $0<H_r \leq (n-r)/n$, the hypersurface generated by the graph of $\lambda_{H_r,d_r}$, together with its reflection across the slice
$\mathbb H^n \times \{0\}$, is an annulus with self-intersections along a sphere centered at the origin of the slice $\mathbb H^n \times \{0\}$. 
The hypersurface is of class $C^2$ for $r=1$, and of class $C^1$ for $r \geq 3$. 
In the latter case, the singular set consists of two spheres of radius $\rho_0$ contained in the slices $\mathbb H^n \times \{\pm \lambda_{H_r,d_r}(\rho_0)\}$ and centered at their origin.
\item If $H_r > (n-r)/n$, then we distinguish two subcases. If $r=1$, the hypersurface generated by the graph of $\lambda_{H_r,d_r}$,
together with its reflection across the slice $\mathbb H^n \times \{0\}$ and vertical translations of integral multiples of 
$2\lambda_{H_r,d_r}(\rho_+)$, is a $C^2$ nodoid  with self-intersections along infinitely many spheres centered at the origin of distinct slices. 
If $r \geq 3$, we have two possibilities. First, one may get nodoids as in the $r=1$ case, except that they are not $C^2$-regular (singularities appear along infinitely many spheres of radius $\rho_0$ in distinct slices).
Second, one may get compact hypersurfaces with the topology of $S^1 \times S^{n-1}$ with $C^2$ singularities along two spheres of radius $\rho_0$ contained in the slices $\mathbb H^n \times \{\pm \lambda_{H_r,d_r}(\rho_0)\}$ and centered at their origin.
\end{enumerate}
\end{theorem}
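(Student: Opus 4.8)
The plan is to read off every geometric feature directly from the analytic picture of $\lambda_{H_r,d_r}$ already established, assembling the hypersurface by reflections across horizontal slices and reading the topology, self-intersections, and regularity off the profile curve. The three ingredients are Proposition \ref{r-odd-n>r} (shape of $\lambda_{H_r,d_r}$), Proposition \ref{prop:special-cases} (sign of $\lambda_{H_r,d_r}(\rho_+)$), and Proposition \ref{prop:c2reg-disp} (regularity); throughout, the reflections preserve the constant $r$-th mean curvature by Remark \ref{signs}, since reflection across a slice $\mathbb H^n\times\{t_0\}$ is an ambient isometry and, for $r$ odd, the sign is restored by the accompanying change $\nu\mapsto-\nu$. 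First I would recall the shape of the profile curve $\gamma(\rho)=(\tanh(\rho/2)\xi,\lambda_{H_r,d_r}(\rho))$: by Proposition \ref{r-odd-n>r} it leaves the slice $\mathbb H^n\times\{0\}$ at hyperbolic radius $\rho_-$ with a vertical tangent ($\dot\lambda\to-\infty$), decreases on $(\rho_-,\rho_0)$ to its minimum $\lambda_{H_r,d_r}(\rho_0)<0$, then increases, and in case $(2)$ ends at radius $\rho_+$ again with a vertical tangent ($\dot\lambda\to+\infty$). The vertical tangents at $\rho_\pm$ are exactly the condition ensuring that reflection across the horizontal slice through the corresponding endpoint glues the reflected copy to $\gamma$ in a $C^1$ fashion, and where in addition $|A|^2$ stays bounded — which by Proposition \ref{prop:c2reg-disp} happens at $\rho_-$ and $\rho_+$ because $n>r$ — the gluing is $C^2$.

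For case $(1)$ I would reflect $\gamma$ across $\mathbb H^n\times\{0\}$. Since $\lambda_{H_r,d_r}$ is unbounded and $\rho\geq\rho_->0$ throughout, the union is a complete profile homeomorphic to $\mathbb R$ that never meets the rotation axis, so the rotated hypersurface is topologically $S^{n-1}\times\mathbb R$, an annulus. In a surface of revolution a self-intersection is precisely a transversal double point of the profile in the half-plane; here $\gamma$ and its mirror image coincide only where $\lambda_{H_r,d_r}=0$, namely at the gluing point $\rho_-$ and at the second zero $\rho_1>\rho_0$, where the two branches cross transversally with slopes $\pm\dot\lambda_{H_r,d_r}(\rho_1)\neq0$. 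Rotating, this is a single sphere of radius $\rho_1$ in $\mathbb H^n\times\{0\}$. The regularity dichotomy then follows from Proposition \ref{prop:c2reg-disp}: for $r=1$ everything is $C^2$, whereas for $r\geq3$ the second fundamental form blows up only at the critical point $\rho_0$ (where $\dot\lambda=0$ so the tangent plane is horizontal and well defined, yet $\ddot\lambda\to+\infty$), producing the two $C^1$-singular spheres of radius $\rho_0$ at heights $\pm\lambda_{H_r,d_r}(\rho_0)$.

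For case $(2)$ the domain is the bounded interval $[\rho_-,\rho_+]$, and the construction is the Delaunay-type stacking announced in the statement: reflect $\gamma$ across $\mathbb H^n\times\{0\}$ and translate vertically by integer multiples of $2\lambda_{H_r,d_r}(\rho_+)$, each successive copy gluing at a $\rho_+$- or $\rho_-$-turning point. When $r=1$, Proposition \ref{prop:special-cases}(\ref{item1}) gives $\lambda_{H_1,d_1}(\rho_+)>0$, so the profile is genuinely periodic and never closes; because the fundamental arc dips below its own endpoints (it passes through $\lambda_{H_r,d_r}(\rho_0)<0$), consecutive copies overlap and the rotated surface is a $C^2$ nodoid self-intersecting along one sphere per period. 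When $r\geq3$, Proposition \ref{prop:special-cases}(\ref{item2}) shows $\lambda_{H_r,d_r}(\rho_+)$ may be positive, negative, or zero. If it is nonzero the same stacking yields an infinite nodoid, now only $C^1$, with singular spheres of radius $\rho_0$ recurring in each period. If instead $\lambda_{H_r,d_r}(\rho_+)=0$, then $\gamma$ and its single reflection across $\mathbb H^n\times\{0\}$ already close into an embedded loop — both endpoints lie on $t=0$ with vertical tangents, $\gamma$ staying below and its reflection above — whose rotation is a compact embedded $S^1\times S^{n-1}$ carrying exactly the two spheres of radius $\rho_0$ at heights $\pm\lambda_{H_r,d_r}(\rho_0)$ where $C^2$-regularity fails.

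The main obstacle I anticipate is bookkeeping rather than a single hard estimate. One must verify that the reflected and translated pieces glue with the claimed regularity — this is where the vertical-tangent condition at $\rho_\pm$ and the boundedness of $|A|^2$ from Proposition \ref{prop:c2reg-disp} are essential — and one must correctly locate the self-intersection loci and distinguish the compact case from the nodoid case purely through the vanishing of $\lambda_{H_r,d_r}(\rho_+)$. The one genuinely conceptual point, delicate precisely because $r$ is odd, is that a global orientation can be chosen so that $H_r$ equals the same positive constant across all glued pieces; this is exactly what Remark \ref{signs} provides.
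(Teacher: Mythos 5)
The paper gives no explicit proof of Theorem \ref{str-thm5}; it is stated as a direct consequence of Propositions \ref{r-odd-n>r}, \ref{prop:special-cases}, and \ref{prop:c2reg-disp}, and your proposal carries out exactly that synthesis: reading topology and self-intersections off the profile curve, using the vertical tangents at $\rho_\pm$ for the gluing, locating the $C^2$-singular spheres at the critical point $\rho_0$ for $r\geq 3$, and separating the nodoid from the compact $S^1\times S^{n-1}$ case via the sign of $\lambda_{H_r,d_r}(\rho_+)$. This is correct and essentially the paper's own route.
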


\begin{theorem}
\label{str-thm6}
Assume $r$ odd, $n>r$, and $d_r = 0$. 
Then the rotational hypersurface given by the graph of $\lambda_{H_r,0}$ is a complete embedded $H_r$-hypersurface, possibly after reflection across a suitable slice.
\begin{enumerate}
\item If $0<H_r \leq (n-r)/n$, the hypersurface generated by the graph of $\lambda_{H_r,0}$ is  an entire graph of class $C^2$ tangent to the slice $\mathbb H^n \times \{0\}$ at the origin.
\item If $H_r > (n-r)/n$,  the hypersurface generated by the graph of $\lambda_{H_r,0}$, together with its reflection across the slice $\mathbb H^n \times \{\lambda_{H_r,0}(\rho_+)\}$, is a class $C^2$ sphere.
\end{enumerate}
\end{theorem}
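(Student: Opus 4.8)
The plan is to read off the stated geometric and topological conclusions directly from the analytic description of $\lambda_{H_r,0}$ in Proposition~\ref{r-odd-n>r2}, together with the regularity recorded in Proposition~\ref{prop:c2reg-disp}, in complete analogy with the even case treated in Theorem~\ref{str-thm2}. No new estimate is really needed: the theorem is an assembly of facts already proved.

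For case $(1)$, recall from Proposition~\ref{r-odd-n>r2} that $\lambda_{H_r,0}$ is defined on all of $[0,+\infty)$, is increasing and convex, and satisfies $\lambda_{H_r,0}(0)=0=\lim_{\rho\to0}\dot\lambda_{H_r,0}(\rho)$. Since $\rho\mapsto\tanh(\rho/2)$ is a bijection of $[0,+\infty)$ onto $[0,1)$, rotating the profile curve about the $t$-axis sweeps out the entire Poincar\'e ball; hence the resulting hypersurface is a vertical graph over the whole slice $\mathbb H^n\times\{0\}$, and as such is embedded and complete. The vanishing of $\lim_{\rho\to0}\dot\lambda_{H_r,0}$ forces the tangent plane at the vertex to be horizontal, that is, tangency to $\mathbb H^n\times\{0\}$ at the origin. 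Smoothness away from the vertex is clear, while $C^2$ regularity at $\rho=\rho_-=0$ is exactly the content of Proposition~\ref{prop:c2reg-disp} in the range $n>r$; no reflection is needed here.

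For case $(2)$, Proposition~\ref{r-odd-n>r2} gives that $\lambda_{H_r,0}$ is defined on $[0,\rho_+]$, is increasing and convex, with $\lambda_{H_r,0}(0)=0$, $\lim_{\rho\to0}\dot\lambda_{H_r,0}(\rho)=0$, and $\lim_{\rho\to\rho_+}\dot\lambda_{H_r,0}(\rho)=+\infty$. Rotating the graph produces a convex cap which is a vertical graph over the closed geodesic ball of radius $\rho_+$: it is tangent to $\mathbb H^n\times\{0\}$ at its vertex and, because the profile curve becomes vertical as $\rho\to\rho_+$, it meets the slice $\mathbb H^n\times\{\lambda_{H_r,0}(\rho_+)\}$ orthogonally along a sphere. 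Reflecting this cap across that slice and gluing the two copies along their common boundary yields a compact embedded hypersurface with the topology of a sphere.

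The only step requiring care is the $C^2$ regularity of the assembled surface. At the two vertices this is again provided by Proposition~\ref{prop:c2reg-disp} with $n>r$, while along the reflection slice the orthogonal (vertical) contact of the profile curve guarantees that the two pieces match to first order, and the finiteness of the limiting principal curvatures at $\rho=\rho_+$ established in Proposition~\ref{prop:c2reg-disp} ensures a $C^2$ join by the symmetry of the reflection. I expect this matching across the equator to be the main obstacle; once it is in place, the identification of the hypersurface as a $C^2$ topological sphere follows exactly as in the even case of Theorem~\ref{str-thm2}.
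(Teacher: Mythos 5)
Your proposal is correct and follows essentially the same route as the paper, which states Theorem \ref{str-thm6} without a separate proof precisely because it is an assembly of Proposition \ref{r-odd-n>r2} (domain, monotonicity, convexity, boundary behavior of $\lambda_{H_r,0}$) and Proposition \ref{prop:c2reg-disp} (boundedness of $|A|^2$ at $\rho_-=0$ and at $\rho_+$), exactly as you do. Your extra care about the $C^2$ matching along the reflection slice is consistent with the paper's argument in Propositions \ref{prop:c2-reg} and \ref{prop:c2reg-disp}, where the finite limits of the principal curvatures at $\rho_+$ are what justify the regularity of the doubled hypersurface.
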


\begin{theorem}
\label{str-thm7}
Assume $r$ odd, $n>r$, and $d_r > 0$. 
By reflecting the rotational hypersurface given by the graph of $\lambda_{H_r,d_r}$ across suitable  slices, we get a complete non-compact embedded $H_r$-hypersurface.
\begin{enumerate}
\item If $0<H_r \leq (n-r)/n$, the hypersurface generated by the graph of $\lambda_{H_r,d_r}$, together with its reflection across the slice $\mathbb H^n \times \{0\}$, is a class $C^2$ annulus.
When $n=r+1$ and $H_r=1/(r+1)$ the same holds, provided that $d_r$ is smaller than $(r-1)!!/(r-2)!!$ for $r>1$, or smaller than $1$ for $r=1$.
\item If $H_r > (n-r)/n$, the hypersurface generated by the graph of $\lambda_{H_r,d_r}$ together with its reflections  across the slices $\mathbb H^n \times \{k\lambda_{H_r,d_r}(\rho_+)\}$, $k \in \mathbb Z$, is a class $C^2$ onduloid.
\end{enumerate}
\end{theorem}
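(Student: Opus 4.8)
The plan is to follow the even-$r$ argument of Theorem \ref{str-thm3} almost verbatim, feeding in the qualitative description of $\lambda_{H_r,d_r}$ established in the proposition that analyzes this regime ($r$ odd, $n>r$, $d_r>0$) together with the regularity statement of Proposition \ref{prop:c2reg-disp}. The common thread is that in every case one has $\lambda_{H_r,d_r}(\rho_-)=0$ and $\lim_{\rho\to\rho_-}\dot{\lambda}_{H_r,d_r}(\rho)=+\infty$, so the profile curve meets the slice $\mathbb H^n\times\{0\}$ along the sphere of radius $\rho_->0$ with a vertical tangent; since $n>r$, Proposition \ref{prop:c2reg-disp} tells us the generated hypersurface is $C^2$ at $\rho=\rho_-$ (and at $\rho=\rho_+$ whenever it exists), which is exactly the regularity required for reflected copies to glue smoothly.

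First I would treat cases (1)--(2), where $0<H_r\leq(n-r)/n$ and $\lambda_{H_r,d_r}$ is defined on $[\rho_-,+\infty)$, increasing, concave, and unbounded. Reflecting the graph across the slice $\mathbb H^n\times\{0\}$, the vertical tangent and $C^2$-regularity at $\rho_-$ make the union a complete $C^2$ hypersurface, while strict monotonicity of $\lambda_{H_r,d_r}$ (so the profile is a graph over $\rho\geq\rho_->0$, staying off the $t$-axis) yields embeddedness. Its level sets are the spheres $S^{n-1}$ of radius $\tanh(\rho/2)$, so it is an annulus $S^{n-1}\times\mathbb R$. The only delicate point is the borderline case $n=r+1$, $H_r=(n-r)/n$: here I must show that $\rho_-$ exists precisely when $d_r<(r-1)!!/(r-2)!!$ (and $d_1<1$ when $r=1$), which is the odd analogue of the bound in Proposition \ref{r_even-2}.

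Next I would handle case (3), where $H_r>(n-r)/n$ and $\lambda_{H_r,d_r}$ is defined on the compact interval $[\rho_-,\rho_+]\subset(0,+\infty)$, increasing with vertical tangents at both endpoints and a single inflection point. I reflect first across $\mathbb H^n\times\{0\}$, then across $\mathbb H^n\times\{\lambda_{H_r,d_r}(\rho_+)\}$, and iterate via the translations $t\mapsto t+2k\lambda_{H_r,d_r}(\rho_+)$, $k\in\mathbb Z$. The $C^2$-regularity at both $\rho_-$ and $\rho_+$ makes each gluing $C^2$, and monotonicity on the fundamental piece gives a complete non-compact embedded onduloid of type $S^{n-1}\times\mathbb R$, with necks of radius $\tanh(\rho_-/2)$ and bulges of radius $\tanh(\rho_+/2)$ alternating periodically.

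The genuinely new input, and hence the main obstacle, is the sharp bound on $d_r$ in the borderline subcase $n=r+1$, $H_r=(n-r)/n$ of point (2). Its proof rests on the odd-$r$ closed form for $I_{r+1,r}$ and the odd-$r$ counterpart of Lemma \ref{important_identity} recorded just above, mimicking the computation of $\lim_{\rho\to+\infty}f(\rho)$ carried out in the proof of Proposition \ref{r_even-2} for the function $f(\rho)=\sinh^{n-r}(\rho)-(nH_rI_{n,r}(\rho)+d_r)$. Since $f(0)=-d_r<0$ and $f$ is increasing, $\rho_-$ exists exactly when the limit at infinity is positive; substituting the closed form and letting the $\tanh$ powers tend to $1$, the odd-$r$ identity collapses the bracketed sum so that $\lim_{\rho\to+\infty}f(\rho)=(r-1)!!/(r-2)!!-d_r$ for $r\geq 3$, and $\lim_{\rho\to+\infty}f(\rho)=1-d_1$ for $r=1$ via $I_{2,1}(x)=\cosh(x)-1$, which gives the stated thresholds. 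Everything else, namely the endpoint values and one-sided limits of $\dot{\lambda}_{H_r,d_r}$, the convexity/concavity, and the boundedness of $|A|^2$ underlying $C^2$-regularity, is already in hand, leaving only the topological bookkeeping of the reflections and the verification of embeddedness, both of which follow from the strict monotonicity of $\lambda_{H_r,d_r}$.
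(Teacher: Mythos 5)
Your proposal is correct and follows essentially the same route as the paper: Theorem \ref{str-thm7} is there deduced from the unnumbered proposition describing $\lambda_{H_r,d_r}$ for $r$ odd, $n>r$, $d_r>0$ (itself proved by mimicking Proposition \ref{r_even-2}, including the computation of $\lim_{\rho\to+\infty}f(\rho)$ via the odd-$r$ closed form for $I_{r+1,r}$ and the odd-$r$ analogue of Lemma \ref{important_identity}, which yields the thresholds $(r-1)!!/(r-2)!!$ for $r>1$ and $1$ for $r=1$), combined with the $C^2$-regularity at $\rho_\pm$ from Proposition \ref{prop:c2reg-disp} and the reflection/gluing bookkeeping. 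Your verification of the borderline limit, including the $r=1$ case via $I_{2,1}(x)=\cosh(x)-1$, matches the paper's intended argument.
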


\begin{theorem}
\label{str-thm8}
Assume $n=r$ odd and $H_n > 0$. Then the hypersurface generated by the graph of $\lambda_{H_n,d_n}$, together with its reflection across the slice $\mathbb H^n \times \{\lambda_{H_n,d_n}(\rho_+)\}$ is a class $C^2$ sphere  if  $d_n = 0$, and a peaked sphere if $0 < d_n < 1$.

When $-1 \leq d_n < 0$, the hypersurface generated by the graph of $\lambda_{H_n,d_n}$, together with its reflections across suitable slices, has self-intersections and we have three possibilities: it may be a generalized horn torus, a portion of generalized spindle torus, or a nodoid. In all cases, the hypersurface has $C^2$ singularities, cf.\ Table \ref{table:3}.

When $d_n < -1$, the hypersurface generated by the graph of $\lambda_{H_n,d_n}$, together with its reflection across 
the slice $\mathbb H^n \times \{0\}$ and vertical translations of integral multiples of 
$2\lambda_{H_n,d_n}(\rho_+)$, is an immersed nodoid with $C^2$ singularities along infinitely many spheres of radius $\rho_0$ in distinct slices and centered at their origin.
\end{theorem}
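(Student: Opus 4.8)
The overall plan is to build each hypersurface from the profile curve $\lambda_{H_n,d_n}$ analyzed in Proposition~\ref{r-odd-n=r}, feeding its monotonicity, endpoint tangents, and the sign of $\lambda_{H_n,d_n}(\rho_+)$ into the regularity statement of Proposition~\ref{prop:c2reg-disp}, and then rotating the completed curve about the $t$-axis. Since $n=r$ is odd and $n\geq2$, we always have $r=n\geq3$, so the critical-point clause of Proposition~\ref{prop:c2reg-disp} applies whenever $d_n\neq0$. The two elementary operations are reflection across a horizontal slice $\mathbb{H}^n\times\{c\}$ and vertical translation; I would use these to close up or to continue the generating curve, and then read off the topology and the singular set of the resulting surface of revolution.

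First I would dispose of $d_n\geq0$. For $d_n=0$, Proposition~\ref{r-odd-n=r}(2) gives a curve on $[0,\rho_+]$ leaving the origin of the slice with horizontal tangent, increasing and convex, with a vertical tangent at $\rho_+$; Proposition~\ref{prop:c2reg-disp} guarantees $C^2$ regularity at $\rho=0$, and reflecting across $\mathbb{H}^n\times\{\lambda_{H_n,0}(\rho_+)\}$ glues the two halves with matching vertical tangents at the equator, yielding a $C^2$ sphere. For $0<d_n<1$ the picture is identical except that $\dot{\lambda}_{H_n,d_n}(0)=d_n^{1/n}/(1-d_n^{2/n})^{1/2}>0$, so Proposition~\ref{prop:c2reg-disp} produces a conical point at each pole, whence a peaked sphere.

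Next I would treat $d_n<-1$. By Proposition~\ref{r-odd-n=r}(1) the curve is defined on $[\rho_-,\rho_+]$, vanishes at $\rho_-$, has vertical tangents at both endpoints, dips to a minimum at $\rho_0$, and satisfies $\lambda_{H_n,d_n}(\rho_+)<0$. Reflecting this fundamental arc across $\mathbb{H}^n\times\{0\}$ and translating by integer multiples of $2\lambda_{H_n,d_n}(\rho_+)$ yields a complete periodic immersion; the failure of monotonicity forces overlaps in the $t$-direction, so the surface self-intersects along infinitely many spheres, and the $C^2$ singularities located by Proposition~\ref{prop:c2reg-disp} at the critical points sit on the spheres of radius $\rho_0$ in distinct slices. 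This is the immersed nodoid.

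The main obstacle is the range $-1\leq d_n<0$, where both the topology and the singular set depend delicately on $d_n$. Here Proposition~\ref{r-odd-n=r}(2) gives a curve on $[0,\rho_+]$ leaving the axis (the boundary value $d_n=-1$ being a degenerate limit, with $\dot{\lambda}_{H_n,d_n}(0)=-\infty$), decreasing to a minimum at $\rho_0$, and then increasing to a vertical tangent at $\rho_+$; Proposition~\ref{prop:c2reg-disp} places a conical singularity on the axis and a $C^2$ singularity on the sphere of radius $\rho_0$. Crucially, the proof of Proposition~\ref{r-odd-n=r} shows that $\lambda_{H_n,d_n}(\rho_+)$ changes sign across this range---negative for $d_n$ near $-1$ and positive as $d_n\to0^-$---vanishing at an intermediate value. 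The plan is to complete the fundamental arc by reflection across $\mathbb{H}^n\times\{\lambda_{H_n,d_n}(\rho_+)\}$ (and, when needed, by further reflections and translations) and to decide, from the sign of $\lambda_{H_n,d_n}(\rho_+)$ together with the axis behavior, whether the completed curve closes into a loop or continues periodically: this is what separates the generalized horn torus, the portion of generalized spindle torus, and the nodoid. The genuinely delicate step is the self-intersection bookkeeping near the axis, together with the verification that consecutive copies join with the stated $C^2$ regularity, so that each regime can be matched to the correct classical surface as recorded in Table~\ref{table:3}; the pointwise analysis of the curve itself is already supplied by Propositions~\ref{r-odd-n=r} and~\ref{prop:c2reg-disp}.
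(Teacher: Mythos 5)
Your proposal is correct and follows essentially the same route as the paper, which states Theorem \ref{str-thm8} without a separate proof and derives it directly from the curve analysis in Proposition \ref{r-odd-n=r} (monotonicity, endpoint tangents, and the sign trichotomy of $\lambda_{H_n,d_n}(\rho_+)$ on $-1\leq d_n<0$) combined with the regularity statement of Proposition \ref{prop:c2reg-disp}, assembling the complete hypersurfaces by the indicated reflections and vertical translations. Your identification of the sign of $\lambda_{H_n,d_n}(\rho_+)$ as the parameter separating the nodoid, generalized horn torus, and spindle-torus portion, and of the singular set as the axis points plus the spheres of radius $\rho_0$, matches Table \ref{table:3} exactly.
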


Tables \ref{table:1}--\ref{table:3} summarize the above results. 
We describe the shape of the hypersurfaces and specify their homeomorphism type when the topology is easily described.

\begin{table}[ht]
  \centering
  \caption{Rotation $H_r$-hypersurfaces in $\mathbb H^n \times \mathbb R$ with $H_r > (n-r)/n$}
 \begin{tabular}{p{25mm}p{30mm}p{40mm}M{10mm}} \toprule
        \textbf{Parameters} & \textbf{Shape / Topology} & \textbf{Singularities} & \textbf{Figure} \\ \midrule
        $d_r>0$  & onduloid / $S^{n-1} \times \mathbb R$ & \ding{55} & \ref{fig:2}, \ref{fig:6} \\ \cmidrule{1-4}
        $d_r=0$  & sphere / $S^n$ & \ding{55} & \ref{fig:1}, \ref{fig:5} \\ \cmidrule{1-4}
        $d_r<0$, $r$ even & singular onduloid / $S^{n-1} \times \mathbb R$ & infinitely many copies of $S^{n-1}$ given by cusps in horizontal slices & \ref{fig:1}\\ \cmidrule{1-4}
        \multirow{2}{*}[-2.5em]{$d_r<0$, $r$ odd}  & nodoid  & $|A|^2\rightarrow\infty$ on infinitely many copies of $S^{n-1}$ in horizontal slices if $r \geq 3$, else smooth & \ref{fig:4} \\ \cmidrule(l){2-4}
         & $S^{n-1} \times S^1$ & $|A|^2\rightarrow\infty$ on two copies of $S^{n-1}$ in horizontal slices & \ref{fig:4} \\ \bottomrule
    \end{tabular}
\label{table:1}
\end{table}
\begin{remark}
Let us comment on the last case of Table \ref{table:1}, i.e.\ $d_r<0$ and $r$ odd. 
Both types of hypersurfaces noted there occur depending on the value of $H_r$ and $d_r$. Also, $S^{n-1} \times S^1$ occurs only if $r \geq 3$.
As $\lambda_{H_r,d_r}(\rho_+) \neq 0$ gets closer to the $\rho$-axis, the corresponding nodoids get more self-intersections, and the topology of the hypersurface becomes non-trivial.
\end{remark}

\begin{table}[ht]
  \centering
  \caption{Rotation $H_r$-hypersurfaces in $\mathbb H^n \times \mathbb R$ with $0<H_r \leq (n-r)/n$}
    \begin{tabular}{p{25mm}p{30mm}p{40mm}M{10mm}} \toprule
        \textbf{Parameters}   & \textbf{Shape / Topology} & \textbf{Singularities} & \textbf{Figure} \\ \midrule
        $d_r>0$  & unbounded annulus / $S^{n-1} \times \mathbb R$ & \ding{55} & \ref{fig:2}, \ref{fig:6} \\ \cmidrule{1-4}
        $d_r=0$  & entire graph / $\mathbb R^n$ & \ding{55} & \ref{fig:1}, \ref{fig:5} \\ \cmidrule{1-4}
        $d_r<0$, $r$ even & singular annulus / $S^{n-1} \times \mathbb R$ & a copy of $S^{n-1}$ given by cusps in the slice $t=0$ & \ref{fig:1}\\ \cmidrule{1-4}
        $d_r<0$, $r$ odd & singular annulus with self-intersections along a copy of $S^{n-1}$ in $\mathbb H^n \times \{0\}$ & $|A|^2\rightarrow\infty$ on two copies of $S^{n-1}$ in horizontal slices if $r \geq 3$, else smooth & \ref{fig:4} \\ \bottomrule
   \end{tabular}
\label{table:2}
\end{table}

\begin{table}[ht]
  \centering
  \caption{Rotation $H_n$-hypersurfaces in $\mathbb H^n\times\mathbb R$ with $H_n>0$}
    \begin{tabular}{p{25mm}p{30mm}p{40mm}M{10mm}} \toprule
        \textbf{Parameters}   & \textbf{Shape / Topology} & \textbf{Singularities} & \textbf{Figure} \\ \midrule
        $d_n < -1$, $n$ odd & nodoid & $|A|^2\rightarrow\infty$ at infinitely many copies of $S^{n-1}$ in horizontal slices & \ref{fig:7} \\ \cmidrule{1-4}
         & nodoid & $|A|^2 \to \infty$ at infinitely many points on the $t$-axis and copies of $S^{n-1}$ in horizontal slices & \ref{fig:7} \\ \cmidrule(l){2-4}
        \multirow{2}{*}{\parbox{3cm}{$-1 \leq d_n < 0$ \\ $n$ odd}} & generalized horn torus & $|A|^2 \to \infty$ at two copies of $S^{n-1}$ in horizontal slices and at one point on the $t$-axis & \ref{fig:7}\\ \cmidrule(l){2-4}
        								      & portion of generalized spindle torus & $|A|^2 \to \infty$ at two copies of $S^{n-1}$ in horizontal slices and at two points on the $t$-axis & \ref{fig:7} \\ \cmidrule{1-4}
        $d_n < 0$, $n$ even & singular onduloid / $S^{n-1} \times \mathbb R$ & infinitely many copies of $S^{n-1}$ given by cusps in horizontal slices & \ref{fig:3} \\ \cmidrule{1-4}
        $d_n = 0$  & sphere / $S^n$ & \ding{55} & \ref{fig:3}, \ref{fig:7} \\ \cmidrule{1-4}
        $0 < d_n < 1$ & peaked sphere / $S^n$ & $|A|^2\rightarrow\infty$ at two points on the $t$-axis & \ref{fig:3}, \ref{fig:7} \\ \bottomrule
    \end{tabular}
\label{table:3}
\end{table}
\begin{remark}
When $-1\leq d_n<0$ and $n$ is odd, all three cases in Table \ref{table:3} occur depending on the value of the parameter $d_n$.
\end{remark}

\section{Translation \texorpdfstring{$H_r$}{Hr}-hypersurfaces}
\label{translation-surfaces}

In the proof of Theorem \ref{main-thm}, besides rotation hypersurfaces, we will need further $H_r$-hypersurfaces as barriers. The suitable ones are invariant under hyperbolic translation in $\mathbb H^n\times \mathbb R$ with $r$-th mean curvature $H_r>(n-r)/n.$ Hyperbolic translations in $\mathbb H^n\times \mathbb R$ are hyperbolic translations in a slice $\mathbb H^n \times \{t\}$ extended to be constant on the vertical component, and will be described precisely later.
When $0<H_r< (n-r)/n$, smooth complete hypersurfaces invariant under hyperbolic translation are treated in \cite{lima}.
The case $r=1$ has already been studied in \cite{berard-earp}, and an explicit description for $n=2$ has been given by Manzano \cite{manzano}. 
Therefore, we restrict to the case  $r>1.$

As in Section \ref{rotational-surfaces}, given $n$, $r$, and $H_r > 0$, one finds a one-parameter family of functions describing the profile of such translation hypersurfaces.  
Since we do not aim to give a complete classification of translation hypersurfaces, we will choose the parameter to be zero (see \eqref{def:mu} below), and we will only describe a portion of the hypersurface. 
This will be enough for our purposes.

Let us recall the construction of translation hypersurfaces in $\mathbb H^n \times \mathbb R$ by B\'erard--Sa Earp \cite{berard-earp}. 
For simplifying the notation, we denote the zero-section $\mathbb H^n \times \{0\}$  by $\mathbb H^n.$
Take $\gamma$ in $\mathbb H^n$ to be a geodesic passing through $0$. 
We define $V$ to be the vertical plane $\{(\gamma(\rho),t): t,\rho \in \mathbb R\}$. 
We now take $\pi$ to be a totally geodesic hyperplane in $\mathbb H^n$ orthogonal to $\gamma$ at the origin.
We consider hyperbolic translations along  a geodesic $\delta$  passing through $0$ in $\pi,$  repeated slice-wise to get isometries   of $\mathbb H^n \times \mathbb R$.
Now take a curve defined by $c(\rho) \coloneqq (\tanh(\rho/2), \mu(\rho))$ in $V$, where $\mu$ is to be determined.
For any $\rho > 0$, consider the section $\mathbb H^n \times \{\mu(\rho)\}$, and move the point $c(\rho)$ via the above hyperbolic translations.
On each slice, this gives a hypersurface $M_{\rho}$ in $\mathbb H^n \times \{\mu(\rho)\}$ through $c(\rho)$. 
Hence the curve defined by $c$ generates a translation hypersurface $M = \cup_{\rho} M_{\rho}$ in  $\mathbb H^n \times \mathbb R$.

The principal curvatures of the hypersurface $M,$ with respect to the unit normal pointing upwards are
\begin{equation*}
k_1 = \dots = k_{n-1} = \frac{\dot{\mu}}{(1+\dot{\mu}^2)^{\frac12}}\tanh(\rho), \qquad k_n = \frac{\ddot{\mu}}{(1+\dot{\mu}^2)^{\frac32}}.
\end{equation*}
Then
$$nH_r = (n-r)\tanh^r(\rho)\frac{\dot{\mu}^r}{(1+\dot{\mu}^2)^{r/2}}+\tanh^{r-1}(\rho)\frac{r\dot{\mu}^{r-1}\ddot{\mu}}{(1+\dot{\mu}^2)^{\frac{r+2}{2}}}.$$
This is equivalent to the identity
\begin{equation}
\label{start2}
nH_r\frac{\cosh^{n-1}(\rho)}{\sinh^{r-1}(\rho)} = \frac{d}{d \rho} \left(\cosh^{n-r}(\rho)\frac{\dot{\mu}^r}{(1+\dot{\mu}^2)^{\frac{r}{2}}}\right), \qquad r=1,\dots,n.
\end{equation}
Note that now the integrals 
$$\int_0^{\rho} \frac{\cosh^{n-1}(\tau)}{\sinh^{r-1}(\tau)}\,d\tau$$
are not well-defined for $r > 1$, because 
$$\int_0^{\rho} \frac{\cosh^{n-1}(\tau)}{\sinh^{r-1}(\tau)}\,d\tau \geq \int_0^{\rho} \frac{\cosh^{r-1}(\tau)}{\sinh^{r-1}(\tau)}\,d\tau \geq \int_0^{\rho} \cotgh(\tau)\,d\tau = \infty.$$
We then choose $\epsilon > 0$ and define
$$J_{n,r,\epsilon}(\rho) \coloneqq \int_{\epsilon}^{\rho} \frac{\cosh^{n-1}(\tau)}{\sinh^{r-1}(\tau)}\,d\tau, \quad \text{ and } \quad J_{n,1}(\rho) \coloneqq \int_0^{\rho} \cosh^{n-1}(\tau)\,d\tau, \qquad \rho > 0.$$
Then one can integrate \eqref{start2} twice and set the constant of integration to be zero, so as to obtain
\begin{equation}
\label{def:mu}
\mu_{H_r,\epsilon}(\rho) = \int_{\epsilon}^{\rho} \frac{(nH_rJ_{n,r,\epsilon}(\xi))^{\frac{1}{r}}}{\sqrt{\cosh^{\frac{2(n-r)}{r}}(\xi)-(nH_rJ_{n,r,\epsilon}(\xi))^{\frac{2}{r}}}}\,d\xi, \qquad \rho \geq \epsilon.
\end{equation}
Again, $\mu$ depends on $H_r$, and $\epsilon$, so we write $\mu_{H_r,\epsilon}$ to be precise.

\begin{remark}
\label{signs2}
Note that we have defined $\mu_{H_r,\epsilon}$ in \eqref{def:mu} for $\rho \geq \epsilon$. 
This is because we are only interested in the portion of translation hypersurface described by the graph of $\mu_{H_r,\epsilon}$ for $\rho \geq \epsilon$.
The tangent line to the curve described by $\mu_{H_r,\epsilon}$ at $\rho = \epsilon$ is horizontal for all $r$, and $\mu_{H_r,\epsilon}$ is increasing for $\rho > \epsilon$.
The second derivative of $\mu_{H_r,\epsilon}(t)$ is computed as
\begin{equation}
\label{second-derivative-mu}
\ddot{\mu}_{H_r,\epsilon}(\rho) = \frac{\sinh(\rho)\cosh^{\frac{2(n-r)}{r}-1}(\rho)\left(nH_r\frac{\cosh^n(\rho)}{\sinh^r(\rho)}-(n-r)(nH_rJ_{n,r,\epsilon}(\rho))\right)}{r(nH_rJ_{n,r,\epsilon}(\rho))^{\frac{r-1}{r}}\left(\cosh^{\frac{2(n-r)}{r}}(\rho)-(nH_rJ_{n,r,\epsilon}(\rho))^{\frac{2}{r}}\right)^{\frac{3}{2}}}.
\end{equation}
This expression will be used when studying the convexity of  $\mu_{H_r,\epsilon}$  and its regularity  up to second order.
\end{remark}

\begin{remark}
\label{concavity-j}
Let us discuss a few details on $J_{n,r,\epsilon}$ for $r>1$ and $\rho>\epsilon.$
It is clear that $J_{n,r,\epsilon}(\epsilon) = 0$ and $\lim_{\rho \to +\infty} J_{n,r,\epsilon}(\rho) = +\infty.$ Further, $J_{n,r,\epsilon}'(\rho) > 0$ for $\rho\geq \epsilon.$ Hence $J_{n,r,\epsilon}$ is a bijection between $(\epsilon,\infty)$ and $(0,+\infty)$.

For $n>r$, we have the asymptotic behavior $(n-r)J_{n,r,\epsilon}(\rho) \approx \cosh^{n-r}(\rho)$ for $\rho \to+\infty$, and for $n = r$ we have $J_{n,n,\epsilon}(\rho) \approx \rho$ for $\rho \to +\infty$.
\end{remark}

We fix $r > 1$  $H_r > (n-r)/n$, and $\epsilon>0$, and study the function
$$\mu_{H_r,\epsilon}(\rho) \coloneqq \int_{\epsilon}^{\rho} \frac{(nH_rJ_{n,r,\epsilon}(\xi))^{\frac{1}{r}}}{\sqrt{\cosh^{\frac{2(n-r)}{r}}(\xi)-(nH_rJ_{n,r,\epsilon}(\xi))^{\frac{2}{r}}}}\,d\xi.$$

\begin{proposition}
\label{curve-translation-prop}
Let $r > 1,$ $H_r > (n-r)/n$,  and fix $\epsilon>0.$ 
Then $\mu_{H_r,\epsilon}$ is defined on $[\epsilon,\rho_+^{\epsilon}]$, where 
$\rho_+^{\epsilon}$ is the only solution of $\cosh^{n-r}(\rho)-nH_rJ_{n,r,\epsilon}(\rho) = 0$.
We have $\mu_{H_r,\epsilon}(\epsilon) = 0 = \dot{\mu}_{H_r,\epsilon}(\epsilon)$, $\dot{\mu}_{H_r,\epsilon}(\rho) > 0$ for $\rho \in (\epsilon,\rho_+^{\epsilon})$, $\lim_{\rho \to \rho_+^{\epsilon}} \dot{\mu}_{H_r,\epsilon}(\rho) = +\infty$,
and $\mu_{H_r,\epsilon}$ is convex in the interior of its domain. Further, $\lim_{\rho \to \epsilon} \ddot{\mu}_{H_r,\epsilon}(\rho) = +\infty$ (Figure \ref{fig:9}).

\end{proposition}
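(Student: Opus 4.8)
The plan is to mirror the structure of the rotational analysis (Proposition~\ref{r-even-n>r1}), replacing $I_{n,r}$ by $J_{n,r,\epsilon}$ and swapping the roles of $\sinh$ and $\cosh$ in \eqref{def:mu} and \eqref{second-derivative-mu}. First I would identify the domain. By Remark~\ref{concavity-j} the quantity $nH_rJ_{n,r,\epsilon}$ is non-negative for $\rho\ge\epsilon$, so the integrand in \eqref{def:mu} is real exactly where $\cosh^{n-r}(\rho)-nH_rJ_{n,r,\epsilon}(\rho)>0$. Setting $f(\rho):=\cosh^{n-r}(\rho)-nH_rJ_{n,r,\epsilon}(\rho)$, one has $f(\epsilon)=\cosh^{n-r}(\epsilon)>0$ because $J_{n,r,\epsilon}(\epsilon)=0$, and a direct computation gives $f'(\rho)=\tfrac{\cosh^{n-r-1}(\rho)}{\sinh^{r-1}(\rho)}\bigl[(n-r)\sinh^r(\rho)-nH_r\cosh^r(\rho)\bigr]$. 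The bracket equals $\cosh^r(\rho)\bigl[(n-r)\tanh^r(\rho)-nH_r\bigr]$, which is strictly negative since $(n-r)\tanh^r(\rho)<n-r<nH_r$ by the hypothesis $H_r>(n-r)/n$; hence $f$ is strictly decreasing. The asymptotics of $J_{n,r,\epsilon}$ in Remark~\ref{concavity-j} then force $f\to-\infty$ (for $n>r$ because $nH_r/(n-r)>1$, for $n=r$ because $J_{n,n,\epsilon}(\rho)\approx\rho$), so $f$ has a unique zero $\rho_+^{\epsilon}$, which is the claimed right endpoint.

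Next I would read off the boundary behaviour. The equalities $\mu_{H_r,\epsilon}(\epsilon)=0=\dot\mu_{H_r,\epsilon}(\epsilon)$ are immediate from \eqref{def:mu}, the second one because $J_{n,r,\epsilon}(\epsilon)=0$ kills the numerator while the denominator tends to $\cosh^{(n-r)/r}(\epsilon)>0$; positivity of $\dot\mu_{H_r,\epsilon}$ on the interior is clear since $J_{n,r,\epsilon}>0$ there. As $\rho\to\rho_+^{\epsilon}$ the radicand vanishes while the numerator stays bounded away from zero, giving $\dot\mu_{H_r,\epsilon}\to+\infty$. Finiteness of $\mu_{H_r,\epsilon}$ at $\rho_+^{\epsilon}$ follows exactly as in Proposition~\ref{r-even-n>r1}: writing the radicand as a product of $\cosh^{(n-r)/r}(\rho)-(nH_rJ_{n,r,\epsilon}(\rho))^{1/r}$ (which vanishes to first order at $\rho_+^{\epsilon}$, since $f'(\rho_+^{\epsilon})\ne0$) with a bounded positive factor shows the integrand behaves like $(\rho_+^{\epsilon}-\rho)^{-1/2}$, hence is integrable, so the domain is the closed interval $[\epsilon,\rho_+^{\epsilon}]$.

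The main work is the convexity statement, and here the argument differs from the rotational case in a crucial way. By \eqref{second-derivative-mu} the sign of $\ddot\mu_{H_r,\epsilon}$ on the interior equals the sign of $g(\rho):=\cosh^n(\rho)/\sinh^r(\rho)-(n-r)J_{n,r,\epsilon}(\rho)$. A computation gives $g'(\rho)=-r\cosh^{n-1}(\rho)/\sinh^{r+1}(\rho)<0$, so $g$ is strictly \emph{decreasing} — the opposite of the monotonicity enjoyed by the corresponding function in Proposition~\ref{r-even-n>r1}. Consequently I cannot conclude positivity of $g$ from its value at the left endpoint; instead I must evaluate it at $\rho_+^{\epsilon}$. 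Using the defining relation $\cosh^{n-r}(\rho_+^{\epsilon})=nH_rJ_{n,r,\epsilon}(\rho_+^{\epsilon})$ one finds $g(\rho_+^{\epsilon})=\cosh^{n-r}(\rho_+^{\epsilon})\bigl(\cotgh^r(\rho_+^{\epsilon})-(n-r)/nH_r\bigr)$, which is strictly positive because $\cotgh^r(\rho_+^{\epsilon})>1>(n-r)/nH_r$. Since $g$ is decreasing and positive at its rightmost point, it is positive throughout $(\epsilon,\rho_+^{\epsilon})$, and convexity follows. This reversed monotonicity is the step I expect to be the real obstacle: one must exploit the location of $\rho_+^{\epsilon}$ and the hypothesis $H_r>(n-r)/n$ simultaneously, rather than reading off a sign at the base point. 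Finally, $\lim_{\rho\to\epsilon}\ddot\mu_{H_r,\epsilon}=+\infty$ drops out of \eqref{second-derivative-mu}: the factor $(nH_rJ_{n,r,\epsilon}(\rho))^{(r-1)/r}$ in the denominator tends to $0$ as $\rho\to\epsilon$ (here $r>1$ is essential, so the exponent is positive), while the numerator and the remaining denominator factors tend to finite positive limits.
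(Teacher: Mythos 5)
Your proposal is correct, and on the existence and uniqueness of $\rho_+^{\epsilon}$ it coincides with the paper's argument: the same auxiliary function $f_{\epsilon}(\rho)=\cosh^{n-r}(\rho)-nH_rJ_{n,r,\epsilon}(\rho)$, the same computation showing $f_{\epsilon}'<0$ under $H_r>(n-r)/n$, and the same use of the asymptotics of $J_{n,r,\epsilon}$ from Remark \ref{concavity-j} (the paper isolates $n=r$ explicitly because there $f_{\epsilon}'$ tends to $-nH_n$ rather than $-\infty$; your appeal to $J_{n,n,\epsilon}(\rho)\approx\rho$ covers the same point). Where the two diverge is that the paper dispatches everything after the domain analysis with ``the remaining details follow as in the previous section,'' whereas you actually carry out the convexity step --- and you correctly identify that a verbatim transfer from Proposition \ref{r-even-n>r1} would fail: the sign function $g(\rho)=\cosh^n(\rho)/\sinh^r(\rho)-(n-r)J_{n,r,\epsilon}(\rho)$ attached to \eqref{second-derivative-mu} satisfies $g'(\rho)=-r\cosh^{n-1}(\rho)/\sinh^{r+1}(\rho)<0$, the opposite monotonicity of its rotational counterpart, so positivity must be read off at the right endpoint. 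Your evaluation $g(\rho_+^{\epsilon})=\cosh^{n-r}(\rho_+^{\epsilon})\bigl(\cotgh^r(\rho_+^{\epsilon})-(n-r)/nH_r\bigr)>0$ is exactly the needed fix, and the remaining limits ($\dot{\mu}\to+\infty$ at $\rho_+^{\epsilon}$, $\ddot{\mu}\to+\infty$ at $\epsilon$ using $r>1$, integrability at $\rho_+^{\epsilon}$ via first-order vanishing of the radicand) all check out. In short: same skeleton, but your write-up supplies a nontrivial adaptation that the paper leaves implicit, and it is the correct one.
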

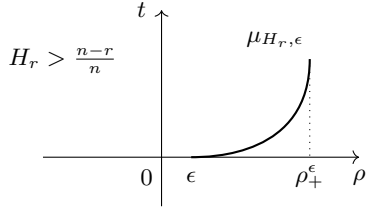
\begin{figure}[htb]
\begin{center}
\begin{tikzpicture}[scale=1.3]
   \draw[->] (-1.2,0) -- (2,0);
  \draw[->] (0,-0.5) -- (0,1.5);
  \draw[thick] (0.3,0) .. controls (0.5,0) and (1.5,0.01) .. (1.5,1);
  \draw[dotted] (1.5,1) -- (1.5,0);
  \node at (-1,1) {$H_r > \frac{n-r}{n}$};
  \node at (0.3,-0.2) {$\epsilon$};
  \node at (-0.15,-0.2) {$0$};
  \node at (1.5,-0.2) {$\rho_+^{\epsilon}$};
  \node at (1.15,1.2) {$\mu_{H_r,\epsilon}$};
  \node at (2,-0.2) {$\rho$};
  \node at (-0.2,1.5) {$t$};
\end{tikzpicture}
\end{center}
\caption{Behavior of $\mu_{H_r,\epsilon}$ for $r > 1$.}
\label{fig:9}
\end{figure}

\begin{proof}Putting together all constraints gives
$$0 \leq nH_rJ_{n,r,\epsilon}(\rho) < \cosh^{n-r}(\rho).$$
Notice that $\rho_+^{\epsilon}$ is finite if and only if 
$$f_{\epsilon}(\rho) \coloneqq \cosh^{n-r}(\rho)-nH_rJ_{n,r,\epsilon}(\rho)$$
admits a zero. One has $f_{\epsilon}(\epsilon) > 0$, whereas the derivative of $f_{\epsilon}$ is
$$f_{\epsilon}'(\rho) = \frac{\cosh^{n-1}(\rho)}{\sinh^{r-1}(\rho)}((n-r)\tanh^r(\rho)-nH_r),$$
and is negative since $H_r > (n-r)/n$.
Moreover, $f_{\epsilon}'$ tends to $-\infty$, hence a zero $\rho_+^{\epsilon}$ exists and is unique.
For $n=r$, $f_{\epsilon}$ reduces to $1-nH_nJ_{n,\epsilon}$, which has a zero $\rho_+^{\epsilon} > \epsilon$ regardless of the value of $H_n > 0$.
The remaining details on the behavior of $\mu_{H_n,\epsilon}$ follow as in previous section.
\end{proof}

\begin{remark}
The technique used for Proposition \ref{prop:c2-reg} can be combined with \eqref{second-derivative-mu} and yields $C^2$-regularity of the translation $H_r$-hypersurface at points corresponding to $\rho = \rho_+^{\epsilon}$.
At points corresponding to $\rho = \epsilon$ when $r > 1$ we only have regularity $C^1$.
\end{remark}

By using the translation defined at the beginning of this section on the curves defined in Proposition \ref{curve-translation-prop}, one gets translation $H_r$-hypersurfaces in $\mathbb H^n \times \mathbb R$, which we describe in the following theorem.
Recall that $\pi$ is the totally geodesic hyperplane in $\mathbb H^n$ orthogonal to the plane containing the support of the curve given by the function $\mu_{H_r,\epsilon}$ at the origin.

\begin{theorem}
\label{str-thm-cyl2}
Let $r > 1$, $H_r > (n-r)/n$, and $\epsilon > 0$. 
Reflect the graph of $\mu_{H_r,\epsilon}$ on $[\epsilon,\rho_+^{\epsilon}]$ with respect to the horizontal slice $\mathbb H^n \times \{\mu_{H_r,\epsilon}(\rho_+^{\epsilon})\}$.
Translating the arc obtained along geodesics through the origin in $\pi$ gives an $H_r$-hypersurface with the topology of $\mathbb R^{n-1} \times [0,1]$ and of class $C^2$ away from the boundary.
The boundary components are planar equidistant hypersurfaces with distance $\epsilon$ from $\pi$, they lie in two different slices, and can be obtained from one another by a vertical translation.
\end{theorem}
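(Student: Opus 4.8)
The plan is to first understand the profile arc produced by the reflection, then transport its properties to the full hypersurface through the translation group, and finally read off the topology, regularity, and boundary description. From Proposition \ref{curve-translation-prop} the graph of $\mu_{H_r,\epsilon}$ over $[\epsilon,\rho_+^{\epsilon}]$ is an increasing convex curve joining $(\epsilon,0)$, where the tangent is horizontal because $\dot{\mu}_{H_r,\epsilon}(\epsilon)=0$, to $(\rho_+^{\epsilon},\mu_{H_r,\epsilon}(\rho_+^{\epsilon}))$, where the tangent is vertical because $\dot{\mu}_{H_r,\epsilon}\to+\infty$. Reflecting across the slice $t=\mu_{H_r,\epsilon}(\rho_+^{\epsilon})$ and concatenating yields a single arc $\alpha$ running from $(\epsilon,0)$ to $(\epsilon,2\mu_{H_r,\epsilon}(\rho_+^{\epsilon}))$, symmetric about that slice, with horizontal tangents at both endpoints. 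Since the tangent at $\rho_+^{\epsilon}$ is vertical, the reflected branch continues the original one across $\rho=\rho_+^{\epsilon}$; invoking the regularity statement in the remark after Proposition \ref{curve-translation-prop} (the argument of Proposition \ref{prop:c2-reg} applied through \eqref{second-derivative-mu}), I would confirm that $\alpha$ is of class $C^2$ there. The two endpoints lie at the distinct heights $0$ and $2\mu_{H_r,\epsilon}(\rho_+^{\epsilon})$, so $\alpha$ is an embedded arc.

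Next I would apply the hyperbolic translations along geodesics through the origin in $\pi$. For each value of the distance parameter $\rho$, the point on $\alpha$ at distance $\rho$ from $\pi$ sweeps out the equidistant hypersurface at distance $\rho$ inside the slice at the corresponding height; these are exactly the $M_\rho$ of the construction, each diffeomorphic to $\mathbb{R}^{n-1}$ and of constant geodesic curvature $\tanh(\rho)$, which accounts for the factor $\tanh(\rho)$ in $k_1=\dots=k_{n-1}$. Because the whole construction is equivariant under these translations, $C^2$-regularity along $\alpha$ away from $\rho=\epsilon$ upgrades to $C^2$-regularity of the hypersurface $M$ away from its boundary. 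The identity $H_r=\mathrm{const}$ is built into \eqref{def:mu} via \eqref{start2}, and it is preserved by the reflection and by the translations, both of which are ambient isometries (upon fixing the normal consistently, as explained in Section \ref{rotational-surfaces}, the signs of all $k_i$ are unchanged). Hence $M$ is an $H_r$-hypersurface of class $C^2$ away from its boundary.

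For the topology I would show that the parametrization by (point of $\alpha$, translation parameter) is a homeomorphism onto $M$. Distinct points of $\alpha$ lie either in distinct slices or at distinct distances from $\pi$: the ascending branch has height $\mu_{H_r,\epsilon}(\rho)<\mu_{H_r,\epsilon}(\rho_+^{\epsilon})$, while the reflected branch has height $2\mu_{H_r,\epsilon}(\rho_+^{\epsilon})-\mu_{H_r,\epsilon}(\rho)>\mu_{H_r,\epsilon}(\rho_+^{\epsilon})$, so the equidistant hypersurfaces $M_\rho$ are pairwise disjoint and foliate $M$. As $\alpha\cong[0,1]$ and each $M_\rho\cong\mathbb{R}^{n-1}$, we obtain $M\cong\mathbb{R}^{n-1}\times[0,1]$. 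The boundary corresponds to $\rho=\epsilon$, namely the two equidistant hypersurfaces at distance $\epsilon$ from $\pi$ sitting in the slices $\mathbb{H}^n\times\{0\}$ and $\mathbb{H}^n\times\{2\mu_{H_r,\epsilon}(\rho_+^{\epsilon})\}$, and the second is the image of the first under the vertical translation $t\mapsto t+2\mu_{H_r,\epsilon}(\rho_+^{\epsilon})$, which yields the final assertions.

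The main obstacle is the $C^2$-matching at the reflection slice $\rho=\rho_+^{\epsilon}$, where the profile has a vertical tangent: one cannot treat $\mu_{H_r,\epsilon}$ as a graph over $\rho$ there and must instead argue at the level of the second fundamental form, as in Proposition \ref{prop:c2-reg}, checking through \eqref{second-derivative-mu} and \eqref{start2} that the principal curvatures of the original and reflected branches admit finite, matching limits. Once the profile arc is understood in this way, the embeddedness and the product structure are routine bookkeeping.
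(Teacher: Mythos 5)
Your proposal is correct and follows essentially the same route the paper takes: the paper leaves Theorem \ref{str-thm-cyl2} without a formal proof, treating it as a direct consequence of the translation construction at the start of Section \ref{translation-surfaces}, Proposition \ref{curve-translation-prop}, and the remark on $C^2$-regularity at $\rho=\rho_+^{\epsilon}$ via \eqref{second-derivative-mu}. You simply make explicit the bookkeeping (injectivity of the foliation by equidistant hypersurfaces $M_\rho$, the product structure, and the matching of the second fundamental form across the reflection slice) that the paper leaves implicit.
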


\section{Estimates}
\label{estimates}

In this section we collect estimates that will be needed in the proof of Theorem \ref{main-thm}.
We define radii and heights related to pieces of the hypersurfaces classified in the previous sections, and study the interplay between them.
First we need to compare \emph{spheres} and \emph{horizontal cylinders}.

Fix $n \geq r > 1$ and $H_r > (n-r)/n.$ 
Denote by  $\mathcal S_r$ the sphere in $\mathbb H^n \times \mathbb R$ with $r$-th mean curvature $H_r$, namely the compact rotation hypersurface generated by the graph of $\lambda_{H_r,0}$ in Theorems \ref{str-thm2}, \ref{str-thm4}, \ref{str-thm6}, \ref{str-thm8}.  Let $R_{\mathcal S_r} \coloneqq \rho_+$, where $\rho_+$ was defined as the length of the domain of $\lambda_{H_r,0}$.

For any $\epsilon>0$, let us denote by $\mathcal C_{r,\epsilon}$ the $H_r$-hypersurface described in Theorem \ref{str-thm-cyl2}, which is a portion of a horizontal cylinder. 
Set $R_{\mathcal C_{r,\epsilon}} \coloneqq \rho_+^{\epsilon}-\epsilon$ where $\rho_+^{\epsilon}$ is the 
unique value such that 
$$f_{\epsilon}(\rho_+) = \cosh^{n-r}(\rho_+)-nH_rJ_{n,r,\epsilon}(\rho_+) = 0.$$ 
Note that $\mathcal C_{r,\epsilon}$ has a horizontal hyperplane of symmetry $P$ and  $R_{\mathcal C_{r,\epsilon}}$ is the distance between the projection of the boundary of $\mathcal C_{r,\epsilon}$ on $P$ and $\mathcal C_{r,\epsilon}\cap P.$ 

The next estimate will be used in Claim \ref{claim2} for the proof of Theorem \ref{main-thm}.
\begin{lemma} 
\label{radii-comparison}
For all $n$, $r$, $H_r$ with $n \geq r > 1$ and $H_r > (n-r)/n$, there exists a positive $\epsilon=\epsilon(n,r,H_r)$ such that $R_{\mathcal C_{r,\epsilon}} <R_{\mathcal S_r}$. 
\end{lemma}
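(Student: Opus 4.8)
The plan is to fix $n \geq r > 1$ and $H_r > (n-r)/n$ and to prove the sharper statement that $R_{\mathcal C_{r,\epsilon}} \to 0$ as $\epsilon \to 0^+$; since $R_{\mathcal S_r} = \rho_+$ is a fixed positive number (independent of $\epsilon$), any sufficiently small $\epsilon$ then yields $R_{\mathcal C_{r,\epsilon}} < R_{\mathcal S_r}$. Recall from Proposition \ref{curve-translation-prop} that $R_{\mathcal C_{r,\epsilon}} = \rho_+^{\epsilon}-\epsilon$, where $\rho_+^{\epsilon}$ is the unique zero of the strictly decreasing function $f_{\epsilon}(\rho) = \cosh^{n-r}(\rho) - nH_r J_{n,r,\epsilon}(\rho)$, with $f_{\epsilon}(\epsilon) > 0$. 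Because $f_{\epsilon}$ is decreasing, for any $s_0 > 0$ the inequality $R_{\mathcal C_{r,\epsilon}} < s_0$ is equivalent to $f_{\epsilon}(\epsilon + s_0) < 0$, which is the condition I would analyze.

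First I would fix an arbitrary $s_0 > 0$ and study the behavior of $f_{\epsilon}(\epsilon + s_0)$ as $\epsilon \to 0^+$. The term $\cosh^{n-r}(\epsilon + s_0)$ converges to the finite constant $\cosh^{n-r}(s_0)$ (and equals $1$ identically when $n=r$), so everything hinges on the growth of $J_{n,r,\epsilon}(\epsilon + s_0) = \int_{\epsilon}^{\epsilon + s_0} \cosh^{n-1}(\tau)/\sinh^{r-1}(\tau)\,d\tau$. For $\epsilon < s_0$ I would split this as $\int_{\epsilon}^{s_0} + \int_{s_0}^{\epsilon + s_0}$: the second integral stays bounded and tends to $0$, since its integrand is continuous and bounded away from the singularity, while the first is governed by the behavior near the origin, where $\cosh^{n-1}(\tau)/\sinh^{r-1}(\tau) \sim \tau^{-(r-1)}$.

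The key step, and the crux of the whole argument, is that precisely because $r > 1$ this integral diverges as $\epsilon \to 0$: one has $\int_{\epsilon}^{s_0}\tau^{-(r-1)}\,d\tau \sim \epsilon^{-(r-2)}/(r-2)$ for $r > 2$ and $\sim -\ln \epsilon$ for $r = 2$, so that $J_{n,r,\epsilon}(\epsilon + s_0) \to +\infty$. This is exactly the divergence that forced the cutoff $\epsilon$ in the definition of $J_{n,r,\epsilon}$ in the first place. Consequently $f_{\epsilon}(\epsilon + s_0) = \cosh^{n-r}(\epsilon + s_0) - nH_r J_{n,r,\epsilon}(\epsilon + s_0) \to -\infty$, and in particular $f_{\epsilon}(\epsilon + s_0) < 0$ for all $\epsilon$ small enough. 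Hence $\rho_+^{\epsilon} < \epsilon + s_0$, that is $R_{\mathcal C_{r,\epsilon}} < s_0$; since $s_0 > 0$ was arbitrary, this gives $R_{\mathcal C_{r,\epsilon}} \to 0$ as $\epsilon \to 0^+$.

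To conclude I would take $s_0 = R_{\mathcal S_r}$, which is a fixed positive number because $\rho_+ > 0$ whenever $H_r > (n-r)/n$ (cf.\ Propositions \ref{r-even-n>r1} and \ref{r-odd-n>r2} for $n > r$, and their $n = r$ analogues), and then pick any $\epsilon$ small enough that $R_{\mathcal C_{r,\epsilon}} < R_{\mathcal S_r}$. I do not expect a genuine obstacle beyond making the singular estimate for $J_{n,r,\epsilon}$ precise and recording the elementary difference between the cases $r = 2$ and $r > 2$; the case $n = r$ requires no separate treatment, since there $\cosh^{n-r}\equiv 1$ remains finite while the divergence of $J_{n,r,\epsilon}$ persists unchanged.
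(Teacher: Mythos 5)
Your argument is correct and follows essentially the same route as the paper: both proofs reduce the claim to showing that $f_{\epsilon}$ becomes negative at a small test point, and both obtain this from the divergence $J_{n,r,\epsilon}\to+\infty$ caused by the singularity $\cosh^{n-1}(\tau)/\sinh^{r-1}(\tau)\geq \tau^{-(r-1)}$ at the origin when $r>1$, with the same dichotomy between $r=2$ and $r>2$. The only cosmetic difference is that the paper evaluates $f_{\epsilon}$ at $\sqrt{\epsilon}$ (yielding the slightly sharper conclusion $\epsilon<\rho_+^{\epsilon}<\sqrt{\epsilon}$), whereas you evaluate at $\epsilon+s_0$ for a fixed $s_0>0$; both give $\lim_{\epsilon\to 0}\rho_+^{\epsilon}=0$ and hence the lemma.
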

\begin{remark}
A version of this statement for $r=1$ is given in Nelli--Pipoli \cite[Lemma~3.3]{nelli-pipoli}. Lemma \ref{radii-comparison} may be viewed as an extension of the latter to $r > 1$. 
\end{remark}
\begin{proof}
We have already shown that for $H_r > (n-r)/n$ (or $H_n > 0$) the function $f_{\epsilon}$ is decreasing.
Since $\rho_+^{\epsilon}>\epsilon$, we have $\lim_{\epsilon\rightarrow\infty}\rho_+^{\epsilon}=\infty$. 
 
Note that the function $\epsilon \mapsto \rho_+^{\epsilon}$ is continuous and increasing. 
To see this, let $0<a<b$ and $\rho > 0$, so that
$$J_{n,r,a}(\rho)=\int_a^b\frac{\cosh^{n-1}(x)}{\sinh^{r-1}(x)}\,dx+\int_{b}^{\rho}\frac{\cosh^{n-1}(x)}{\sinh^{r-1}(x)}\,dx>\int_{b}^{\rho}\frac{\cosh^{n-1}(x)}{\sinh^{r-1}(x)}\,dx=J_{n,r,b}(\rho).$$ 
It follows that
$$f_a(\rho_+^b)<\cosh^{n-r}(\rho_+^b)-nH_rJ_{n,r,b}(\rho_+^b)=f_b(\rho_+^b)=0=f_a(\rho_+^a).$$
Since $f_a$ is decreasing, $\rho_+^b>\rho_+^a$ holds.
 
We claim that $\rho_+^{\epsilon}<\sqrt\epsilon$ if $\epsilon$ is small enough. 
By definition of $\rho_+^{\epsilon}$ and the fact that $f_{\epsilon}$ is decreasing, it is enough to prove that $f_{\epsilon}(\sqrt\epsilon)<0$ for $\epsilon$ small enough.
Since the function $x\cosh(x)-\sinh(x)$ is positive for $x > 0$, we deduce that 
$$\frac{\cosh^{n-1}(x)}{\sinh^{r-1}(x)} = \cosh^{n-r}(x)\frac{\cosh^{r-1}(x)}{\sinh^{r-1}(x)} > \frac{1}{x^{r-1}},$$
whence
$$
J_{n,r,\epsilon}(\sqrt\epsilon)\geq\left\{\begin{array}{rcl}
-\frac12\log(\epsilon)&\text{ if }&r=2,\\
\frac{1}{r-2}(\epsilon^{{2-r}}-\epsilon^{\frac{2-r}{2}})&\text{ if }&r>2.
\end{array}\right.
$$ 
In both cases $\lim_{\epsilon\rightarrow 0}J_{n,r,\epsilon}(\sqrt\epsilon)=+\infty$. It follows that $f_{\epsilon}(\sqrt\epsilon)<0$ for $\epsilon$ sufficiently small, hence the claim is proved.
We deduce that $\epsilon<\rho_+^{\epsilon}<\sqrt\epsilon$ for $\epsilon$ small, so $\lim_{\epsilon\rightarrow 0}\rho_+^{\epsilon}=0$. 

Given $n \geq r > 1$ and $H_r > (n-r)/n$, the value of $R_{\mathcal S_r}$ is fixed. By the above statements, there is a value of $\epsilon > 0$ such that $R_{\mathcal C_{r,\epsilon}} < \rho_+^{\epsilon} < R_{\mathcal S_r}$.
\end{proof}

The next type of hypersurfaces we consider are \emph{annuli}.
Let  $n>r$, $H_r = (n-r)/n$, and choose $d_r > 0$. 
For these values of the parameters, the functions $\lambda_{H_r,d_r}$ for $r$ even and odd share the same behavior. 
Specifically, they have a zero $\rho_-$, which is the only solution of $\sinh^{n-r}(\rho) -(nH_rI_{n,r}(\rho)+d_r)=0$, and start with vertical tangent. 
After a vertical reflection across the slice $\mathbb H^n \times \{0\}$ and rotation about a vertical axis, each curve produces an unbounded annulus (see Theorem \ref{str-thm3}, \ref{str-thm7}).

Let us highlight a property of $d_r$ that will simplify our calculations.  
Since $nI_{n,r}(\rho) \approx \rho^n$ for $\rho$ close to $0$, for $\rho_-$ small we estimate
$$d_r = \sinh^{n-r}(\rho_-)-nH_rI_{n,r}(\rho_-) \approx \rho_-^{n-r}-H_r\rho_-^{n} = \rho_-^{n-r}(1-H_r\rho_-^r).$$
This implies 
\begin{equation}
\label{dr}
\lim_{d_r \to 0} \frac{d_r}{\rho_-^{n-r}} = 1.
\end{equation}
We need to consider the portion of the previous annulus between the slices $\mathbb H^n \times \{0\}$ and $\mathbb H^n \times \{h^*\},$ where $h^*$ is defined as 
\begin{equation}
\label{h-star}
h^* \coloneqq\int_{\rho_-}^{2\rho_-} \frac{((n-r)I_{n,r}(\xi)+d_r)^{\frac1r}}{\sinh^{\frac{n-r}{r}}(\xi)}\,d\xi.
\end{equation}
Observe that by \eqref{dr} we can interpret $h^*$ as an approximation of the value $\lambda_{{(n-r)}/{r},d_r}(2\rho_-)$ for $\rho_-$ small. Moreover $h^*< \lambda_{{(n-r)}/{r},d_r}(2\rho_-).$

Let now  $n=r.$  For $d_n > 0$ small enough, we consider portions of the peaked spheres found in Theorems \ref{str-thm4} and \ref{str-thm8}, so that the cases $n$ even and odd can be treated together. 
Here $\rho_-$ is not defined a priori, so we choose $\rho_- = d_n^{2/n}$ and define $h^*$ as follows
(by abuse of notation, we use the notation  $h^*$ as above)
\begin{equation}
\label{new-h}
h^* \coloneqq \int_{\rho_-}^{2\rho_-} d_n^{\frac1n} = d_n^{\frac3n}.
\end{equation}
Note that when $\rho_-$ is small,  then  $2\rho_-<\rho_+$  and $h^*$ is an approximation of 
$\lambda_{H_n,d_n}(2\rho_-)-\lambda_{H_n,d_n}(\rho_-)$, which is the height of the portion of the peaked sphere between two slices 
intersecting it in codimension one spheres of radii ${\rho_-}$ and ${2\rho_-}$. Moreover $h^*<\lambda_{H_n,d_n}(2\rho_-)-\lambda_{H_n,d_n}(\rho_-).$
 
For any $n \geq r$ we define $\rho_{H_r}^*$ implicitly as
\begin{equation}
\label{rho*}
h^* \eqqcolon \int_0^{\rho_{H_r}^*} \frac{(nH_rI_{n,r}(\xi))^{\frac1r}}{\sqrt{\sinh^{\frac{2(n-r)}{r}}(\xi)-(nH_rI_{n,r}(\xi))^{\frac2r}}}\,d\xi,
\end{equation}

Notice that $\rho_{H_r}^*$ is the radius of the intersection of the  sphere $\mathcal S_r$ of constant curvature $H_r$  with a slice at vertical distance $h^*$ from the South Pole. 

As above, we assume $\rho_-$ is small, which is equivalent to requiring $d_r$ small (recall that $d_r \to 0$ if and only if $\rho_- \to 0$).

\begin{lemma}
\label{various-estimates}
Let $n > r$, $d_r > 0$, $H_r = (n-r)/n$, take $h^*$ as in \eqref{h-star}, and $\rho_{H_r}^*$ as in \eqref{rho*}. Then
$$\lim_{d_r \to 0} \rho_{H_r}^* = 0,  \qquad \lim_{d_r \to 0} \frac{\rho_-}{\rho_{H_r}^*} = 0.$$
For $n=r$, $d_n > 0$, and $H_n > 0$, take $h^*$ as in \eqref{new-h} and $\rho_{H_n}^*$ as in \eqref{rho*}. Then 
$$\lim_{d_n \to 0} \rho_{H_n}^* = 0, \qquad \lim_{d_n \to 0} \frac{\rho_-}{\rho_{H_n}^*} = 0.$$
\end{lemma}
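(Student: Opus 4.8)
The plan is to extract the leading-order behaviour of $h^*$ and of the integral defining $\rho_{H_r}^*$ as $d_r \to 0$ (equivalently $\rho_- \to 0$), and then combine the two asymptotics. Throughout I would rely on the near-origin estimates collected in Remark~\ref{integ}, namely $nI_{n,r}(\rho) \approx \rho^n$ and $\sinh(\rho) \approx \rho$ as $\rho \to 0$, together with the relation $d_r \approx \rho_-^{n-r}$ from \eqref{dr} in the case $n > r$.

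First I would analyse $h^*$ when $n > r$. On the integration range $[\rho_-, 2\rho_-]$ one has $\xi \approx \rho_-$, so $(n-r)I_{n,r}(\xi) \approx \frac{n-r}{n}\xi^n = O(\rho_-^n)$, which is negligible compared to $d_r \approx \rho_-^{n-r}$ because $n > n-r$. Hence the numerator of the integrand in \eqref{h-star} is $\approx d_r^{1/r} \approx \rho_-^{(n-r)/r}$, while the denominator is $\approx \xi^{(n-r)/r}$. Integrating $\xi^{-(n-r)/r}$ over $[\rho_-, 2\rho_-]$ produces a factor of order $\rho_-^{\,1-(n-r)/r}$ (a logarithm when $n = 2r$, which is harmless), and multiplying by $d_r^{1/r} \approx \rho_-^{(n-r)/r}$ yields $h^* \approx c\,\rho_-$ for a positive constant $c = c(n,r)$; in particular $h^* \to 0$. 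When $n = r$ this step is immediate: by \eqref{new-h} and the choices made one has $h^* = d_n^{3/n}$ and $\rho_- = d_n^{2/n}$, both tending to $0$.

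Next I would read off the behaviour of $\rho_{H_r}^*$ from its defining equation \eqref{rho*}. Near $\xi = 0$ the term $(nH_rI_{n,r}(\xi))^{2/r} \approx \xi^{2n/r}$ is of strictly higher order than $\sinh^{2(n-r)/r}(\xi) \approx \xi^{2(n-r)/r}$, so the denominator is $\approx \xi^{(n-r)/r}$ and the whole integrand is $\approx c'\xi$ for a positive constant $c'$; this is precisely what makes the integral converge. Consequently $h^* \approx \frac{c'}{2}(\rho_{H_r}^*)^2$, so $\rho_{H_r}^* \approx \sqrt{2h^*/c'}$. Combining with $h^* \approx c\,\rho_-$ gives $\rho_{H_r}^* \approx c''\sqrt{\rho_-}$, whence $\rho_{H_r}^* \to 0$ and $\rho_-/\rho_{H_r}^* \approx \sqrt{\rho_-}/c'' \to 0$. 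For $n = r$ the same integrand computation (with $\sinh^0 \equiv 1$) gives integrand $\approx H_n^{1/n}\xi$, hence $d_n^{3/n} = h^* \approx \frac{H_n^{1/n}}{2}(\rho_{H_n}^*)^2$ and $\rho_{H_n}^* \approx \tilde c\, d_n^{3/(2n)}$; then $\rho_-/\rho_{H_n}^* \approx \tilde c^{-1} d_n^{2/n - 3/(2n)} = \tilde c^{-1} d_n^{1/(2n)} \to 0$.

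The routine parts are the two elementary power-law integrals. The only point requiring a little care is making the leading-order replacements uniform on the relevant shrinking intervals, so that they may be carried under the integral sign; since every quantity involved is, near the origin, asymptotic to a clean monomial in $\xi$ (and in $\rho_-$ or $d_n$), this can be achieved by sandwiching the integrand between constant multiples of these monomials, and that bookkeeping is where I expect the bulk of the work to lie.
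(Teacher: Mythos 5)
Your proposal is correct and follows essentially the same route as the paper's proof: approximate the integrand of \eqref{h-star} by $d_r^{1/r}\xi^{(r-n)/r}$ to get $h^*\approx c\,\rho_-$ (treating $n=2r$ separately), approximate the integrand of \eqref{rho*} by $H_r^{1/r}\xi$ to get $h^*\approx \tfrac{H_r^{1/r}}{2}(\rho_{H_r}^*)^2$, and combine; the $n=r$ case is handled with the same power-counting. The only cosmetic quibble is that for $n=2r$ the integral $\int_{\rho_-}^{2\rho_-}\xi^{-1}\,d\xi$ equals the constant $\ln 2$ rather than producing a logarithm in $\rho_-$, but this does not affect the conclusion.
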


\begin{proof}

First assume $r<n.$
For $d_r$ small the right-hand side of \eqref{rho*} is approximated as
$$\int_0^{\rho_{H_r}^*} H_r^{\frac1r}\xi\, d\xi = \frac{H_r^{\frac1r}}{2}(\rho_{H_r}^*)^2.$$
We then approximate $h^*$  in \eqref{h-star} as
\begin{equation}
\label{approx-h}
h^* \approx \int_{\rho_-}^{2\rho_-} \left(\frac{n-r}{n}\xi^r+\frac{d_r}{\xi^{n-r}}\right)^{\frac{1}{r}}\,d\xi \approx \int_{\rho_-}^{2\rho_-} d_r^{\frac{1}{r}}\xi^{\frac{r-n}{r}}\,d\xi.
\end{equation}
Assume now $n \neq 2r$. We integrate \eqref{approx-h} to find
\begin{equation*}
h^* \approx \frac{r}{2r-n}(2^{\frac{2r-n}{r}}-1)\rho_-
\end{equation*}
On the other hand, $h^* \approx \frac{H_r^{\frac1r}}{2}(\rho_{H_r}^*)^2$. Since $d_r \to 0$ is equivalent to $\rho_- \to 0$, it is clear that $\lim_{d_r \to 0} \rho_{H_r}^* = 0$ and
$$\lim_{d_r \to 0} \frac{\rho_-}{\rho_{H_r}^*} = \lim_{d_r \to 0} \sqrt{\rho_-} = 0.$$
If $n = 2r$ we need to integrate \eqref{approx-h} in a different manner, namely
\begin{align*}
\int_{\rho_-}^{2\rho_-} d_r^{\frac1r}\xi^{-1}\,d\xi & = d_r^{\frac1r}\ln 2 \approx \rho_-\ln 2.
\end{align*}
Then again, $\lim_{d_r \to 0} \rho_-/\rho_{H_r}^* = 0$. 

When $n=r$ the proof is analogous provided that $h^* \coloneqq d_n^{\frac3n}$, as in \eqref{new-h}. 
\end{proof}

\section{Hyperbolic lima\c{c}on}
\label{limacon}

The goal of this section is to improve the estimates on the size of the hyperbolic lima\c con introduced in \cite{nelli-pipoli}. This hypersurface of $\mathbb H^n$ generalizes the well-known \emph{lima\c{c}on de Pascal} in the Euclidean plane, and it will play an important role in the proof of Theorem~\ref{main-thm}. We start by recalling its definition.

\begin{definition}
\label{limacon-def}
Let $A$ and $C$ be two distinct points in $\mathbb H^n$, and $c > 0$ be a constant. 
Let $\mathcal C$ be the geodesic sphere with radius $c$ centered at $C$.
For any $P \in \mathcal C$ define $A_P$ to be the reflection of $A$ across the totally geodesic hyperplane in $\mathbb H^n$ tangent to $\mathcal C$ at $P$.
The set 
$$\mathcal L \coloneqq \{A_P \in \mathbb H^n: P \in \mathcal C\}$$
is called \emph{hyperbolic lima\c{c}on}, and $A$ is called  \emph{base point} of $\mathcal L.$
\end{definition}
Since the hyperbolic space is two-points homogeneous, up to isometries of the ambient space $\mathcal L$ depends only on two parameters: $a \coloneqq d(A,C)$, where $d$ is the hyperbolic distance, and $c > 0$ as in Definition~\ref{limacon-def}. The shape of $\mathcal L$ changes depending on whether $a=c$, $a<c$, or $a>c$. Here we are only interested in the latter case. We refer to Nelli--Pipoli \cite[Section 2]{nelli-pipoli} for general properties of $\mathcal L$.

The following result improves \cite[Lemma 2.5]{nelli-pipoli} and will allow to remove the pinching assumption in 
\cite[Theorem 4.1]{nelli-pipoli}.

\begin{lemma}
\label{limacon-lemma}
Take $\mathcal{L}$ to be the hyperbolic lima\c{c}on with $a > c$ and base point $A$. Let $\mathcal{C}$ be the geodesic sphere defining $\mathcal L$, $C$ be its center, and $X$ be the point of $\mathcal C$ with minimal distance from~$A$.
Then $\mathcal L$ has two loops, one inside the other, and it has a self-intersection only at~$A$. Moreover the following statements hold. 
\begin{enumerate}
\item The smaller (resp.\ larger) loop of $\mathcal L$ is contained in (resp.\ contains) the disk centered at $X$ and radius $a-c$.
\item The smaller loop of $\mathcal L$ bounds the disk centered at $X$ and radius 
\begin{equation}\label{improvement-distance}
\ell(a,c) \coloneqq \cosh^{-1}\left(\cosh(a-c)-\frac{\sinh c}{2\sinh a} \sinh^2(a-c) \right),
\end{equation}
\item All of $\mathcal L$ sits inside the disk centered at $C$ and radius $a+2c$.
\end{enumerate} 
\end{lemma}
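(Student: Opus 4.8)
The plan is to establish statement (3) purely via the triangle inequality in $\mathbb H^n$, exploiting that each $A_P$ is produced by an \emph{isometric} reflection. First I would record the crucial observation underlying the whole construction: since $A_P$ is the reflection of $A$ across the totally geodesic hyperplane tangent to $\mathcal C$ at $P$, and that hyperplane passes through the point of tangency $P$, the reflection is an isometry of $\mathbb H^n$ fixing $P$. Hence $d(A_P,P) = d(A,P)$ for every $P \in \mathcal C$. This is the only genuinely geometric input; everything afterward is elementary.

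Next I would bound $d(A,P)$ uniformly in $P$. Because $P$ lies on the geodesic sphere $\mathcal C$ of radius $c$ centered at $C$, we have $d(P,C) = c$, and the triangle inequality gives
$$d(A,P) \leq d(A,C) + d(C,P) = a + c.$$
Combining this with the isometry identity and one further application of the triangle inequality yields, for every $P \in \mathcal C$,
$$d(A_P,C) \leq d(A_P,P) + d(P,C) = d(A,P) + c \leq (a+c) + c = a + 2c.$$

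Since this estimate is independent of $P$, the entire limaçon $\mathcal L = \{A_P : P \in \mathcal C\}$ lies in the closed geodesic disk centered at $C$ of radius $a+2c$, which is exactly statement (3). I do not expect a real obstacle here: unlike parts (1) and (2), which require locating loops and producing the sharper explicit radius $\ell(a,c)$ via the hyperbolic law of cosines, the present bound is the coarsest of the three and follows from two triangle inequalities together with the reflection-isometry identity $d(A_P,P)=d(A,P)$. The only point meriting care is verifying that $P$ is indeed fixed by the reflecting hyperplane, so that the distance from $A_P$ to $P$ is preserved; granting that, uniformity in $P$ is automatic because neither $d(P,C)=c$ nor the bound $d(A,P)\le a+c$ depends on the choice of $P \in \mathcal C$.
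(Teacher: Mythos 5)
There is a genuine gap: your proposal proves only statement (3) of the lemma and leaves everything else untouched. The lemma also asserts the two-loop structure of $\mathcal L$ with a single self-intersection at $A$, the containment statement (1), and --- most importantly --- statement (2), the explicit radius $\ell(a,c)$ of \eqref{improvement-distance}. Statement (2) is the entire point of this lemma: it is the announced improvement over \cite[Lemma 2.5]{nelli-pipoli} that later allows the authors to drop the pinching hypothesis in the Ros--Rosenberg theorem, and it cannot be obtained from triangle inequalities. The paper's proof reduces to the two-dimensional case by rotational symmetry, parametrizes $\mathcal L$ in the hyperboloid model of $\mathbb H^2$ as $L(\theta) = A - 2q(A,\nu(\theta))\nu(\theta)$ with $\nu(\theta)$ the unit normal to the reflecting plane, computes $q(X,L(\theta))$ explicitly, finds the critical points of $\theta \mapsto d(X,L(\theta))$ (they occur at $\sin\theta = 0$ and at $\cos\theta = \sinh(a+c)/(2\sinh a\cosh c)$), and evaluates the distance at the nontrivial critical angle to obtain $\ell(a,c)$. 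None of this is present in your proposal, and the loop structure and part (1) are likewise missing (the paper cites \cite[Lemmas 2.4 and 2.5]{nelli-pipoli} for these).

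That said, your argument for the part you do treat is correct: the reflecting hyperplane passes through $P$, so $d(A_P,P) = d(A,P) \leq a+c$, and hence $d(A_P,C) \leq d(A,P) + c \leq a+2c$. This is a clean, self-contained derivation of (3), whereas the paper simply cites earlier work for it (and even remarks that the critical point $\theta=\pi$ of its own computation yields only the weaker radius $a+3c$ about $X$). But proving the coarsest of the three estimates does not constitute a proof of the lemma; you would need to supply the parametrization and the critical-point computation, or an equivalent argument via the hyperbolic law of cosines, to establish (2).
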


\begin{proof}
Since $a>c$, $\mathcal L$ has two loops, one inside the other, and has a self-intersection only at $A$, cf.\ \cite[Lemma 2.4]{nelli-pipoli}. The estimates (1) and (3) have been proved in \cite[Lemma 2.5]{nelli-pipoli}. It remains to prove (2).

Since $\mathcal L$ is invariant with respect to rotations about the geodesic passing through $A$ and $C$, we can assume $n=2$. We start by giving an explicit parametrization of $\mathcal L$ in the hyperboloid model for the hyperbolic space canonically embedded in the Minkowski space $\mathbb R^{2,1} = (\mathbb R^3,q)$, where $q$ is the standard scalar product of signature $(2,1)$.
Without loss of generality, we can assume that $A = (\sinh a, 0, \cosh a)$, and the center of $\mathcal C$ to be $(0,0,1)$.
Then we parametrize $\mathcal C$ by
$$\alpha(\theta) = (\sinh c \cos \theta, \sinh c\sin \theta, \cosh c).$$
Let $P = \alpha(\theta)$ for some $\theta$. We want to find the unique geodesic $\gamma_P$ through $P$ tangent to $\mathcal{C}$ explicitly: $\gamma_P$ is the geodesic passing through $P$ and generated by the unit tangent vector to $\mathcal{C}$ at $P$, which is
$$T(\theta) = \frac{\alpha'(\theta)}{\sqrt{q(\alpha'(\theta),\alpha'(\theta))}} = (-\sin \theta,\cos \theta,0).$$
Therefore $\gamma_P = \mathbb H^2 \cap \Pi_P$, where $\Pi_P$ is the plane in $\mathbb R^{2,1}$ passing through $O$, $P$, and parallel to $T$.
A unit normal to $\Pi_P$ with respect to $q$ is the vector 
$$\nu(\theta) = (\cosh c \cos \theta, \cosh c \sin \theta, \sinh c).$$
Following Definition \ref{limacon-def}, we need to reflect $A$ across $\gamma_P$. Since the reflection in $\mathbb H^2$ across $\gamma_P$ is the restriction to $\mathbb H^2$ of the reflection in $\mathbb R^{2,1}$ across $\Pi_P$, it follows that $\mathcal L$ can be parametrized as
$$
L(\theta) = A-2q(A,\nu(\theta))\nu(\theta).
$$
The point of $\mathcal C$ at minimal distance from $A$ is $X = (\sinh c, 0, \cosh c)$. Since $a > c$, then $X$ is in the compact region bounded by the smaller loop of the hyperbolic lima\c{c}on.
The strategy now is to compute the distance between $X$ and $\mathcal L$, then the smaller loop of $\mathcal{L}$ will bound a disk centered at $X$ and radius the above distance.
It is well known that the hyperbolic distance in the upper hyperboloid is
$$
d(A,B) = \cosh^{-1}(-q(A,B)), \qquad A,B \in \mathbb H^2.
$$
In order  to find the critical points of the function $\theta\mapsto d(X, L(\theta))$,
it is enough to find the critical points of the function $\theta\mapsto q(X,L(\theta))$.
We have
$$q(X,{L}(\theta)) = -\cosh(a-c) + q(\theta) \sinh 2c(1-\cos \theta),$$
where $q(\theta) \coloneqq q(A,\nu(\theta)) = \cosh c \sinh a \cos \theta - \sinh c \cosh a$.
Explicit computations give 
$$\frac{d}{d\theta}\left(q(X,{L}(\theta))\right) = \sinh 2c \sin \theta(2\sinh a \cosh c \cos \theta - \sinh(a+c)).$$
Hence critical points are given by 
\begin{equation}
\label{hyper-rels}
\sin \theta = 0, \qquad \text{ and } \qquad \cos\theta = \frac{\sinh (a+c)}{2\sinh a \cosh c}.
\end{equation}
The case $\theta = 0$ yields a new proof of \cite[Lemma 2.5, part 1]{nelli-pipoli}.
The case $\theta = \pi$ produces a disk centered at $X$ and radius $a+3c$, which is worse than the disk in \cite[Lemma 2.5, part 3]{nelli-pipoli}.
The case of interest is now the last one.
Let $\theta_0$ be such that $\cos \theta_0$ satisfies the second identity in \eqref{hyper-rels}. Then
$$q(X,{L}(\theta_0)) = -\cosh(a-c)+\frac{\sinh c}{2\sinh a}\sinh^2(a-c).$$
We then have
\begin{equation*}
d(X,\mathcal{L}) = \ell(a,c) = \cosh^{-1}\left(\cosh(a-c)-\frac{\sinh c}{2\sinh a} \sinh^2(a-c) \right),
\end{equation*}
hence the smaller loop of $\mathcal{L}$ bounds a disk of center $X$ and radius $\ell(a,c)$.
\end{proof}

We conclude this section with a list of properties of $\ell$ which will be useful for the estimates in the proof of Theorem \ref{main-thm}. 

\begin{lemma}
\label{limacon-rmk}
The following properties hold.
\begin{enumerate}
\item \label{item:lim1} For any $a>c \geq 0$, $\ell(a,0) = a$, $\ell(a,a) = 0$, and $\ell(a,c) > 0$. Moreover $\ell(a,c) < a-c$.
\item \label{item:lim2} The function $(a,c)\mapsto\ell(a,c)$ with domain $\{(a,c)\in\mathbb R^2 : a>c>0\}$ is increasing in the first variable and decreasing in the second one.
\item \label{item:lim3} For any $x>0$, then $\ell(4x,2x)>x$.
\end{enumerate}
\end{lemma}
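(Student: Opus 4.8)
The plan is to exploit the fact that $\ell(a,c) = \cosh^{-1}(F(a,c))$, where $F(a,c) \coloneqq \cosh(a-c) - \frac{\sinh c}{2\sinh a}\sinh^2(a-c)$, and that $\cosh^{-1}$ is strictly increasing on $[1,\infty)$. Thus every assertion about $\ell$ translates into an assertion about the argument $F$, and all three parts reduce to elementary inequalities among hyperbolic functions. The single tool I would use throughout is the addition formula $\sinh a = \sinh c\cosh(a-c) + \cosh c\sinh(a-c)$ (and its cosine analogue) together with the positivity of $\sinh$ and $\cosh$ on the range $a > c > 0$.

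For part (\ref{item:lim1}), the boundary values $\ell(a,0) = a$ and $\ell(a,a) = 0$ follow by direct substitution, since $\sinh 0 = 0$ and $\cosh 0 = 1$. To see $\ell(a,c) > 0$ I would show $F(a,c) > 1$: setting $u \coloneqq a-c$ and using $\cosh u - 1 = 2\sinh^2(u/2)$ and $\sinh^2 u = 4\sinh^2(u/2)\cosh^2(u/2)$, this reduces to the inequality $\sinh c\,\cosh^2(u/2) < \sinh a$, which by the addition formula is equivalent to $\tfrac12\sinh c\,(\cosh u - 1) + \cosh c\,\sinh u > 0$, manifestly true. The bound $\ell(a,c) < a-c$ is then immediate, since for $c > 0$ the subtracted term in $F$ is strictly positive, whence $F(a,c) < \cosh(a-c)$, and monotonicity of $\cosh^{-1}$ finishes it.

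For part (\ref{item:lim2}) I would differentiate $F$ and, after factoring out $\sinh(a-c) > 0$, reduce the sign of $\partial_c F$ and $\partial_a F$ to two polynomial inequalities in $\sinh(a-c)$, $\cosh(a-c)$, $\sinh c$, $\cosh c$. Substituting the addition formulas (it is convenient to write $s = \sinh(a-c)$, $k = \cosh(a-c)$, $p = \sinh c$, $q = \cosh c$, with $k^2-s^2 = q^2-p^2 = 1$), I expect both expressions to collapse to sums of manifestly positive terms: the bracket governing $\partial_c F$ should simplify to $3\cosh c\,\sinh(a-c) > 0$, and the numerator governing $\partial_a F$ to $3pqks + s^2(2q^2 + p^2) > 0$. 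This yields $\partial_c F < 0$ and $\partial_a F > 0$, i.e.\ $\ell$ increasing in $a$ and decreasing in $c$. The main obstacle here is purely organizational, namely carrying out the two expansions without algebraic slips; there is no conceptual content once the addition formulas are inserted.

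For part (\ref{item:lim3}) I would reduce $\ell(4x,2x) > x$ to $F(4x,2x) > \cosh x$. Using $\sinh(4x) = 2\sinh(2x)\cosh(2x)$ the subtracted term simplifies to $\sinh^2(2x)/(4\cosh(2x))$, and after clearing denominators the inequality becomes $3\cosh^2(2x) + 1 > 4\cosh x\,\cosh(2x)$. Setting $y \coloneqq \cosh x \geq 1$ and $\cosh(2x) = 2y^2 - 1$ turns this into the polynomial inequality $3y^4 - 2y^3 - 3y^2 + y + 1 > 0$ for $y > 1$. Since $y = 1$ is a root, this factors as $(y-1)(3y^3 + y^2 - 2y - 1)$, and the cubic is positive for $y \geq 1$ because $3y^3 + y^2 - 2y - 1 = (3y^3 - 2y) + (y^2 - 1) \geq 1$ there. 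As $y - 1 > 0$ for $x > 0$, the claim follows. I expect this passage to a single-variable polynomial to be the cleanest route, the only care needed being the verification of the cubic's positivity.
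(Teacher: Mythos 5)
Your proposal is correct and follows essentially the same route as the paper: all three parts are reduced to inequalities for the argument of $\cosh^{-1}$, part (2) by differentiating and simplifying via the addition formula $\sinh a=\sinh(a-c)\cosh c+\cosh(a-c)\sinh c$ (your predicted simplifications $3\cosh c\,\sinh(a-c)$ for the $c$-derivative and $s^2(2q^2+p^2)+3pqks$ for the $a$-derivative are exactly right, the latter being a slightly cleaner identity than the paper's intermediate bound), and part (3) by the same quartic in $\cosh x$ with root $y=1$ factored out. The only difference is cosmetic: you supply the short computation behind $\ell(a,c)>0$ in part (1), which the paper leaves as ``direct from the definition.''
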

\begin{proof}
The properties in (\ref{item:lim1}) follow directly by the definition of $\ell$, cf.\ \eqref{improvement-distance}.

As for (\ref{item:lim2}), we have
\begin{align*}
\frac{\partial}{\partial a}\cosh(\ell(a,c))&=\frac{\sinh(a-c)}{2\sinh^2a}\left(2\sinh^2a-\sinh^2c-\cosh(a-c)\sinh a\sinh c\right)\\
&>\frac{\sinh(a-c)}{2\sinh a}\left(\sinh a-\cosh(a-c)\sinh c\right)\\
&=\frac{\sinh^2(a-c)\cosh c}{2\sinh a}>0,
\end{align*}
where we have used the fact that $a>c>0$. Likewise
\begin{align*}
\frac{\partial}{\partial c}\cosh(\ell(a,c))&=-\frac{\sinh(a-c)}{2\sinh a}(2\sinh a + \cosh c \sinh(a-c)-2\sinh c \cosh(a-c)) \\
& = -\frac{3\sinh^2(a-c)\cosh c}{2\sinh a} < 0.
\end{align*}
Since the functions $\sinh$ and $\cosh$ are increasing in $[0,+\infty)$, the claim follows.

Let us now prove (\ref{item:lim3}). By \eqref{improvement-distance} we have 
\begin{align*}
\ell(4x,2x) & =\cosh^{-1}\left(\cosh(2x)-\frac{\sinh^2(2x)}{4\cosh(2x)}\right)\\
& =\cosh^{-1}\left(\frac{3(2\cosh^2 x-1)^2+1}{4(2\cosh^2x-1)}\right).
\end{align*}
It follows that $\ell(4x,2x)>x$ if and only if $$\frac{3(2\cosh^2 x-1)^2+1}{4(2\cosh^2x-1)}>\cosh x,$$ namely $(\cosh(x)-1)(\cosh^2 x + (\cosh x-1)(3\cosh^2 x + 3\cosh x + 1))>0$.
The latter holds true for all $x>0$, and we are done.
\end{proof}

\section{Ros--Rosenberg type theorem}
\label{main-result}

The second goal of the present paper is to prove a topological result about compact connected $H_r$-hypersurfaces embedded in $\mathbb H^n \times\mathbb R$ with planar boundary. This is a generalization of the classical result of Ros and Rosenberg \cite{ros-rosenberg} about the topology of constant mean curvature surfaces in the Euclidean three-dimensional space. 

\begin{theorem}
\label{main-thm}
Let $M$ be a compact connected hypersurface embedded in $\mathbb H^n \times [0,\infty) \subset \mathbb H^n \times \mathbb R$ with boundary a closed horoconvex  $(n-1)$-dimensional hypersurface $\Gamma$ embedded in the horizontal slice $\mathbb H^n \times \{0\}$. Assume $M$ has constant $r$-th mean curvature $H_r > (n-r)/n$ for some $r = 1,\dots, n$.
Then there is a constant $\delta = \delta(n,r,H_r) > 0$ small enough such that, if $\Gamma$ is contained in a disk of radius $\delta$, then $M$ is topologically a disk. 
\end{theorem}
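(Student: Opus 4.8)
The plan is to show that $M$ is a vertical graph over the domain $\Omega \subset \mathbb{H}^n \times \{0\}$ bounded by $\Gamma$. Since $\Gamma$ is an embedded closed hypersurface, $\Omega$ is a topological disk, and a graph over a disk is itself a disk, so this gives the conclusion. The entire argument rests on the Alexandrov reflection technique applied with \emph{vertical} hyperplanes, that is, hypersurfaces of the form $\pi \times \mathbb{R}$ with $\pi$ a totally geodesic hyperplane of $\mathbb{H}^n$. To run reflection we need a tangency principle for $H_r$-hypersurfaces: as recalled in Remark \ref{convex-point}, the geometry of our barriers and of $M$ itself forces the existence of strictly convex points, which guarantees the validity of the Fontenele--Silva tangency principle \cite{fontenele-silva}. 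The hypothesis $H_r > (n-r)/n$ is exactly what makes the model spheres $\mathcal{S}_r$ of Theorems \ref{str-thm2}, \ref{str-thm4}, \ref{str-thm6}, \ref{str-thm8} compact, and these compact spheres, of fixed radius $R_{\mathcal{S}_r}$ depending only on $(n,r,H_r)$, are the basic obstacles used as barriers.

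First I would establish an a priori confinement of $M$ that does not deteriorate as $\Gamma$ shrinks. Using $\mathcal{S}_r$ as a barrier from above --- sliding a sphere of the same constant $H_r$ vertically down towards $M$ and invoking the tangency principle at a first interior contact --- one obtains a height bound, so that $M$ lies in a slab $\mathbb{H}^n \times [0,h_0]$ with $h_0 = h_0(n,r,H_r)$. Horizontal confinement comes from comparison with the horizontal cylinders $\mathcal{C}_{r,\epsilon}$ of Theorem \ref{str-thm-cyl2}: by Lemma \ref{radii-comparison} one may fix $\epsilon$ with $R_{\mathcal{C}_{r,\epsilon}} < R_{\mathcal{S}_r}$, and translating these narrow cylinders against $M$ keeps the hypersurface within a bounded horizontal neighbourhood of $\Omega$. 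To match these barriers to the small scale of the boundary near $\Gamma$, I would use the annuli and peaked spheres of Theorems \ref{str-thm3}, \ref{str-thm7} together with the scale estimates $\lim_{d_r \to 0}\rho_{H_r}^* = 0$ and $\lim_{d_r \to 0}\rho_-/\rho_{H_r}^* = 0$ of Lemma \ref{various-estimates}, which ensure that the relevant barrier caps fit near $\Gamma$.

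Next comes the reflection itself. For each totally geodesic direction in $\mathbb{H}^n$, foliate the ambient space by the associated vertical hyperplanes, bring one in from infinity until it first meets $M$, and then keep moving it while reflecting the cap of $M$ left behind and comparing the reflected cap with $M$ through the tangency principle, exactly in the spirit of the proof of Proposition \ref{prop:special-cases}(\ref{item1}). The reflection can be pushed forward as long as the reflected image of $\Gamma$ does not re-enter the part of $\Omega$ over which the comparison is being made. This is precisely where the hyperbolic limaçon enters: reflecting the boundary across the sliding vertical hyperplanes, whose traces on $\mathbb{H}^n \times \{0\}$ are geodesic hyperplanes tangent to a fixed geodesic sphere, sweeps out a region described by the limaçon $\mathcal{L}$ of Definition \ref{limacon-def}. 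The sharpened containment estimate of Lemma \ref{limacon-lemma}(2), together with the monotonicity of $\ell$ in Lemma \ref{limacon-rmk}(\ref{item:lim2}) and the inequality $\ell(4x,2x)>x$ of Lemma \ref{limacon-rmk}(\ref{item:lim3}), shows that once $\Gamma$ is contained in a disk of sufficiently small radius $\delta = \delta(n,r,H_r)$, the reflected boundary stays inside $\Omega$ throughout the process. Carrying this out for every direction prevents $M$ from folding back over $\Omega$, and hence forces $M$ to be a vertical graph over $\Omega$.

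The main obstacle is the control of the reflected boundary in the previous step. If $\delta$ is not small enough, the reflected copy of $\Gamma$ can cross back over $\Omega$ before the vertical hyperplane reaches its critical position, so that the first contact between the reflected cap and $M$ occurs at the boundary rather than as a genuine interior tangency; the reflection then stalls and yields no monotonicity. The quantitative input that resolves this is exactly the improved limaçon bound \eqref{improvement-distance}: it certifies that the inner loop of $\mathcal{L}$ clears a disk of radius comparable to $\delta$, so no premature boundary contact occurs. This is what allows us to dispense with the pinching hypothesis present in \cite[Theorem 4.1]{nelli-pipoli}. Once $M$ is known to be a graph over the disk $\Omega$, the topological conclusion is immediate.
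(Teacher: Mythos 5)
Your overall plan---to show that $M$ is a vertical graph over $\Omega$ and conclude from there---cannot work, because that statement is false under the hypotheses of the theorem. For $H_r > (n-r)/n$ the compact rotational spheres $\mathcal S_r$ exist, and the model example to keep in mind is $\mathcal S_r$ with a small cap removed: $\Gamma$ is a small geodesic sphere, $\Omega$ is the small disk it bounds, and $M$ is the \emph{large} remaining portion of $\mathcal S_r$, which is certainly not a graph over $\Omega$ (it overhangs $\Omega$ and doubles back over the exterior of $\Omega$). This is precisely why the conclusion of the theorem is only topological. The paper's proof is organized around the dichotomy $h_M < h_{\mathcal C_r}$ versus $h_M \geq h_{\mathcal C_r}$: in the short case $M$ is indeed trapped in $\Omega\times\mathbb R$ and is a graph over $\Omega$ (Claims I--III), but in the tall case one shows instead that $M$ decomposes as the union of two graphs --- the part above $t=h_M/2$ is a vertical graph (horizontal Alexandrov reflection), and the part outside the cylinder $\Omega\times\mathbb R$ is a horizontal graph over a domain of $\Gamma\times\mathbb R$ (vertical Alexandrov reflection) --- and then proves the hard statement that $M\cap(\Omega\times(0,h_M/2])=\varnothing$, so that these two graphs glue to a disk. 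Your proposal contains no analogue of this last step, which is the core of the argument (Claims IV--VII): it requires inserting the annuli/peaked-sphere barriers $\Sigma$ of Theorems \ref{str-thm3}, \ref{str-thm7}, \ref{str-thm4}, \ref{str-thm8} from below into the region $W$, and the lima\c con estimate of Lemma \ref{limacon-lemma}(2) together with Lemma \ref{various-estimates} is used there to guarantee that the top boundary circle $C_1$ of $\Sigma$ has room to move inside the disk $D(R)$ cleared out at height $h_M-h^*$.

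There is also a directional confusion in your use of reflections: reflecting across \emph{vertical} hyperplanes can never establish that $M$ is a \emph{vertical} graph over $\Omega$; it yields horizontal symmetry information (in the paper, that $M$ is a graph over a piece of $\Gamma\times\mathbb R$ outside the cylinder, and in Lemma \ref{lemma5} that $M$ is a vertical graph over a small sub-disk $D(r_{min})$ whose size is controlled by the lima\c con). Vertical graphicality over a planar domain comes from \emph{horizontal} hyperplanes descending from above. Finally, the role you assign to the lima\c con (controlling a reflected copy of $\Gamma$ during vertical-plane reflection so that it ``stays inside $\Omega$'') is not how it is used: the lima\c con arises as the locus of reflections of a single contact point $q\in\Gamma$ (respectively, of an endpoint $p$ of a long geodesic segment at height $h_M-h^*$ in Claim V), and its inner loop certifies a definite disk over which $M$ is a graph. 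So while you have correctly identified most of the ingredients (cylinders $\mathcal C_{r,\epsilon}$, spheres $\mathcal S_r$, Lemma \ref{radii-comparison}, Lemma \ref{various-estimates}, the lima\c con), the architecture into which they must fit is missing, and the architecture you propose instead proves a statement that is false.
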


We recall that a hypersurface $\Gamma$ of the hyperbolic space is called \emph{horoconvex} if all its principal curvatures are larger than one.

\begin{remark}
Let us make a few observations.
\begin{enumerate}
\item When $r=1$, Theorem \ref{main-thm} improves \cite[Theorem 4.1]{nelli-pipoli}. In fact, thanks to the new estimates given in Lemma \ref{limacon-lemma}(2), we do not need to assume any pinching on $\Gamma$. 
\item Elbert--Sa Earp \cite[Theorem 7.7]{elb-earp} proved that when $n>r$ and $0<H_r \leq (n-r)/n$, 
then a compact connected $H_r$-hypersurface $M$ embedded in $\mathbb H^n \times [0,\infty)$ with horoconvex boundary $\Gamma$ in the slice $\mathbb H^n \times \{0\}$ is necessarily a graph over the compact planar domain bounded by $\Gamma$.
In particular $M$ is a disk. 
Therefore, we focus on the cases $n>r$, with $H_r > (n-r)/n$, and $n=r$, with $H_n > 0$.
\item By using Alexandrov reflections with respect to vertical hyperplanes, we can show that $M$ shares the same symmetries of its boundary. In particular, when $\Gamma$ is a geodesic sphere, $M$ is rotationally symmetric.
It follows that $M$ is a portion of one of the compact hypersurfaces classified in Section \ref{rotational-surfaces}, and Theorem \ref{main-thm} is proved in this special case. 
\end{enumerate}
\end{remark}

In view of the previous remark, we will assume throughout that $\Gamma$ is not a geodesic sphere.

\begin{remark}
\label{convex-point}
In the following we will do extensive use of the tangency principle for $H_r$-hypersurfaces as it is stated in 
\cite[Theorem 1.1]{fontenele-silva}. In order to satisfy the assumptions there,  it is enough that the  hypersurface $M$ 
in Theorem \ref{main-thm} has a strictly convex point. This is guaranteed by \cite[Lemma 7.5]{elb-earp}.
\end{remark}

\begin{notations*}
Let us introduce some notations that will be useful in the proof of Theorem~\ref{main-thm}. 
For the  reader's convenience, there is a list of notations at the end of the article.
We denote by $\Omega$ the compact domain of $\mathbb H^n\times\{0\}$ bounded by $\Gamma$ and by $W$ the compact domain in $\mathbb H^n \times \mathbb R$ with boundary $M\cup\Omega$.
Given $n \geq r \geq 2$, and $H_r > (n-r)/n$, we fix an $\epsilon > 0$ such that Lemma \ref{radii-comparison} is satisfied. Denote by $\mathcal C_r \coloneqq \mathcal C_{r,\epsilon}$ the corresponding translation $H_r$-hypersurface of Theorem \ref{str-thm-cyl2}.
When $r=1$ we use the same notation, however recall that no choice of $\epsilon$ is involved.
Let $h_{\mathcal C_r}$ denote the height of $\mathcal C_r$ (namely $2\mu_{H}(\rho_+)$ for $r=1$ and $2\mu_{H_r,\epsilon}(\rho_+^{\epsilon})$ for $r > 1$).
Analogously let $h_{\mathcal S_r}$ be the height of $\mathcal S_r$ (i.e.\ $2\lambda_{H_r,0}(\rho_+)$, cf.\ Theorems \ref{str-thm2}, \ref{str-thm4}, \ref{str-thm6}, \ref{str-thm8}). 
We define $h_M$ to be the height of $M$ with respect to the slice $\mathbb H^n \times \{0\}$. 
The \emph{exterior} (resp.\ \emph{interior}) \emph{radius} of $\Gamma$ is the smaller (resp.\ larger) radius $\rho$ such that for any $p \in \Gamma$ there is a geodesic sphere $S$ with radius $\rho$ tangent to $\Gamma$ at $p$ and $\Gamma$ sits in (resp.\ encloses) the closed ball with boundary $S$. We write $r_{ext}$ for the exterior radius and $r_{int}$ for the interior one. Clearly $r_{ext} \geq r_{int}$, and equality occurs if and only if $\Gamma$ is a geodesic sphere. Moreover, since $\Gamma$ is horoconvex, $r_{int}$ and $r_{ext}$ are determined by the maximum and the minimum of the principal curvatures of $\Gamma$.
Finally we denote by $D(R)$ any disk of radius $R > 0$ in a horizontal slice of $\mathbb H^n \times \mathbb R$.
\end{notations*}

The strategy of the proof of Theorem \ref{main-thm} is similar to that of  \cite{nelli-pipoli}: if the height of $M$ is less than the height of $\mathcal C_r$, then $M$ is a graph over $\Omega$, otherwise it is a union of hypersurfaces, each one a graph over a suitable domain. As in \cite{ros-rosenberg}, at the end of the proof it will be clear that the union of such graphs has the topology of the disk. The hyperbolic lima\c{c}on described in Section \ref{limacon} will be used in various estimates.

\begin{lemma}
\label{lemma5}
Let $M$ and $\Gamma$ satisfy the assumptions of Theorem \ref{main-thm}. There is a disk $D(r_{min})$ in $\mathbb H^n \times \{0\}$
such that $M \cap (D(r_{min}) \times \mathbb R)$ is a graph, and $\ell(r_{ext},r_{ext}-r_{int}) \leq r_{min} < r_{int}$.
In particular, $r_{min}$ depends only on the principal curvatures of $\Gamma$.
\end{lemma}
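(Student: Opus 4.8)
The plan is to run the Alexandrov reflection scheme of Nelli--Pipoli \cite{nelli-pipoli} with respect to vertical totally geodesic hyperplanes, and to extract the size of the resulting graphical region from the sharpened limaçon estimate of Lemma \ref{limacon-lemma}(2). First I would fix an arbitrary point $A \in \Gamma$ and take the two geodesic spheres tangent to $\Gamma$ at $A$ that realize the exterior and interior radii: the sphere $S_{ext}$ of radius $r_{ext}$ whose ball contains $\Gamma$, with center $C$, and the sphere $S_{int}$ of radius $r_{int}$ whose ball is enclosed by $\Gamma$, with center $X$. Both centers lie on the inner normal to $\Gamma$ at $A$, so that $d(A,C) = r_{ext}$, $d(A,X) = r_{int}$ and $d(C,X) = r_{ext}-r_{int}$. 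Setting $a := r_{ext}$ and $c := r_{ext}-r_{int}$, the sphere $\mathcal C$ of radius $c$ centered at $C$ has $X$ as its point closest to $A$, so the hyperbolic limaçon $\mathcal L$ with base point $A$ and defining sphere $\mathcal C$ has $a>c$ (as $r_{int}>0$). By Lemma \ref{limacon-lemma}, $\mathcal L$ has two nested loops, and its inner loop encloses the disk $D(\ell(a,c))$ centered at $X$ while being contained in the disk $D(r_{int})$ centered at $X$.

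Next I would establish that reflecting $M$ across each vertical hyperplane $\pi_P \times \mathbb R$, where $\pi_P$ is the totally geodesic hyperplane tangent to $\mathcal C$ at a point $P$, is admissible. The content is that the cap of $\Gamma$ lying on the far side of $\pi_P$ from $X$ reflects into $\bar\Omega$: this is exactly where horoconvexity and the definitions of $r_{ext}$ and $r_{int}$ are used to pin down the position of $\Gamma$ relative to $\pi_P$, and it is the reason the relevant defining sphere has radius $c = r_{ext}-r_{int}$. Granting this, the tangency principle for $H_r$-hypersurfaces, which applies thanks to the strictly convex point (cf.\ Remark \ref{convex-point} and \cite{fontenele-silva}), forces the reflected cap of $M$ to remain inside $W$ and below $M$. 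As $P$ ranges over $\mathcal C$ the reflected images $A_P$ of the base point trace $\mathcal L$, so the admissible reflections sweep out the region enclosed by the inner loop; over that region no vertical line can meet $M$ twice, since a second intersection would produce a forbidden interior tangency with a reflected copy. Hence $M$ is a graph over the region bounded by the inner loop, and in particular over the disk $D(\ell(a,c))$ centered at $X$.

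Finally I would take $r_{min} := \ell(r_{ext},r_{ext}-r_{int})$, so that $M \cap (D(r_{min})\times \mathbb R)$ is a graph by the previous step; the lower bound $\ell(r_{ext},r_{ext}-r_{int}) \le r_{min}$ then holds trivially, while the strict upper bound $r_{min} < r_{int}$ is precisely $\ell(a,c) < a-c$ from Lemma \ref{limacon-rmk}(\ref{item:lim1}). If instead one lets $r_{min}$ denote the radius of the largest disk centered at $X$ over which $M$ is graphical, the containments $D(\ell(a,c)) \subseteq \{\text{inner-loop region}\} \subseteq D(r_{int})$ again give $\ell(r_{ext},r_{ext}-r_{int}) \le r_{min} < r_{int}$. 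Since $r_{ext}$ and $r_{int}$ are determined by the extremal principal curvatures of $\Gamma$, so is $r_{min}$.

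The hard part is the second step: verifying rigorously that, for every hyperplane tangent to $\mathcal C$, the far cap of $\Gamma$ reflects into $\bar\Omega$, so that the reflected piece of $M$ is trapped in $W$ and the tangency principle can be invoked. This is the geometric heart of the argument, and it is exactly here that replacing the crude bound $a-c$ by the sharper distance $\ell(a,c)$ of Lemma \ref{limacon-lemma}(2) enlarges the guaranteed graphical disk and thereby removes the pinching hypothesis imposed in \cite{nelli-pipoli}.
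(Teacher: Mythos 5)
Your limaçon set-up (parameters $a=r_{ext}$, $c=r_{ext}-r_{int}$, the sphere $\mathcal C$ concentric with the exterior tangent sphere, the bounds from Lemma \ref{limacon-lemma}(2) and Lemma \ref{limacon-rmk}(\ref{item:lim1})) matches the paper's. But there is a genuine gap in how you conclude graphicality. The paper's proof begins with Alexandrov reflection across \emph{horizontal} slices coming down from above: either this reaches $t=0$ and $M$ is already a graph over $\Omega$, or it produces a first touching point $q\in\Gamma$ with the key property that the vertical line $\{q\}\times(0,\infty)$ meets $M$ exactly once, with the segment $\{q\}\times(0,2t_0)$ in the interior of $W$. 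The limaçon is then based at this specific $q$ (with $\mathcal C_{ext}$ tangent to $\Gamma$ at $q$), and the vertical-plane reflections are used to \emph{transport} the ``meets $M$ exactly once'' property from $q$ to the reflected points $q_P$ and hence to the region bounded by the smaller loop. You omit the horizontal step entirely, base the limaçon at an arbitrary $A\in\Gamma$, and assert that over the inner-loop region ``no vertical line can meet $M$ twice, since a second intersection would produce a forbidden interior tangency with a reflected copy.'' That implication does not hold: reflections across \emph{vertical} hyperplanes preserve vertical lines, so admissibility of those reflections controls horizontal graphicality of the far caps of $M$ and confinement of their reflections in $W$, but it says nothing by itself about the number of times a vertical line meets $M$. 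Two intersections of a vertical line with $M$ do not create a tangency between $M$ and a vertically reflected copy of itself.

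Concretely, without the anchor point $q$ there is nothing to propagate, and your argument would ``prove'' vertical graphicality in situations where only the horizontal reflection step (via the Maximum Principle) actually rules out the bad configurations. The fix is exactly the paper's first paragraph: run the horizontal Alexandrov reflection first, handle the case where it reaches $t=0$ (then $M$ is a graph over $\Omega$ and any $r_{min}<r_{int}$ works), and otherwise extract $q\in\Gamma$ with the single-intersection property before constructing the limaçon at $q$. The rest of your write-up (choice of $\mathcal C$, horoconvexity placing the smaller loop inside $\Omega$, and the inequalities $\ell(r_{ext},r_{ext}-r_{int})\leq r_{min}<r_{int}$) would then go through as in the paper.
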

\begin{proof}
In order to prove the statement, we apply Alexandrov's reflection technique with horizontal hyperplanes coming down from above.
Since $M$ is compact, the slice $\mathbb H^n \times \{t\}$, $t > h_M$ does not intersect $M$. 
Then we let $t$ decrease. When $t<h_M$, reflect the part above the slice and stop if there is a first contact point between $M$ and its reflection.
If we can get to $t=0$ without having contact points, then $M$ is a graph over $\Omega$ and we can choose $r_{min}< r_{int}.$
If this does not happen, there will be a $0 < t_0 < h_M/2$ such that the reflected hypersurface touches $M$ for the first time. 
If the intersection point lied in the interior of $M$ we would have a contradiction with the Maximum Principle, hence a first touching point belongs to $\Gamma$.
Let $q$ be one of such points.
Then the line $\{q\} \times (0,\infty)$ intersects $M$ exactly once, and $\{q\} \times (0,2t_0)$ is contained in the interior of $W$, as $t_0 < h_M/2$.
Note that the portion of $M$ above $\mathbb H^n \times \{t_0\}$ is a graph.

We now perform Alexandrov's reflections with respect to vertical hyperplanes, i.e.\ the product of a totally geodesic hypersurface of $\mathbb H^n$ and $\mathbb R$. Let $Q$ be one of such hyperplanes. Since $M$ is compact, we can assume that $Q\cap M=\varnothing$. Fix a point $x\in Q$ and let $\gamma$ be the geodesic passing through $x$ and orthogonal to $Q$. Move $Q$ along $\gamma$ towards $M$ such that $Q$ is always orthogonal to $\gamma$. By abuse of notation, we call again $Q$ any parallel translation of the initial hyperplane. When $Q$ touches $M$ for the first time, keep moving $Q$  and start reflecting through $Q$ the part of $M$ left behind $Q$. In order not to have a contradiction with the Maximum Principle, we can continue this procedure with no contact points between $M$ and its reflection until $Q$ enters $\Gamma$ at distance at least $r_{int}$ from it. 

We can avoid the dependence on the contact point $q$ by stopping reflecting when $Q$ is tangent to $\mathcal{C}$, where $\mathcal{C}$ is as follows.
Denote by $\mathcal{C}_{ext}$ the geodesic sphere in $\mathbb H^n \times \{0\}$ of radius $r_{ext}$, tangent to $\Gamma$ at $q$, and enclosing $\Gamma$.
Then $\mathcal{C}$ is the geodesic sphere with the same center as that of $\mathcal{C}_{ext}$ and radius equal to $r_{ext}-r_{int}$. 

Define $\mathcal{L}$ to be the set of the reflections of $q$ through any vertical hyperplane tangent to $\mathcal C$. It follows that $\mathcal{L}$ is a hyperbolic lima\c{c}on as in Definition~\ref{limacon-def} whose base point is $q$ and whose parameters are $a = r_{ext}$ and $c = r_{ext}-r_{int}$.
Since $a > c$, $\mathcal L$ has two loops. Moreover, since $\Gamma$ is horoconvex, the smaller loop of $\mathcal{L}$ sits in $\Omega$.

Furthermore, since $\{q\}\times\mathbb R$ intersects $M$ in exactly one point, then the same holds true for any $p$ in the compact planar domain bounded by the smaller loop of $\mathcal{L}$.
Define $r_{min}$ as the largest radius of a ball bounded by the smaller loop of $\mathcal L$. Then $M\cap(D(r_{min})\times\mathbb R)$ is a graph. 
Finally Lemma \ref{limacon-lemma} and Lemma \ref{limacon-rmk} imply $\ell(r_{ext},r_{ext}-r_{int}) \leq r_{min} < r_{int}$ at once.
We remark that $r_{min}$ depends only on $a$ and $c$, namely only on the curvature of $\Gamma$, but not on $q$.
\end{proof}

\begin{proof}[Proof of Theorem \ref{main-thm}]
We  first assume $h_M < h_{\mathcal C_r}$.

Recall that $R_{\mathcal C_r} = \rho_+^{\epsilon}-\epsilon$ for $r > 1$, and $R_{\mathcal C_1} = \rho_+$.
We can then adapt the proof as in Nelli--Pipoli \cite{nelli-pipoli} to our case.

\begin{customthm}{\RomanNumeralCaps{1}}
\label{claim1}
The hypersurface $M$ lies in $D(r_{ext}+R_{\mathcal C_r}) \times [0,h_{\mathcal C_r}).$
\end{customthm}
\begin{proof} Consider the $H_r$-hypersurface $\mathcal C_r$. Its lower boundary is in the slice $\mathbb H^n \times \{0\}$ and the upper boundary sits in the slice $\mathbb H^n \times \{h_{\mathcal C_r}\}$.
We call $\mathcal C_{r}$ any horizontal translation or rotation of $\mathcal C_{r}$.
Since $M$ is compact, we can translate $\mathcal C_{r}$ horizontally so that $M \cap \mathcal C_{r} = \varnothing$ and $M$ lies in the part of $\mathcal C_{r}$ containing the axis of $\mathcal C_r$. 
Then we move $\mathcal C_{r}$ isometrically towards $M$ until $\mathcal C_{r}$ touches $M$ for the first time (see Figure \ref{fig-cylinder}).
By the Maximum Principle, $\mathcal C_{r}$ and $M$ do not touch at any interior point.
Since $h_M < h_{\mathcal C_r}$, the first touching point belongs to $\Gamma$.
The same steps can be repeated for $\mathcal C_{r}$ with any horizontal axis. By definition of $r_{ext}$ we get that $M$ sits inside $D(r_{ext}+R_{\mathcal C_r}) \times [0,h_{\mathcal C_r})$.
\end{proof}

\begin{figure}[htbp]
\centerline{\includegraphics[scale=0.15]{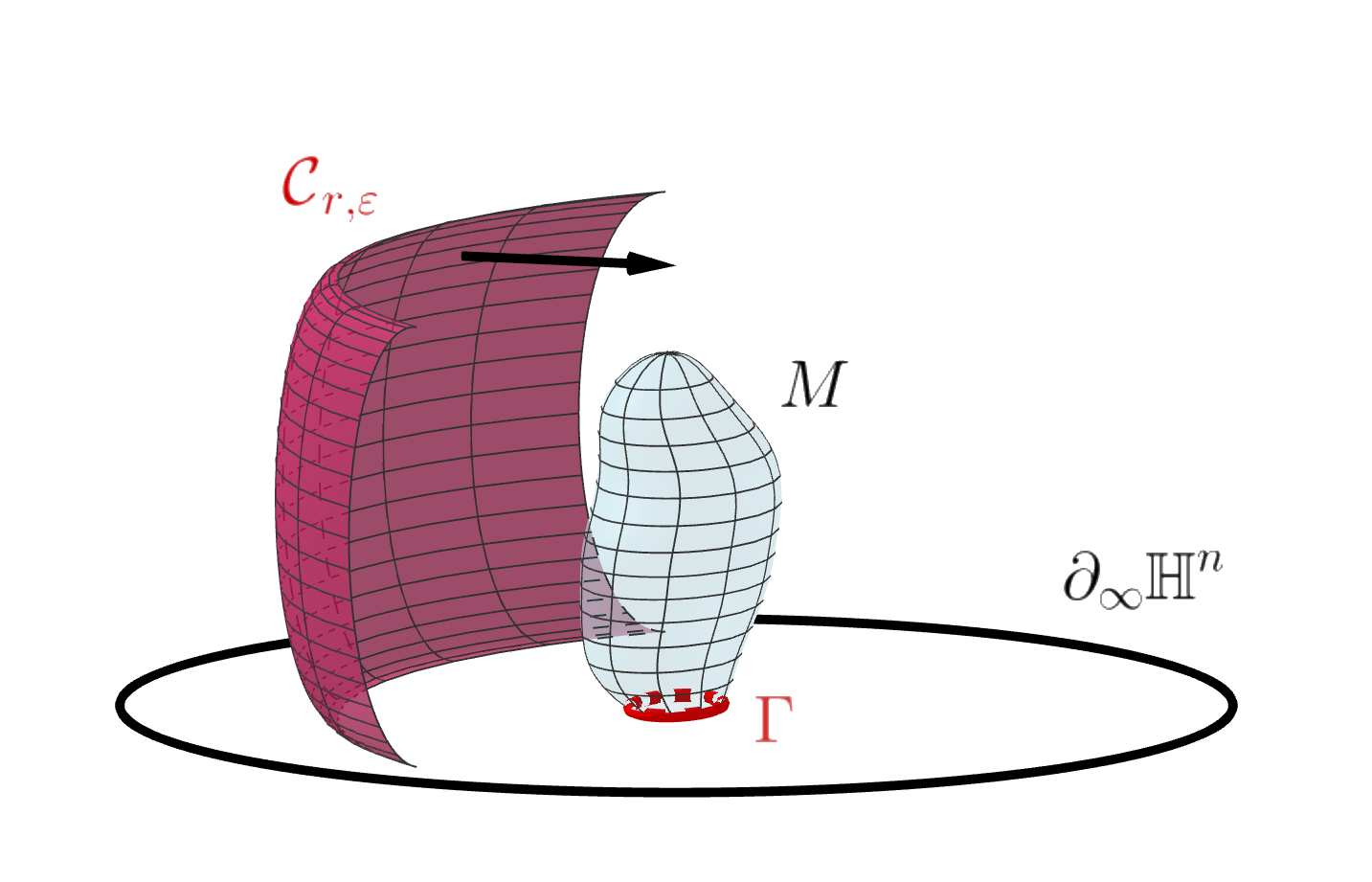}}
\caption{{Here is a representation for $n=2$. 
The black circle is the boundary at infinity of the slice $\mathbb H^2 \times \{0\}$,  $M$ is  the blue surface  whose  red boundary  $\Gamma$ lies in $\mathbb H^2 \times \{0\}.$ One moves
the purple half-cylinder  $\mathcal C_{r,\epsilon}$ isometrically towards $M$. Note that $M$ may have non-trivial topology.}}
\label{fig-cylinder}
\end{figure}

\begin{customthm}{\RomanNumeralCaps{2}}
\label{claim2}
If $\Gamma$ is sufficiently small, then $M$ is contained in the cylinder $\Omega \times \mathbb R$.\end{customthm}
\begin{proof}
By Lemma \ref{radii-comparison} and our choice of $\epsilon$ one has $R_{\mathcal C_r} < R_{\mathcal S_r}.$ 
Recall that $\mathcal S_r$ is the sphere with the same $r$-th mean curvature as that of $M$. Cut $\mathcal S_r$ with its horizontal hyperplane of symmetry and let $\mathcal S_r^+$ be the upper hemisphere. 
Now take $\Gamma$ small enough so that $R_{\mathcal C_r} + r_{ext} < R_{\mathcal S_r}$.
Translate $\mathcal S_r^+$ horizontally in such a way that the intersection of its axis of rotation with the slice $\mathbb H^n \times \{0\}$ coincides with the center of the disk found in Claim \ref{claim1}.
Translate upwards  $\mathcal S_r^+$ such that $\mathcal S_r^+\cap M=\varnothing$. 
By the Maximum Principle, Claim \ref{claim1}, and the hypothesis on $\Gamma$, we can translate $\mathcal S_r^+$ downwards without having a contact point between $\mathcal S_r^+$ and $M$ until the boundary of $\mathcal S_r^+$ is contained in the slice $\mathbb H^n \times \{0\}$, whence $M$ is below $\mathcal S_r^+$ (see Figure \ref{fig-half-sphere}).

\begin{figure}[htbp]
\centerline{\includegraphics[scale=0.15]{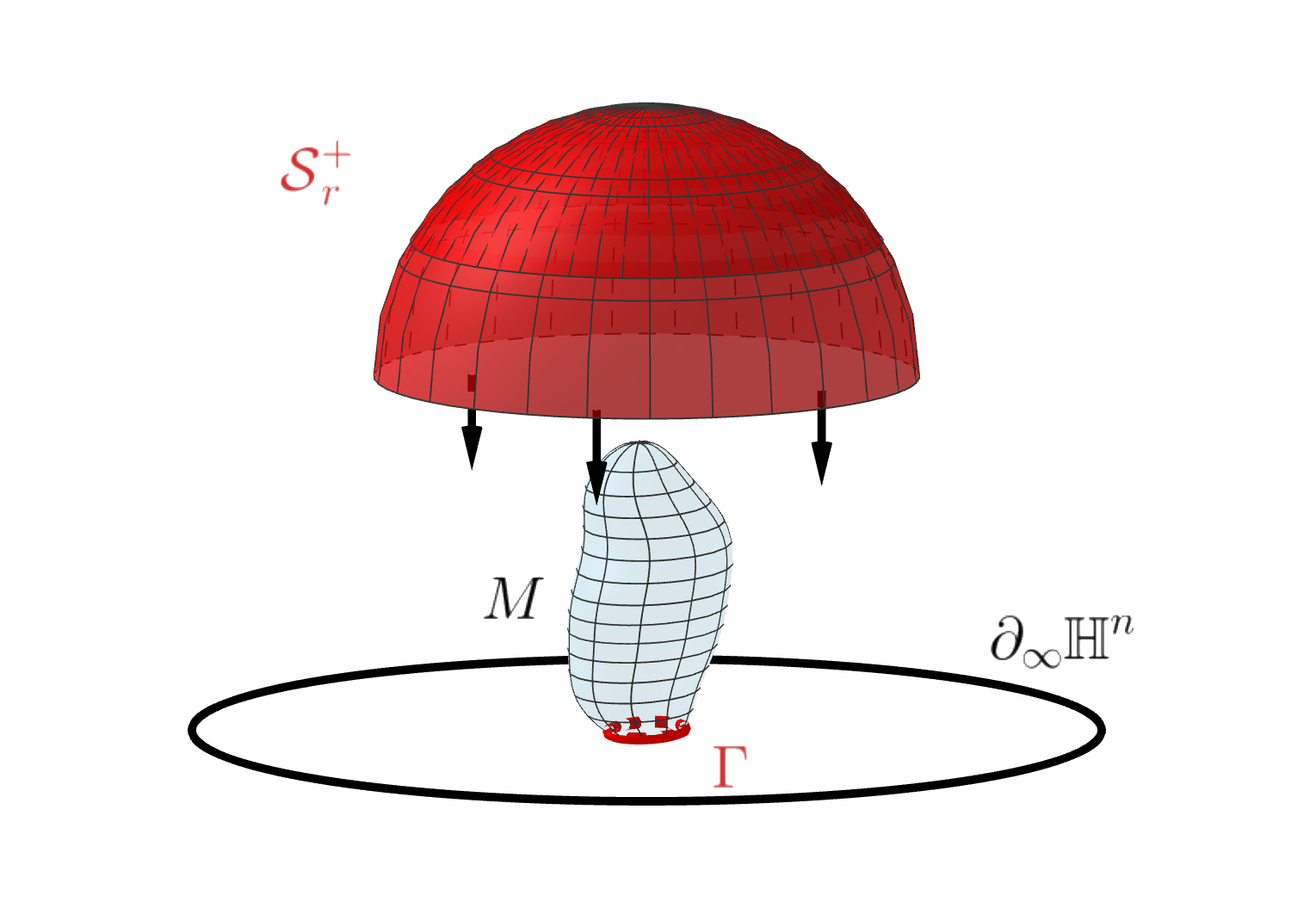}}
\caption{The red surface is a spherical cap $S_r^+$ moving downwards isometrically.}
\label{fig-half-sphere}
\end{figure}

By the Maximum Principle and the fact that $r_{ext} < R_{\mathcal S_r}$, one can translate horizontally $\mathcal S_r^+$ without having a contact point with $M$ until $\mathcal S_r^+$ becomes tangent to $\Gamma$ at any point of $\Gamma$, which gives the claim.
\end{proof}

\begin{customthm}{\RomanNumeralCaps{3}}
\label{claim3}
The hypersurface $M$ is a graph over $\Omega,$ hence it is a disk.
\end{customthm}
\begin{proof}
By Alexandrov's reflections technique with horizontal hyperplanes coming down from above, it follows that $M$ is a graph over $\Omega$, which proves Theorem \ref{main-thm} when $h_M < h_{\mathcal C_r}$.
Observe that $\delta$ can be taken as $R_{\mathcal S_r}-R_{\mathcal C_r}$, cf.\ Claim \ref{claim2}.
\end{proof}

We now assume that $h_M \geq h_{\mathcal C_r}$. 
Alexandrov's reflection technique with horizontal and vertical hyperplanes guarantees that the part of $M$ above the plane $t = h_M/2$ is a graph over a domain of $\mathbb H^n\times\{0\}$ and that the part of $M$ outside the cylinder $\Omega \times \mathbb R$ is a graph over a domain of $\Gamma\times\mathbb R$.
The goal is to prove that $M$ is the union of such graphs, i.e.\ $M \cap (\Omega \times (0,h_M/2])$ is empty.
In this way it will be clear that $M$ has the topology of a disk (see Figure \eqref{Figure-theorem-3}).

\begin{figure}[htbp]
\centerline{\includegraphics[scale=0.15]{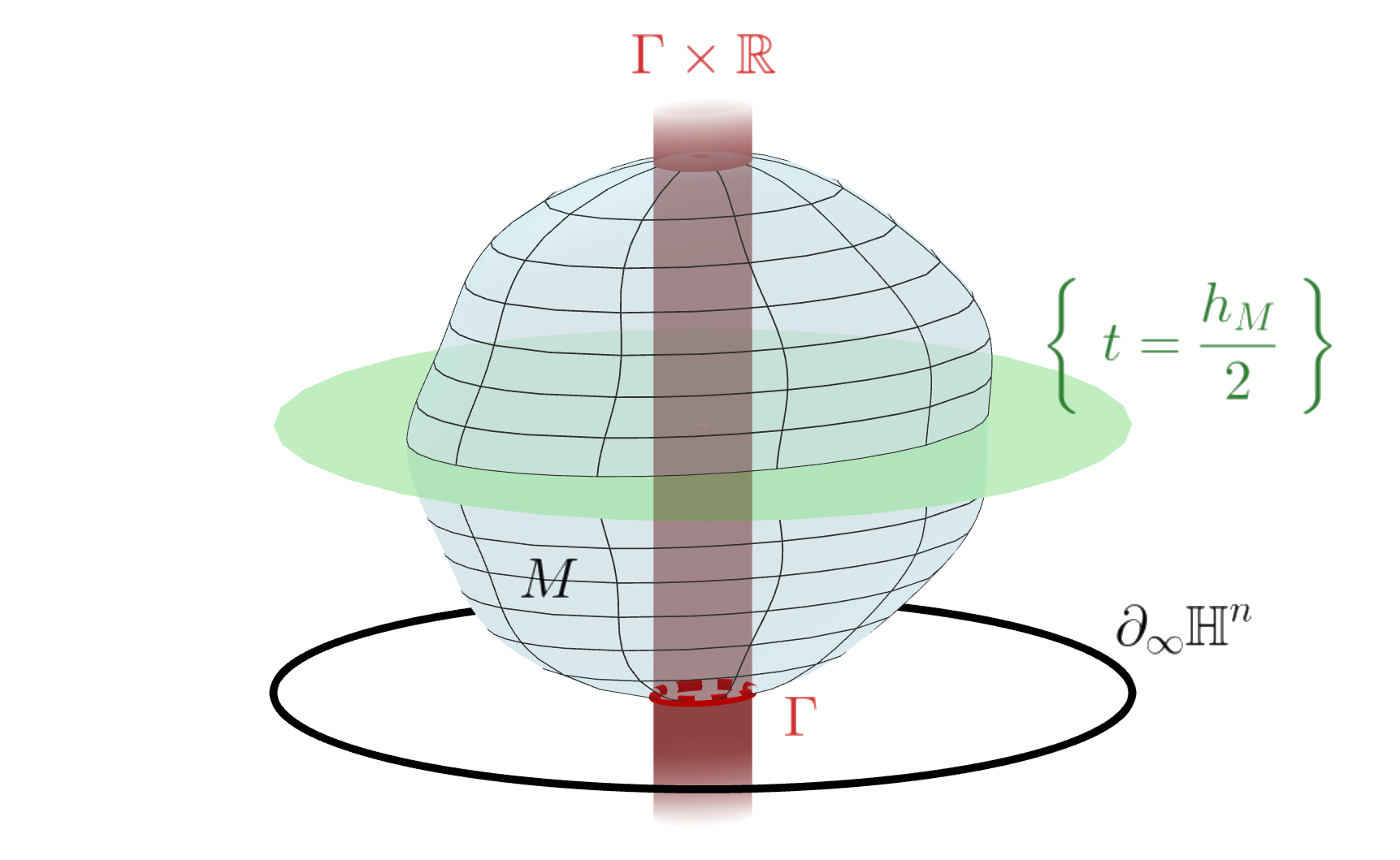}}
\caption{Decomposition of $M$: the part of $M$ above the green hyperplane and the part of $M$ outside the red cylinder are graphs.}
\label{Figure-theorem-3}
\end{figure}
 
Recall the definition of $h^*$ in \eqref{h-star} for $n>r$ and \eqref{new-h} for $n=r$.
Hereafter we show that $\Omega \times [h^*,h_M/2]$ contains no point of $M$ if $\Gamma$ is small enough, 
and lastly we prove that there is no interior point of $M$ in $\Omega \times [0,h^*]$ as well.

Before doing this we discuss how the various quantities we use are related to one another.
Let $d_r > 0$ be such that
$$\ell(r_{ext},r_{ext}-r_{int}) \leq \rho_- \leq r_{min} < r_{int} < r_{ext},$$
where $r_{min}$ is the radius defined in Lemma \ref{lemma5} and $\rho_-$ is the minimum of the interval where $\lambda_{(n-r)/n,d_r}$ is defined when $n>r$ (see Section~\ref{rotational-surfaces}), and for $n=r$ was chosen in Section \ref{estimates} to be $d_n{}^{2/n}$.
Note that if $r_{ext} \to 0$, i.e.\ $\Gamma$ shrinks to a point, then $d_r \to 0$,
and so $\rho_-$, $h^*$, and $\rho_{H_r}^*$ go to zero as well (cf.\ Lemma \ref{various-estimates}).
Hence if $r_{ext}$ is small enough, then $h^* \ll\frac{h_M}{2}$.
Further, since $\ell(r_{ext},r_{ext}-r_{int}) > 0$, we can find $\alpha > 0$ such that 
\begin{equation}\label{alpha}
\alpha r_{ext} < \ell(r_{ext},r_{ext}-r_{int}) \leq \rho_-,
\end{equation}
whence $\rho_{H_r}^*/r_{ext} > \alpha \rho_{H_r}^*/\rho_-$. Taking $\Gamma$ small enough, by Lemma \ref{various-estimates} we have
$$\frac{\rho_{H_r}^*}{r_{ext}} > \alpha \frac{\rho_{H_r}^*}{\rho_-} > 3,$$
therefore we can assume
\begin{equation}
\label{last-estimate}
\rho_{H_r}^* > 3r_{ext}.
\end{equation}

\begin{customthm}{\RomanNumeralCaps{4}}
\label{claim4}
The compact domain bounded by $M \cap (\mathbb H^n \times \{h_M - h^*\})$ contains a geodesic segment of length at least $\rho_{H_r}^*$.
\end{customthm}
\begin{proof}
Alexandrov's technique with respect to horizontal hyperplanes implies that the reflection of points in $M$ at height $h_M$ across the hyperplane $\mathbb H^n \times \{h_M/2\}$ sits in the closure of $\Omega$. 
We can assume that one of these points lies on the $t$-axis after applying a horizontal isometry.
Let $M'$ be the portion of $M$ above the hyperplane $\mathbb H^n \times \{h_M - h^*\}$. 
Then $M'$ is a graph with height $h^*$.
Suppose that for any $p \in \partial M'$ the distance between $p$ and the $t$-axis is smaller than $\rho_{H_r}^*.$
Cut $\mathcal S_r$ with a horizontal hyperplane so that the spherical cap  $S_r'$ above that hyperplane has height $h^*$.
Then translate $\mathcal S_r'$ up until it has empty intersection with $M$, then move it downwards. 
The Maximum Principle implies there is no contact point between $\mathcal S_r'$ and the interior of $M'$ at least until the boundary of $\mathcal S_r'$ reaches the level $t = h_M - h^*$.
Therefore the height of $M'$ is less than $h^*$, which is a contradiction.
\end{proof}

\begin{customthm}{\RomanNumeralCaps{5}}
\label{claim5}
The domain bounded by $M \cap (\mathbb H^n \times \{h_M - h^*\})$ contains a disk $D(R)$ with $R > \ell(\rho_{H_r}^*-r_{ext},r_{ext}).$
\end{customthm}
\begin{proof}
Up to horizontal translation, we can assume that one of the endpoints of the geodesic segment found in Claim \ref{claim4} is on the $t$-axis. Let $p$ be the other endpoint. Consider a geodesic sphere $\mathcal C_{ext}$ of $\mathbb H^n\times\{0\}$ tangent to $\Gamma$ and containing $\Gamma$.  Reflect the point $p$ across any vertical hyperplane tangent to $\mathcal C_{ext}$ in $\mathbb H^n \times \mathbb R$.

The set of such reflections is a hyperbolic lima\c{c}on $\mathcal L$ in $\mathbb H^n \times \{h_M-h^*\}$ with base point $p$ (see Figures 
\ref{fig:limacon} and \ref{fig:limacon-flat}).
By the choice of $p$, the parameters of $\mathcal L$ are $a > \rho_{H_r}^*-r_{ext}$ and $c = r_{ext}$.
By \eqref{last-estimate}, $a>c$, so $\mathcal L$ has two loops, and the smaller one is contained in $W$ -- argue as in Lemma \ref{lemma5}.
The claim now follows by Lemma \ref{limacon-lemma}.
\end{proof}

\begin{figure}
    \centering
    \begin{minipage}{0.50\textwidth}
        \centering
        \includegraphics[width=0.9\textwidth]{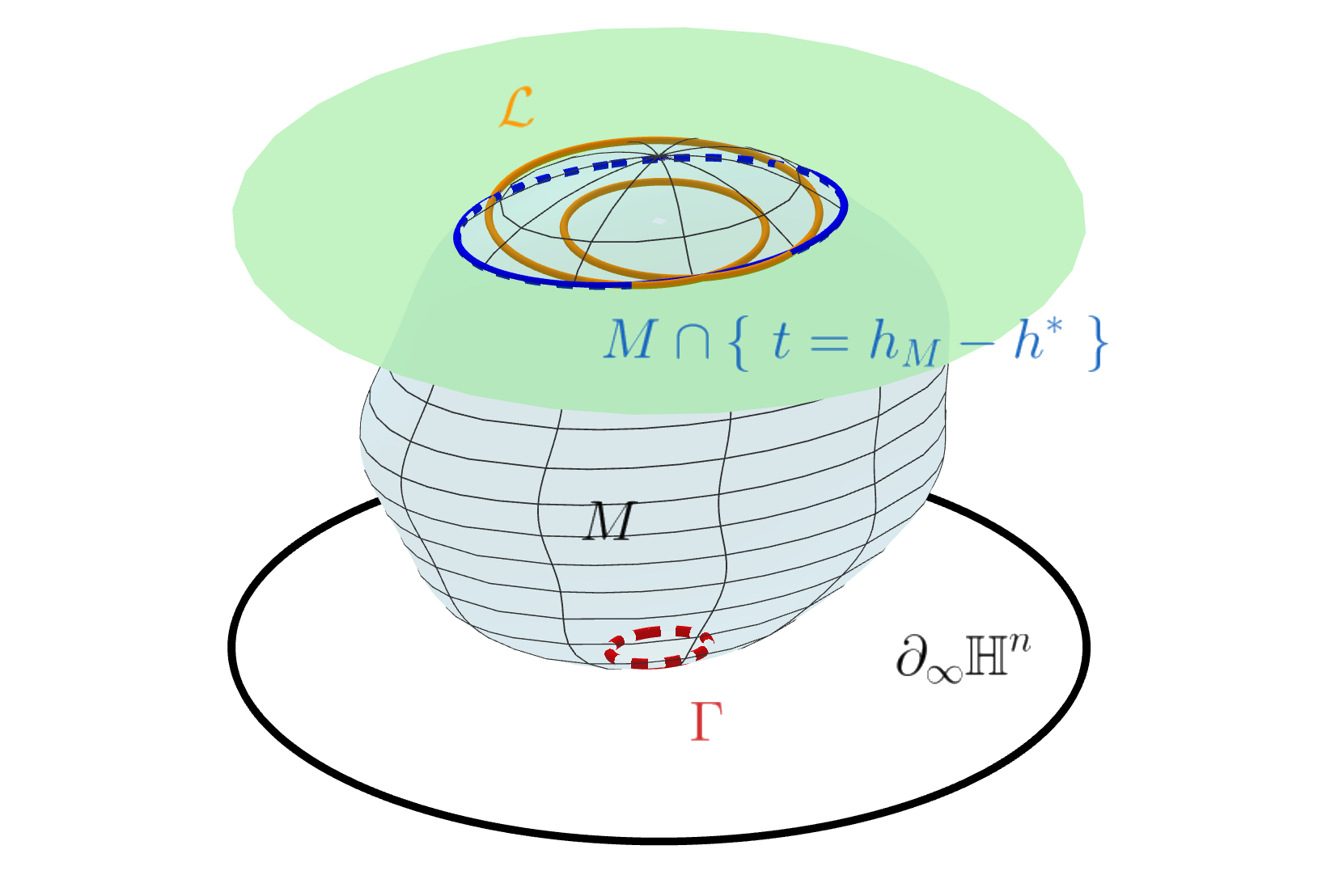} 
        \caption{ 
The orange curve is the hyperbolic lima\c{c}on.}\label{fig:limacon}
    \end{minipage}\hfill
    \begin{minipage}{0.50\textwidth}
        \centering
        \includegraphics[width=0.9\textwidth]{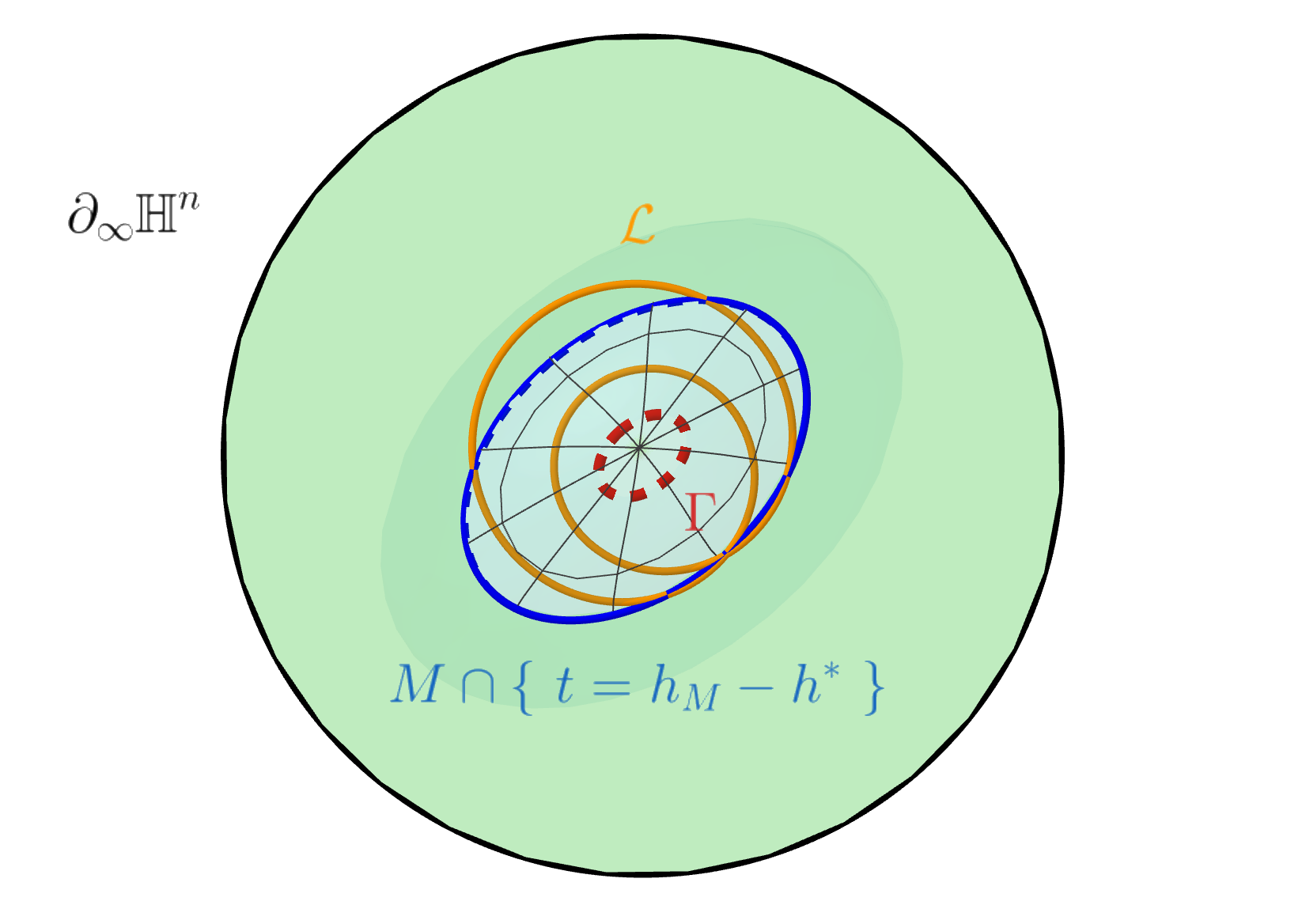} 
        \caption{The hyperbolic lima\c{c}on from above.}
        \label{fig:limacon-flat}
    \end{minipage}
    \end{figure}



\begin{customthm}{\RomanNumeralCaps{6}}
\label{claim6}
The intersection between $M$ and $D(R) \times [h^*,h_M-h^*]$ is empty.
\end{customthm}
\begin{proof}
Claim \ref{claim5} implies that $D(R)$ is contained in $W$, and since we have chosen $h^* \ll h_M$,
the hyperplane $\mathbb H^n \times \{h_M-h^*\}$ is above the hyperplane $\mathbb H^n \times \{h_M/2\}$. 
By applying the Alexandrov's reflection technique with horizontal hyperplanes, the reflection of $D(R)$ across $\mathbb H^n \times \{\tau\}$ is contained in $W$ for all $\tau \in [h_M/2,h_M-h^*]$. 
The claim then follows.
\end{proof}

\begin{customthm}{\RomanNumeralCaps{7}}
\label{claim7}
There is no point of $M$ in the cylinder $\Omega \times \{0 < t \leq h^*\}$ (see Figure \ref{fig:box}).
\end{customthm}

\begin{figure}[htbp]
\centerline{\includegraphics[scale=0.15]{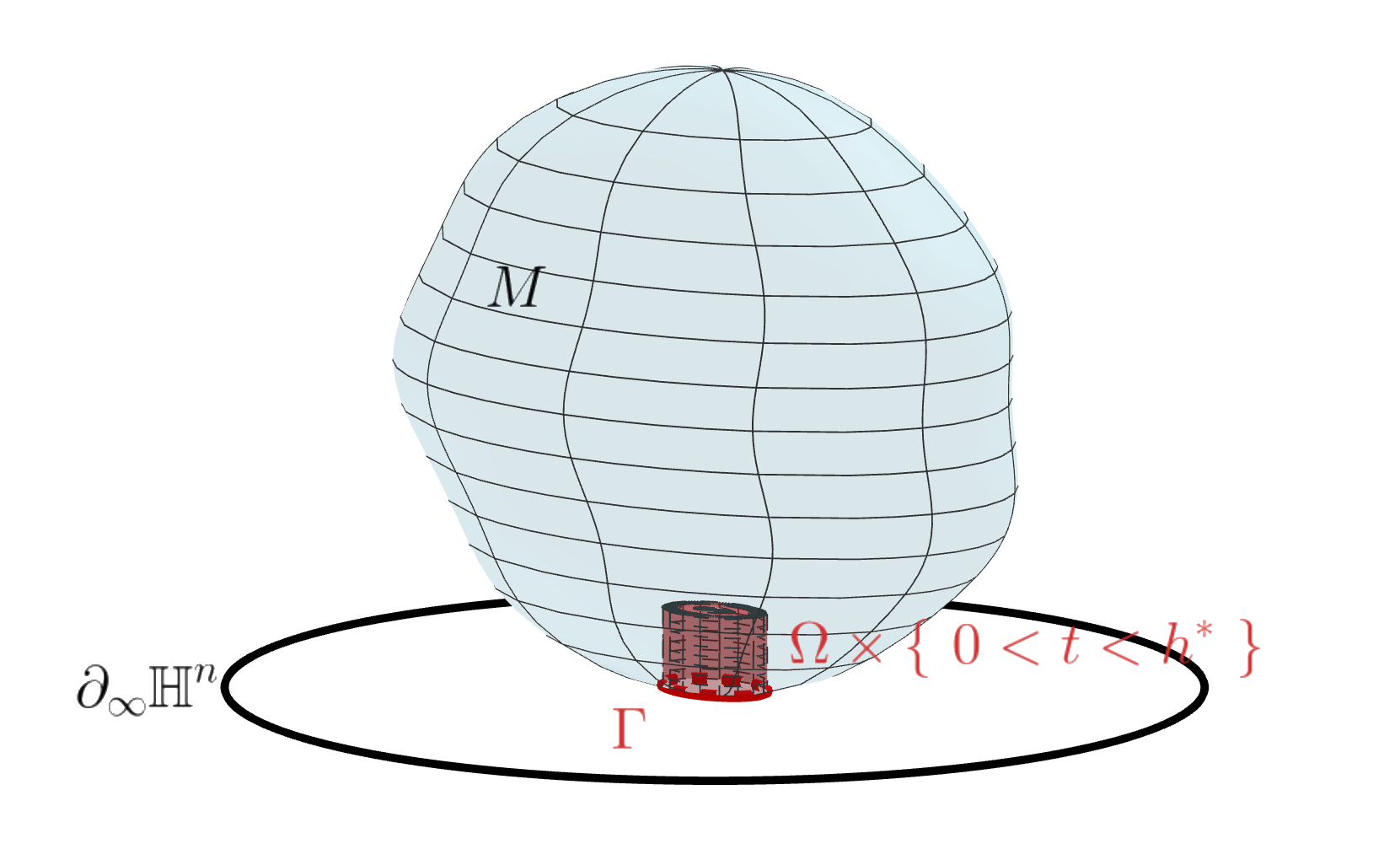}}
\caption{$M$ does not intersect the red cylinder. This final step shows that $M$ has the topology of a disk.}
\label{fig:box}
\end{figure}

\begin{proof}
If $n>r$, $\Sigma$ will denote the portion of the rotational hypersurface generated by the graph of $\lambda_{(n-r)/n,d_r}$  contained in $(D(2\rho_-)\setminus D(\rho_-)) \times \mathbb R$. 
For $n=r$, $\Sigma$ will denote the portion of a peaked sphere generated by the graph of $\lambda_{H_n,d_n}$ contained in $(D(2\rho_-)\setminus D(\rho_-)) \times \mathbb R$.
Note that if $n>r$, then the $r$-th mean curvature of $\Sigma$ is strictly smaller than that of $M$, while if $n=r$ the $n$-th mean curvatures of $M$ and $\Sigma$ coincide.

In both cases $\Gamma$ and  $d_r > 0$ are chosen small enough so that the Claims \ref{claim4}, \ref{claim5}, and \ref{claim6} hold. For any $n\geq r$, $\Sigma$ has two boundary components $C_0$ and $C_1$. Up to vertical translation, we can assume $C_0 \subset \mathbb H^n \times \{0\}$ and $C_1 \subset \mathbb H^n \times \{h^*\}$. Up to horizontal translation we can assume that the center of $C_0$ coincides with the center of the disk of Lemma \ref{lemma5}. Moreover, by definition of $h^*$, the radius of $C_1$ is smaller than $2\rho_-$.

Let $R$ be the radius found in Claim \ref{claim5}. By Claim \ref{claim5}, \eqref{last-estimate} and Lemma \ref{limacon-rmk} we get
$$
R>\ell(\rho^*_{H_r}-r_{ext},r_{ext})>\ell\left(\frac {2\rho^*_{H_r}}{3},\frac{\rho^*_{H_r}}{3}\right)>\frac{\rho^*_{H_r}}{6}.
$$
By Lemma \ref{various-estimates}, we can take $\Gamma$ small enough such that
$$
\frac{\rho^*_{H_r}}{6}>\left(\frac{1}{\alpha}+1\right)\rho_-,
$$ 
where $\alpha$ is the constant in \eqref{alpha}. It follows that if $\Gamma$ is small enough, then
\begin{equation}
\label{stimaR}
R > r_{ext}+\rho_->2\rho_-.
\end{equation}
Claim \ref{claim5} and \eqref{stimaR} allow us to translate $\Sigma$ vertically in such a way that it is contained in $W$.
By  Lemma \ref{lemma5} and the Maximum Principle, we can then translate $\Sigma$ down until $C_0$ reaches $\mathbb H^n \times \{0\}$ without having contact points with the interior of $M$.
Because of $\rho_-<r_{int}$, we can translate horizontally $\Sigma$ in such a way that it touches every point of $\Gamma$ with $C_0$ and keeping $C_0$ inside $\Omega$.

Since \eqref{stimaR} holds true, during this translation $C_1$ remains inside the disk $D^*(R) \subset \mathbb H^n \times \{h^*\}$, which is the reflection of $D(R) \subset \mathbb H^n \times \{h_M-h^*\}$.
By Claim \ref{claim6}, in this process, the upper boundary of $\Sigma$ does not touch $M$. Recalling that the $r$-th mean curvature of $\Sigma$ is not bigger than that of $M$, by the Maximum Principle, we get that there can be no internal contact point between $M$ and $\Sigma$.
The claim then follows because $\Sigma$ is a graph over the exterior of $D(\rho_-)$.
\end{proof}
The proof of Theorem \ref{main-thm} is now complete.
\end{proof}

\section*{List of notations}
\label{appendix}

We include a summary of the various notations we use throughout for the most notable objects and quantities.
\vspace{\topsep}
\begin{enumerate}
\itemsep0.5em 
\item Profile curves:
	\vspace{\topsep}
	\begin{enumerate}
	\itemsep0.2em 
	\item[$\lambda_{H_r,d_r}$:] function defining the profile curve of $H_r$-hypersurfaces in $\mathbb H^n \times \mathbb R$ invariant under rotation depending on a real parameter $d_r$ (Section \ref{rotational-surfaces}).
	\item[$\mu_{H_r,\epsilon}$:] function defining the profile curve of $H_r$-hypersurfaces in $\mathbb H^n \times \mathbb R$ invariant under hyperbolic translation depending on a real parameter $\epsilon > 0$ (Section \ref{translation-surfaces}).
	\end{enumerate}
\item Domain of profile curves:
	\vspace{\topsep}
	\begin{enumerate}
	\itemsep0.2em 
	\item[$\rho_-$:] minimum of the domain of $\lambda_{H_r,d_r}$ when this is not zero.
	\item[$\rho_+$:] maximum of the domain of $\lambda_{H_r,d_r}$.
	\item[$\rho_0$:] minimum point of $\lambda_{H_r,d_r}$ in $(\rho_-,\rho_+)$.
	\item[$\rho_+^{\epsilon}$:] maximum of the domain of $\mu_{H_r,\epsilon}$ for $r > 1$.
	\end{enumerate}
\item Hypersurfaces in $\mathbb H^n \times \mathbb R$:
	\vspace{\topsep}
	\begin{enumerate}
	\itemsep0.2em 
	\item[$\mathcal S_r$:] rotation $H_r$-hypersurface generated by the graph of $\lambda_{H_r,0}$, for some $H_r > (n-r)/n$.
	\item[$\mathcal C_{r,\epsilon}$:] translation $H_r$-hypersurface with $H_r > (n-r)/n$ generated by the graph of $\mu_{H_r,\epsilon}$.
		\end{enumerate}
\item Special quantities:
	\vspace{\topsep}
	\begin{enumerate}
	\itemsep0.2em 
	\item[$R_{\mathcal S_r}$:] the value $\rho_+$ for $\lambda_{H_r,0}$.
	\item[$R_{\mathcal C_{r,\epsilon}}$:] the value $\rho_+^{\epsilon}-\epsilon$ for $r > 1$.
	\item[$h^*$:] approximated value of $\lambda_{H_r,d_r}(2\rho_-)$ for $d_r > 0$ and $H_r = (n-r)/n$ \eqref{h-star}.
	\item[$\rho_{H_r}^*$:] radius of the hypersurface given by the graph of $\lambda_{H_r,0}$, $H_r > (n-r)/n$, at height $h^*$ \eqref{rho*}.
	\end{enumerate}
\item Specific notations for Section \ref{main-result}:
	\vspace{\topsep}
	\begin{enumerate}
	\itemsep0.2em 
	\item[$\mathcal C_r$:] same as $\mathcal C_{r,\epsilon}$ with a choice of $\epsilon$ such that $R_{\mathcal C_{r,\epsilon}} < R_{\mathcal S_r}$.
	\item[$h_{\mathcal C_r}$:] height of $\mathcal C_r$, namely $2\mu_H(\rho_+)$ for $r=1$ and $2\mu_{H_r,\epsilon}(\rho_+^{\epsilon})$ for $r > 1$, cf.~Theorem \ref{str-thm-cyl2}.
	\item[$h_M$:] height of $M \subset \mathbb H^n \times [0,\infty)$ with respect to the  slice $\mathbb H^n \times \{0\}$.
	\item[$\mathcal L$:] the hyperbolic lima\c{c}on as in Definition \ref{limacon-def}.
	\item[$\ell(a,c)$:] optimal radius of a ball bounded by the smaller loop of $\mathcal L$ with parameters $a>c$, see Lemma \ref{limacon-lemma} and identity \eqref{improvement-distance} for its explicit definition.
	\item[$r_{int}$:] interior radius of $\Gamma$.
	\item[$r_{ext}$:] exterior radius of $\Gamma$.
	\item[$r_{min}$:] the largest radius of a ball bounded by the smaller loop of $\mathcal L$ over which $M$ is a graph, see Lemmas \ref{limacon-lemma} and \ref{lemma5}. \\
	\end{enumerate}
\end{enumerate}

\indent All authors declare no conflict of interest. The authors are partially supported by INdAM-GNSAGA. The authors would like to thank the anonymous referees for the careful reading and the valuable suggestions. \\


\begin{thebibliography}{10}
\normalsize

\bibitem{alexandrov}
A.\ D.\ Alexandrov, \emph{Uniqueness theorems for surfaces in the large I--V}, Vestnik Leningrad Univ.\ 11 (1956), no.\ 19, 5--17; 12 (1957), no.\ 7, 15--44; 13 (1958), no.\ 7, 14--26; 13 (1958), no.\ 13, 27--34; 13 (1958), no.\ 19, 5--8; English transl.\ in Amer.\ Math.\ Soc.\ Transl.\ 21 (1962), 341--354, 354--388, 389--403, 403--411, 412--416.

\bibitem{alias}
L.\ J.\ Al\'ias, J.\ H.\ S.\ de Lira, and J.\ M.\ Malacarne, \emph{Constant higher-order mean curvature hypersurfaces in Riemannian spaces}, J.\ Inst. Math.\ Jussieu 5 (2006), no.\ 4, 527--562.

\bibitem{barbosa}
J.\ L.\ M.\ Barbosa and A.\ G.\ Colares, \emph{Stability of hypersurfaces with constant $r$-mean curvature}, Ann.\ Global Anal.\ Geom. 15 (1997), no.\ 3, 277--297.

\bibitem{berard-earp}
P.\ B\'erard and R.\ Sa Earp, \emph{Examples of $H$-hypersurfaces in $\mathbb H^n \times \mathbb R$ and geometric applications}, Mat.\ Contemp.\ 34 (2008), 19--51.

\bibitem{caffarelli}
L.\ Caffarelli, L.\ Nirenberg, and J.\ Spruck, \emph{Nonlinear second-order elliptic equations. V. The Dirichlet problem for Weingarten hypersurfaces}, Comm.\ Pure Appl.\ Math.\ 41 (1988), no.\ 1, 47--70.

\bibitem{cheng-rosenberg}
X.\ Cheng and H.\ Rosenberg, \emph{Embedded positive constant $r$-mean curvature hypersurfaces in $M^m \times \mathbb R$}, An.\ Acad.\ Bras.\ de Ci\^{e}ncias 77 (2005), no.\ 2, 183--199.

\bibitem{elbert-nelli2}
M.\ F.\ Elbert and B.\ Nelli, \emph{On the stability for constant higher order mean curvature hypersurfaces in a Riemannian manifold}. doi:10.48550/arXiv.1912.12103.

\bibitem{elbert-nelli1}
M.\ F.\ Elbert and B.\ Nelli, \emph{On the structure of hypersurfaces in $\mathbb H^n \times \mathbb R$ with finite strong total curvature}, Bull.\ Lond.\ Math.\ Soc.\ 51 (2019), no.\ 1, 89--106.

\bibitem{elbert-nelli-santos}
M.\ F.\ Elbert, B.\ Nelli, and W.\ Santos, \emph{Hypersurfaces with $H_{r+1}=0$ in $\mathbb H^n \times \mathbb R$}, Manuscripta Math.\ 149 (2016), no.\ 3--4, 507--521.

\bibitem{elb-earp}
M.\ F.\ Elbert and R.\ Sa Earp, \emph{Constructions of $H_r$-hypersurfaces, barriers, and Alexandrov theorem in $\mathbb H^n \times \mathbb R$}, Ann.\ Mat.\ Pura Appl.\ (4) 194 (2015), no.\ 6, 1809--1834.

\bibitem{fontenele-silva}
F.\ Fontenele and S.\ L.\ Silva, \emph{A tangency principle and applications}, Illinois J.\ Math.\  45 (2001), no.\ 1, 213--228.

\bibitem{hounie}
J.\ Hounie and M.\ L.\ Leite, \emph{The maximum principle for hypersurfaces with vanishing curvature functions}, J.\ Differential Geom.\ 41 (1995), no.\ 2, 247--258.

\bibitem{hsiang}
W.\ T.\ Hsiang and W.\ Y.\ Hsiang, \emph{On the uniqueness of isoperimetric solutions and imbedded soap bubbles in noncompact symmetric spaces I}, Invent.\ Math.\ 98 (1989), no.\ 1, 39--58.

\bibitem{leite}
M.\ L.\ Leite, \emph{Rotational hypersurfaces of space forms with constant scalar curvature}, Manuscripta Math.\ 67 (1990), no.\ 3, 285--304.

\bibitem{lima-pipoli}
R.\ F.\ de Lima and G.\ Pipoli, \emph{Translators to Higher Order Mean Curvature Flows in $\mathbb R^n \times \mathbb R$ and $\mathbb H^n \times \mathbb R$}. 
doi:10.48550/arXiv.2211.03918.

\bibitem{lima}
R.\ F.\ de Lima, F.\ Manfio, and J.\ P.\ dos Santos, \emph{Hypersurfaces of constant higher-order mean curvature in $M \times \mathbb R$}, Ann.\ Mat.\ Pura Appl.\ (4) 201 (2022), no.\ 6, 2979--3028.

\bibitem{manzano}
J.\ M.\ Manzano, \emph{Estimates for constant mean curvature graphs in $M \times \mathbb R$}, Rev.\ Mat.\ Iberoam.\ 29 (2013), no.\ 4, 1263--1281.

\bibitem{mori}
H.\ Mori, \emph{Rotational hypersurfaces in $\mathbb S^n$ and $\mathbb H^n$ with constant scalar curvature}, Yokohama Math.\ J.\ 39 (1992), no.\ 2, 151--162.

\bibitem{nelli-pipoli}
B.\ Nelli and G.\ Pipoli, \emph{Constant mean curvature hypersurfaces in $\mathbb H^n\times \mathbb R$ with small planar boundary}, Rev.\ Mat.\ Iberoam.\ 39 (2023), no.\ 4, 1387--1404.

\bibitem{nelli-rosenberg}
B.\ Nelli and H.\ Rosenberg, \emph{Some remarks on positive scalar and Gauss--Kronecker curvature hypersurfaces of $\mathbb R^{n+1}$ and $\mathbb H^{n+1}$}, Ann.\ Inst.\ Fourier (Grenoble) 47 (1997), no.\ 4, 1209--1218.

\bibitem{nelli-semmler}
B.\ Nelli and B.\ Semmler, \emph{On hypersurfaces embedded in Euclidean space with positive constant $H_r$ curvature}, Indiana Univ.\ Math.\ J.\ 50 (2001), no.\ 2, 989--1002.

\bibitem{nelli-zhu}
B.\ Nelli and J.\ Zhu, \emph{Uniqueness of Hypersurfaces of Constant Higher Order Mean Curvature in Hyperbolic Space}. doi:10.48550/arXiv.2008.11018. To be published in Comm.\ An.\ Geom.

\bibitem{oscar}
O.\ Palmas, \emph{Complete rotation hypersurfaces with $H_k$ constant in space forms}, Bol.\ Soc.\ Brasil.\ Mat.\ 30 (1999), no.\ 2, 139--161.

\bibitem{reilly}
R.\ Reilly, \emph{Variational properties of functions of the mean curvatures for hypersurfaces in space forms}, J.\ Differential geometry 8 (1973), no.\ 3, 465--477. 

\bibitem{ros}
A.\ Ros, \emph{Compact hypersurfaces with constant scalar curvature and a congruence theorem. With an appendix by Nicholas J.\ Korevaar}, J.\ Differential Geometry 27 (1988), no.\ 2, 215--223.

\bibitem{ros-rosenberg}
A.\ Ros and H.\ Rosenberg, \emph{Constant mean curvature surfaces in a half-space of $\mathbb R^3$ with boundary in the boundary of the half-space}, J.\ Differential geometry 44 (1996), no.\ 4, 807--817.

\bibitem{rosenberg}
H.\ Rosenberg, \emph{Hypersurfaces of constant curvature in space forms}, Bull.\ Sci.\ Math.\ 117 (1993), no.\ 2, 211--239.

\bibitem{rosenberg-spruck}
H.\ Rosenberg and J.\ Spruck, \emph{On the existence of convex hypersurfaces of constant Gauss curvature in hyperbolic space}, J.\ Differential Geom.\ 40 (1994), no.\ 2, 379--409.

\bibitem{semmler}
B.\ Semmler, \emph{The topology of large $H$-surfaces bounded by a convex curve}, Ann.\ Sci.\ \'Ecole Norm.\ Sup.\ (4) 33 (2000), no.\ 3, 345--359.

\end{thebibliography}
\end{document}